\newcommand{\diff}{a}
\newcommand{\bform}{B}
\newcommand{\mesh}{{\mathcal T}}
\newcommand{\be}{\begin{equation}}
\newcommand{\ee}{\end{equation}}
\newcommand{\bea}{\begin{eqnarray}}
\newcommand{\eea}{\end{eqnarray}}
\newcommand{\nno}{\nonumber}
\newcommand{\mbf}[1]{\mbox{\boldmath$\rm{#1}$}}
\newcommand{\ddd}{\text{\rm D}}
\newcommand{\dn}{\text{\rm N}}
\newcommand{\el}{ \k \in \mathcal{T}}
\newcommand{\diam}{\operatorname{diam}}
\newcommand{\dint}{\text{\rm int}}
\newcommand{\fes}{S_{\mathcal{T}}}
\newcommand{\fesp}{S^{p}_{\mathcal{T}}}
\newcommand{\feso}{S^{{\bf 0}}_{\mathcal{T}}}
\newcommand{\trineq}{C_{{\rm tr}}}
\newcommand{\tinv}{C_{{\rm inv}}}
\newcommand{\tinvT}{\widehat{C}_{{\rm inv}}}
\newcommand{\einv}{C_{{\rm inv,}\k}}
\newcommand{\einvT}{C_{{\rm inv,}{T}}}
\newcommand{\Csh}{\tau}
\newcommand{\CshT}{\widehat{\Csh}}
\newcommand{\Csz}{C_{\rm sz}}
\newcommand{\Cin}{C_{\rm in}}
\newcommand{\Cco}{C_{\rm co}}
\newcommand{\Cnc}{C_{\rm nc}}
\newcommand{\tnabla}{\nabla_{\hspace{-.5mm} {\rm T}}}
\newcommand{\amesh}{\widehat{{\mathcal{T}}}}
\newcommand{\mean}[1]{\{\!\!\{#1\}\!\!\}}                
\newcommand{\jump}[1]{[\![#1]\!]}                        
\newcommand{\uu}[1]{\mathbf{#1}}
\renewcommand{\k}{K}
\newcommand{\su}{\sum_{\el}}
\newcommand{\ud}{\,\mathrm{d}}
\newcommand{\ndg}[1]{| \kern -.25mm \|{#1}| \kern -.25mm \|}
\newcommand{\ncdg}[1]{| \kern -.25mm \|{#1}| \kern -.25mm \|_{\rm DG}}
\newcommand{\nsdg}[1]{| \kern -.25mm \|{#1}| \kern -.25mm \|_{\rm s}}
\newcommand{\nstdg}[1]{| \kern -.25mm \|{#1}| \kern -.25mm \|_{L_2 (J; \mathcal{D} )}}
\newcommand{\ltwo}[2]{\|{#1}\|_{{#2}}}
\newcommand{\linf}[2]{\|{#1}\|_{L_{\infty}({#2})}}
\newcommand{\no}{[\kern -.8mm [}
\newcommand{\nc}{]\kern -.8mm ]}
\newcommand{\norm}[2]{\| {#1} \|_{#2}}
\crefname{hypothesis}{Hypothesis}{Hypotheses}
\newtheorem{assumption}[theorem]{Assumption}
\title{\emph{A posteriori} error estimates \\ for  \\
	 discontinuous Galerkin methods\\
	  on polygonal and polyhedral  meshes
}
\author{ANDREA CANGIANI\thanks{SISSA, International School for Advanced Studies, via Bonomea 265, I-34136 Trieste, Italy (\email{andrea.cangiani@sissa.it}).}
		\and ZHAONAN DONG\thanks{1) Inria, 2 rue Simone Iff, 75589 Paris, France,
				2) CERMICS, Ecole des Ponts, 77455 Marne-la-Vall\'{e}e, France (\email{zhaonan.dong@inria.fr}) }
				\and Emmanuil H. Georgoulis\thanks{1) The Maxwell Institute for Mathematical Sciences and Department of Mathematics, School of Mathematical and Computer Sciences, Heriot-Watt University,   Edinburgh EH14 4AS, United Kingdom (\email{E.Georgoulis@hw.ac.uk}), 2) Department of Mathematics, School of Applied Mathematical and Physical Sciences, National Technical University of Athens, Zografou 15780, Greece \& 3) IACM-FORTH, Greece.}
}
\begin{document}

\maketitle

\begin{abstract}
	We present a new residual-type energy-norm \emph{a posteriori} error analysis for interior penalty discontinuous Galerkin (dG) methods for linear elliptic problems. The new error bounds are also applicable to dG methods on meshes consisting of elements with very general polygonal/polyhedral shapes.  The case of simplicial and/or box-type elements is included in the analysis as a special case. In particular, for the upper bounds, arbitrary number of very small faces are allowed on each polygonal/polyhedral element, as long as certain mild shape regularity assumptions are satisfied. As a corollary, the present analysis generalizes known \emph{a posteriori} error bounds for dG methods, allowing in particular for meshes with arbitrary number of irregular hanging nodes per element. The proof hinges on a new conforming recovery strategy in conjunction with a Helmholtz decomposition. The resulting \emph{a posteriori} error bound involves jumps on the tangential derivatives along elemental faces. Local lower bounds are also proven for a number of practical cases.
	Numerical experiments are also presented, highlighting the practical value of the derived \emph{a posteriori} error bounds as error estimators.
\end{abstract}

\begin{keywords}
	discontinuous Galerkin; \emph{a posteriori} error bound;  polygonal/polyhedral meshes; polytopic elements; irregular hanging nodes.
\end{keywords}

\begin{AMS}
	65N30, 65M60, 65J10
\end{AMS}

\section{Introduction}
Recent years have witnessed an extensive activity in the development of various Galerkin methods posed on meshes consisting of general polygonal/polyhedral (henceforth, collectively referred to as \emph{polytopic}) elements. A central question arising is the derivation of computable error bounds for such discretisations, so that the extreme geometric flexibility of such meshes can be harnessed.

Residual-type \emph{a posteriori} error bounds for interior penalty dG methods on composite/polytopic meshes appeared in \cite{giani_houston,cui}. Also, in the context of virtual element methods, corresponding bounds are proven in \cite{vem_apost,Carsten22}, while for the weak Galerkin approach, an \emph{a posteriori} error analysis can be found in \cite{weakgal}. In addition, corresponding results for the hybrid high-order method can be found in \cite{HHO_a_posterior}. All aforementioned results are proven for shape-regular polytopic meshes, under the additional assumption that the diameters of elemental faces are of comparable size to the element diameter. Although the latter may be a reasonable assumption in the context of standard, simplicial meshes, it can be rather restrictive for general polytopic elements. This is because general polygons/polyhedra with more than $d$ faces can be simultaneously shape-regular yet containing \emph{small} faces, i.e., faces whose diameter is arbitrarily small compared to the element diameter.

This work aims exactly at rectifying this restrictive state of affairs. We prove new energy-norm \emph{a posteriori} upper error bounds for interior penalty discontinuous Galerkin (dG) methods posed on meshes containing polytopic elements, including with arbitrary number of small faces, as long as certain mild shape-regularity assumptions are satisfied.
The case of simplicial and/or box-type elements is included in the analysis as a special case. For accessibility, we restrict the discussion to a model elliptic problem, noting, nevertheless, that various generalizations are possible with minor modifications.

As a general principle, residual-based \emph{a posteriori} error analysis of non-conforming and, in particular, dG methods requires a recovery of the numerical solution into a related conforming function. The pioneering work of Karakashian \& Pascal \cite{KP} (see also \cite{KP_conv}) proposed the recovery of the dG solution by a nodal averaging operator for which a crucial stability result was proven \cite[Theorem 2.2]{KP}; cf.,~also \cite[Theorem 2.1]{KP_conv} for an extension. This construction allowed for the first rigorous \emph{a posteriori} error analysis of a dG method for elliptic problems.  A number of related results followed, improving various aspects of the theory; for instance,  see~\cite{ains1,hp-DG,max, MR2502931, ains2,bonito_nochetto,MR2475951,De_Ge, Kr_Ge}. A key reason for the aforementioned restrictive assumption that all elemental faces are of comparable size to the element diameter in existing \emph{a posteriori} error analysis for polytopic dG methods \cite{giani_houston,cui} is exactly the lack fo availability of a  stability result corresponding to \cite[Theorem 2.2]{KP} for polytopic element meshes containing elements with small faces.

In this work, we crucially avoid the use of averaging operators. Instead, the proof of the upper error bound hinges on a new recovery into $H^1$-conforming functions, in conjunction with a Helmholtz decomposition. To complete the analysis, we also require the existence of appropriate auxiliary simplicial meshes on which quasi-interpolants are defined. This can be verified in practice using simple and efficient algorithms. We provide two such algorithms, one  based on a sub-mesh and one employing tools from computational geometry and, in particular, constrained Delaunay triangulations~\cite{Chew89,Shew98}.
The resulting \emph{a posteriori} error bound involves also jumps on the tangential derivatives along elemental faces.

Local lower bounds are also proven for a number of practical cases, indicating the optimality of the new estimators. The key challenge in the proof of the latter is, again, the treatment of small/degenerating element faces and the construction of respective bubble functions. In particular, local lower bounds for the element residuals are proven allowing for arbitrarily small faces. Lower bounds for the flux residuals are proven under more restrictive assumptions (see Assumption \ref{A3} below), allowing, nevertheless, for arbitrarily small faces.

We note that the case of meshes consisting of simplicial and/or box-type elements is included in the analysis as a special case. For such `classical' mesh concepts, the developments presented below provide a new class of \emph{a posteriori} error bounds applicable even to $hp$-version dG methods, allowing in particular for meshes with arbitrary number of irregular hanging nodes per element.

The remainder of this work is structured as follows. In Section \ref{mod}, we define the elliptic model problem, the admissible meshes, and finite element spaces and the interior penalty dG method on polytopic meshes. We also prove some important technical results regarding the construction of auxiliary meshes which will be instrumental in the proof of a posteriori error bounds. In Section \ref{apost_sec}, we prove the \emph{a posteriori} upper error bound, using the aforementioned technical developments, while in Section \ref{lower_bounds} we provide respective lower bounds for the energy-norm error for a number of practical cases. Finally, in Section \ref{num_ex}, we present some numerical experiments confirming the robustness and efficiency of the of the derived \emph{a posteriori} error bound and highlighting its practical value  as an error estimator.

\section{Model problem and numerical method}\label{mod}

For a Lipschitz domain $\omega \subset {\mathbb R}^d$, $d=1,2,3$,  we denote by $H^s(\omega)$ the Hilbertian Sobolev space of  index $s\ge 0$ of real--valued functions defined on
$\omega$, endowed with the seminorm $|\cdot |_{H^s(\omega)}$ and norm $\|\cdot\|_{H^s(\omega)}$.   Furthermore, we let~$L_p(\omega)$, $p\in[1,\infty]$,  be the standard Lebesgue space on $\omega$, equipped with the norm~$\|\cdot\|_{L_p(\omega)}$. In the case $p=2$, we shall simply write $\|\cdot\|_{\omega}$  to denote the $L_2$-norm over $\omega$ and simplify this further to $\|\cdot\|$ when $\omega=\Omega$, the physical domain.   Finally,  $|\omega|$ denotes the $d$-dimensional Hausdorff measure of $\omega$.

\subsection{Model problem}\label{Parabolic problem}
Let $\Omega$ be a bounded, simply connected, and open polygonal/polyhedral domain in $\mathbb{R}^d$, $d=2,3$. The boundary $\partial\Omega$ of $\Omega$ is split into two disjoint parts $\Gamma_{\ddd}$ and $\Gamma_{\dn}$ with $|\Gamma_{\ddd}|\neq 0$.
For technical reasons, when $d=3$ and $|\Gamma_{\dn}|\neq 0$,  the interface between $\Gamma_{\ddd}$ and $\Gamma_{\dn}$ is  assumed that is made up of straight planar segments.
 We consider the linear elliptic problem: find $u \in H^1(\Omega)$, such that
\begin{equation}\label{Problem}
\begin{aligned}
- \nabla \cdot (\diff \nabla u) =&\ f   &\text{in }   \Omega , \\
\quad u =&\ g_{\ddd}   &\text{on } \Gamma_{\ddd}, \\
\quad a\nabla u\cdot \mbf{n} =&\ g_{\dn}   &\text{on }  \Gamma_{\dn},
\end{aligned}
\end{equation}
with known $f\in L_2(\Omega)$, $g_{\ddd}\in H^{1/2}(\Gamma_{\ddd})$ and $g_{\dn}\in L_2(\Gamma_{\dn})$ and symmetric diffusion tensor $\diff\in [L_\infty (\Omega)]^{d\times d}$ such that
\begin{equation}\label{uniform ellipticity}
\alpha^* |\xi|^2\geq  \xi^\top \diff(x) \xi \geq \alpha_* |\xi|^2 >0 \quad  \forall \xi \in  \mathbb{R}^d, \quad \text{a.e.} \quad  x\in   \Omega,
\end{equation}
for some constants $\alpha^*,\alpha_*>0$.  For simplicity of the presentation, we assume that $a$ is piecewise constant, although this is not an essential restriction for the validity of the developments below.

Setting $H^1_\ddd:= \{v\in H^1(\Omega): v = 0  \text{ on } \Gamma_{\ddd}  \}$, 
the weak formulation of \eqref{Problem} is: find $u\in H^1(\Omega)$, $ u = g_{\ddd}$ on $\Gamma_{\ddd}$ such that
\begin{equation}\label{eq:prob}
\int_\Omega \diff{\nabla u}\cdot {\nabla v}\ud \uu{x}  =  \int_\Omega f  v \ud \uu{x} +\int_
{\Gamma_{\dn}}  g_{\dn} v \ud s,
\end{equation}
for all $v\in H^1_{\ddd}(\Omega)$.  The well-posedness is guaranteed by the Lax-Milgram Lemma.

\subsection{Finite element spaces and trace operators}
We consider meshes $\mesh$ consisting of general polygonal (for $d=2$) or polyhedral (for $d=3$) mutually disjoint open elements $\k\in\mesh$, henceforth termed collectively as \emph{polytopic}, with $\cup_{\el}\bar{\k} =\bar{\Omega}$. Given $h_{\k}:=\diam(\k)$, the diameter of  $\el$, we define the mesh-function $\mbf{h}:\cup_{\el}\k\to\mathbb{R}_+$ by $\mbf{h}|_{\k}=h_{\k}$, $\el$.
Further, we let $\Gamma:=\cup_{\el}\partial\k$ denote the mesh skeleton and set $\Gamma_{\dint}:=\Gamma\backslash\partial\Omega$. The mesh skeleton $\Gamma$ is decomposed into $(d-1)$--dimensional simplices $F$ denoting the mesh \emph{faces}, shared by at most two elements. These are distinct from elemental \emph{interfaces}, which are defined as the simply-connected components of the intersection between the boundary of an element and either a neighbouring element or $\partial\Omega$.
As such,  an interface between two elements may consist of more than one face, separated by hanging nodes/edges shared by those two elements only. This includes both `classical' hanging nodes, typically created by local mesh refinement, and non-standard ones separating non-co-planar faces. The latter may be created, for instance,  by a mesh agglomeration procedure; we refer to Fig.~\ref{fig_hanging_nodes} for an illustration for $d=2$.
\begin{figure}[t]
	\centering
	\includegraphics[scale=.25]{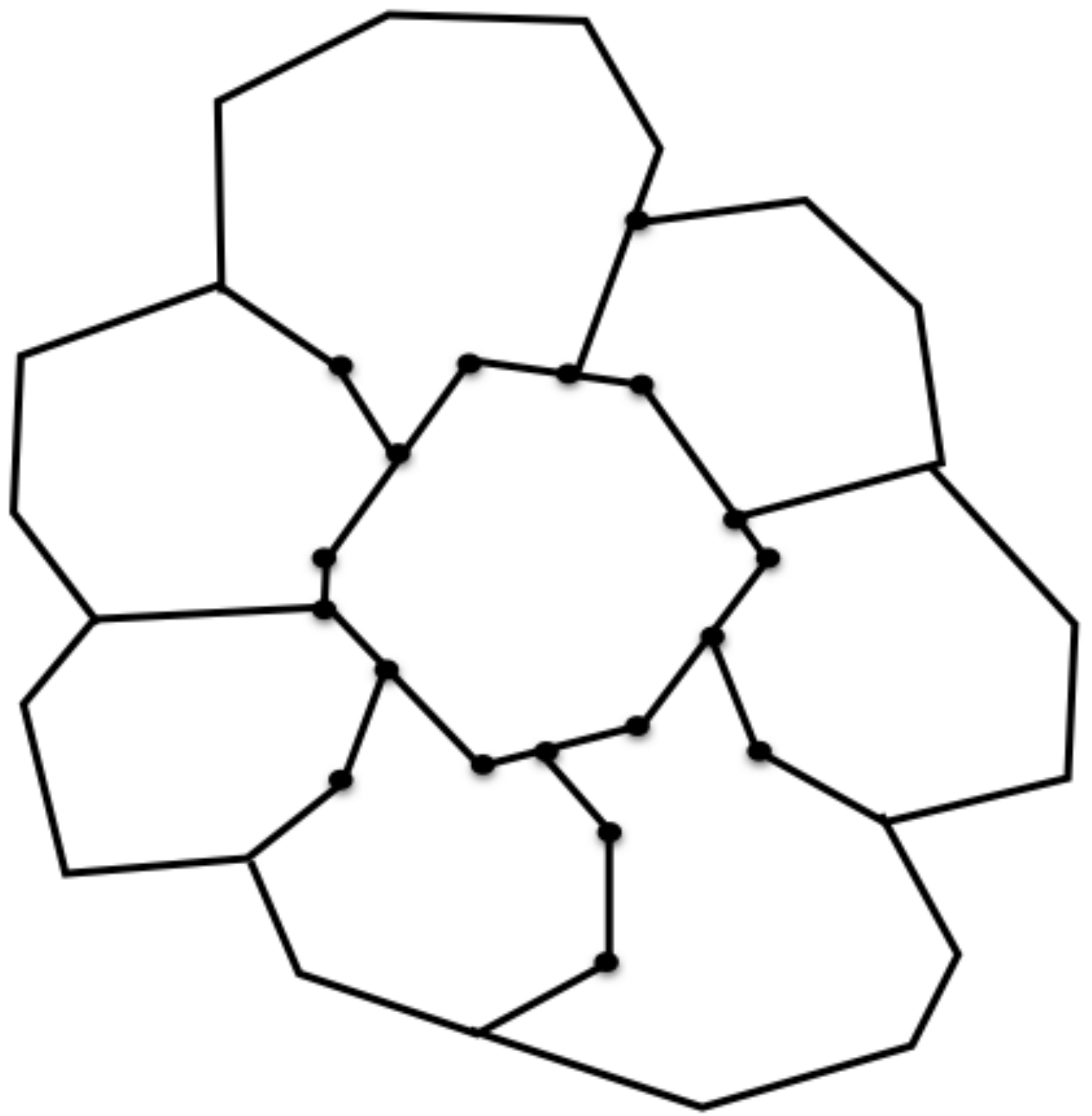}
	\caption{Element $\el$ with its face-wise neighbours;
		hanging nodes are highlighted by a bullet.}
	\begin{picture}(0,0)
	\put(-5,90){$\k$}
	\end{picture}
	\label{fig_hanging_nodes}
\end{figure}

The \emph{finite element space} $\fes$ with
respect to $\mesh$ is defined by
\[
\fes\equiv \fesp :=\{u\in L_2(\Omega)
:u|_{\k}\in\mathcal{P}_{p}(\k),\el\},
\]
for some $p\in\mathbb{N}$
with
$\mathcal{P}_{p}(\k)$ denoting the space of $d$-variate polynomials of total degree up to $p$ on $\k$. We stress that the local elemental polynomial spaces employed within $\fes$ are defined in the \emph{physical coordinate system}, i.e., without mapping from a given reference or canonical frame. This approach allows to retain the full local approximation properties of the underlying finite element space. We refer to \cite{DGpoly1,DGpolybook} for a detailed discussion on the benefits and implementation issues resulting from this choice.

Let $\k_i$ and $\k_j$ be two
adjacent elements of $\mathcal{T}$ sharing a face $F\subset\partial \k_i \cap \partial \k_j \subset \Gamma_{\dint}$.
For $v$ and $\mbf{q}$ element-wise continuous scalar- and vector-valued functions, respectively,  we define the \emph{average} across $F$ by $
\mean{v}|_F:=\frac{1}{2}(v|_{F\cap \k_i}+v|_{F\cap \k_j})$, $\mean{\mbf{q}}|_F:=\frac{1}{2}(\mbf{q}|_{F\cap \k_i}+\mbf{q}|_{F\cap \k_j})$,
respectively, and the \emph{jump} across $F$ by
$
\jump{v} :=v|_{F\cap \k_i} -v|_{F\cap \k_j}$, $\jump{\mbf{q}} := \mbf{q}|_{F\cap \k_i} -\mbf{q}|_{F\cap \k_j}$,
using the convention $i>j$ in the element numbering to determine the sign.
On a boundary face $F\subset  \Gamma_{\ddd}$, with $F \subset \partial \k_i$, $\k_i\in \mathcal{T}$,
we set
$
\mean{v}:=v_i, $  $ \mean{\mbf{q}}:=\mbf{q}_i,  $
$\jump{v} :=v_i ,$ and $ \jump{\mbf{q}} := \mbf{q}_i$, respectively.

For $v\in\fes$  we denote by $\nabla_h v$ the element-wise gradient; namely, $(\nabla_h v)|_{K}:=\nabla (v|_K)$ for all $\el$.
Also, we denote by $\tnabla v$ the face-wise tangential gradient operator acting on the traces of $v$ on $\Gamma$, noting that $\tnabla v$ is double-valued on $\Gamma_{\dint}$. With a slight abuse of notation, we use the same symbol to denote the tangential gradient of boundary functions such as  the Dirichlet datum $g_{\ddd}^{}$.

\subsection{Mesh assumptions, inverse inequalities, and approximation results}\label{meshes}

Each mesh $\mesh$ is required to conform to the problem data in the following basic way.
First, $\mesh$  must represent exactly the domain, namely $\cup_{\el}\k=\Omega$, and be consistent with the subdivision of $\partial\Omega$ into $\Gamma_{\ddd}$ and $\Gamma_{\dn}$.
Moreover, we require resolution of multiscale features of the domain, such as complex boundaries and bottlenecks. Note that, in the context of polytopic meshes, such resolution is not intrinsic in that multiscale geometrical features can be represented by relatively `large' elements with `small' faces.
Hence we assume that
the local mesh size of each mesh $\mesh$ is comparable to the local finest scale of $\Omega$.
It is clear that such saturation-type assumption can always be satisfied, possibly  after a finite number of refinements of an original coarse mesh.
Further, we require the following general polytopic mesh regularity assumption.
\begin{assumption}[{Mesh regularity}]\label{A1}
We assume that each mesh $\mesh$ satisfies the following mesh regularity conditions. For each $\el$ it holds:
	\begin{itemize}
	\item[(a)]
	$\k$ is star-shaped with respect to an inscribed ball of radius $  r_\k\ge \Csh^{-1}h_\k$, centred at some point $\mathbf{x}^0\equiv\mathbf{x}^0_\k\in\k$;
	see Figure \ref{polygons} (right panel)  for an illustration when $d=2$.
	\item[(b)]
	Each face $F\subset\partial\k\cap\partial\Omega$
	is star-shaped  with respect to a $(d-1)$-dimensional  ball of radius $r_F\ge\Csh^{-1}h_\k$.
\end{itemize}
Here, $\Csh> 1$ is a constant independent of the discretization parameters. In what follows, we assume that the centres of the inscribed balls are selected to be chosen so that $\Csh$ is as minimal.
\end{assumption}

\begin{remark}
	All results  below generalize immediately to meshes containing polytopic elements that are finite unions of star-shaped polytopes; see Figure~\ref{polygons} (left panel) for an example. The minimal modifications required in the proofs are detailed in Remark~\ref{remark:union} below.

	Assumption~\ref{A1} allows for very general element shapes, including non-convex polytopes with arbitrary number of degenerating faces, i.e., element faces $F\subset\partial K$ with $|F|<<h_K^{d-1}$.
	Notable examples of acceptable subdivisions comprise elements with a bounded number of possibly degenerate hanging nodes and elements with `many' faces obtained by agglomeration  of very fine triangulations of the problem domain.
	Note, however, that Assumption~\ref{A1}(b) forbids degenerating/shape-irregular elemental faces on the boundary.
	This restriction can be relaxed to some extent at the expense of introducing unknown/hard-to-estimate constants, as discussed in Remark~\ref{remark:dirichlet} below.

	 To the best of our knowledge, Assumption \ref{A1} allows for the most general  polytopic meshes for which a posteriori error bounds are proven, for any Galerkin discretization and for any PDE problem. Nevertheless, it is, perhaps inevitably, more restrictive compared to the respective ones required for stability and a priori error analysis of dG methods; see ~\cite{DGpolyparabolic}, \cite[Section 4]{DGpolybook} and~\cite{DGease}, for details. The key advantage of the present setting is that it allows us to be as explicit as possible in the constants involved in the a posteriori error bounds.
	\end{remark}

\begin{figure}[!t]
	\begin{center}
		\begin{tabular}{cc}
			\includegraphics[scale=0.5]{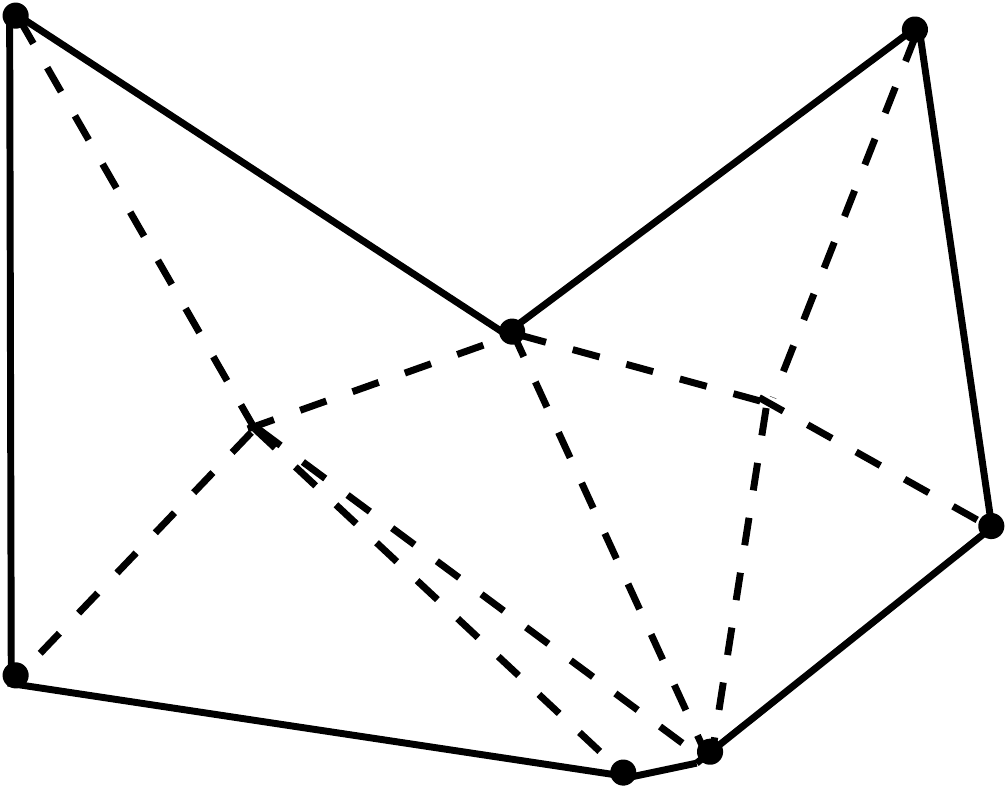} 	\hspace{0.5cm}
			\includegraphics[scale=0.2]{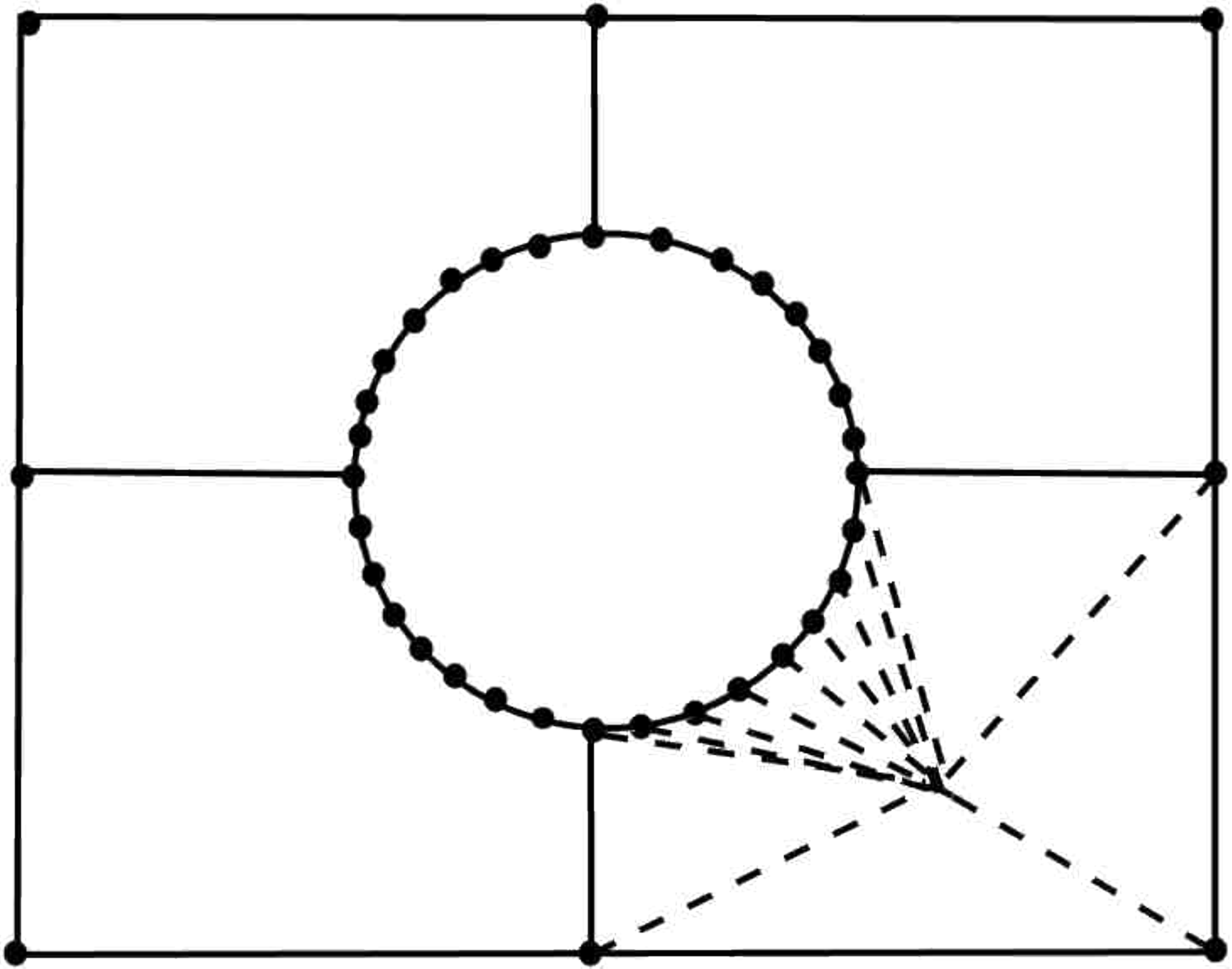}
		\end{tabular}
	\end{center}
	\setlength{\unitlength}{1cm}
	\begin{picture}(0,0)
	\put(3.6,2.7){{\scriptsize{\makebox(0,0){{$\k$}}}}}
	\put(2,2){{\scriptsize{\makebox(0,0){{${\bf x}_1^0$}}}}}
	\put(2.3,2.7){{\scriptsize{\makebox(0,0){{$\k_1$}}}}}
	\put(5.2,2.15){{\scriptsize{\makebox(0,0){{${\bf x}_2^0$}}}}}
	\put(4.7,2.7){{\scriptsize{\makebox(0,0){{$\k_2$}}}}}
	\put(10.9,1.5){{\scriptsize{\makebox(0,0){{$\k$}}}}}
	\put(11.1,1){{\scriptsize{\makebox(0,0){{${\bf x}^0$}}}}}
	\end{picture}
	\caption{Left panel: A polygonal element $\k$  with $7$ nodes is subdivided into two polygons $\k_1$ and $\k_2$ star-shaped with respect to the points ${\bf x}_1^0$ and ${\bf x}_2^0$, respectively. Right panel: a patch of polygonal elements. The element $\k$ has $12$ nodes and is star-shaped with respect to a ball centred in  ${\bf x}^0$.}
	\label{polygons}
\end{figure}

We also require the following, standard, local quasi-uniformity assumption.
\begin{assumption}[local quasi-uniformity]\label{A2} Each mesh $\mesh$ is locally quasiuniform, i.e., there exists
	a constant $\rho>0$ such that
	$
	\rho^{-1}\leq h_{\k_i} / h_{\k_j}\leq \rho,
	$
	whenever $\k_i,\k_j\in\mesh$ share  a common face.
\end{assumption}

We now state and prove a few results stemming from the above shape-regularity and local quasiuniformity assumptions.
A first, geometrical, consequence is that the number of  interface neighbours of each $\el$ is, in fact, bounded.
\begin{lemma}\label{lem:neigh}
Under Assumptions~\ref{A1} and~\ref{A2}, the number of interface neighbours of each element of $\mesh$ is uniformly bounded.
\end{lemma}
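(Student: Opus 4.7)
The plan is to run a simple volume-packing argument: even though the interface between $\k$ and a neighbour $\k'$ may be small, each neighbour must itself contain an inscribed ball of radius comparable to $h_\k$, and all these disjoint inscribed balls have to fit inside a bounded enlargement of $\k$.

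More concretely, I proceed as follows. First, let $\k'\in\mesh$ be any interface neighbour of $\k$, so $\overline{\k'}\cap\overline{\k}\neq\emptyset$. By Assumption~\ref{A2} we have $h_{\k'}\ge\rho^{-1}h_\k$, and by Assumption~\ref{A1}(a) each such $\k'$ contains an inscribed ball $B_{\k'}$ of radius
\[
r_{\k'}\;\ge\;\Csh^{-1}h_{\k'}\;\ge\;\Csh^{-1}\rho^{-1}h_\k.
\]
Second, since $\k'$ has diameter $h_{\k'}\le\rho h_\k$ and meets $\overline{\k}\subset \overline{B(\mathbf{x}_\k^0,h_\k)}$, we obtain
\[
\k'\;\subset\;B\bigl(\mathbf{x}_\k^0,\,(1+\rho)h_\k\bigr),
\]
and in particular $B_{\k'}\subset B(\mathbf{x}_\k^0,(1+\rho)h_\k)$ for every interface neighbour $\k'$.

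Third, since the elements of $\mesh$ are mutually disjoint open sets and $B_{\k'}\subset\k'$, the family $\{B_{\k'}\}_{\k'}$ of inscribed balls (indexed over the interface neighbours of $\k$) is pairwise disjoint. Denoting by $\omega_d$ the volume of the unit ball in $\mathbb{R}^d$ and by $N(\k)$ the number of interface neighbours of $\k$, comparing the total volume of the inscribed balls with that of the enclosing ball yields
\[
N(\k)\,\omega_d\bigl(\Csh^{-1}\rho^{-1}h_\k\bigr)^d\;\le\;\sum_{\k'}|B_{\k'}|\;\le\;\omega_d\bigl((1+\rho)h_\k\bigr)^d,
\]
so that $N(\k)\le \bigl((1+\rho)\rho\,\Csh\bigr)^d$, a bound depending only on $\Csh$, $\rho$ and $d$, as required.

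There is no real obstacle: the point worth emphasising is that the possibly small or degenerate faces permitted by Assumption~\ref{A1} do not enter the argument, because we only use the bulk star-shapedness of each neighbour (Assumption~\ref{A1}(a)), not any regularity of the shared interfaces. The same volume-packing estimate also delivers an explicit, dimension-dependent constant, which is useful for later quantitative bounds.
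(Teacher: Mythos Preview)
Your proof is correct and follows essentially the same volume-packing argument as the paper: neighbours contain inscribed balls of radius at least $\Csh^{-1}\rho^{-1}h_\k$ and are contained in $B(\mathbf{x}^0_\k,(1+\rho)h_\k)$, and comparing volumes yields the bound $N(\k)\le\bigl(\Csh\rho(1+\rho)\bigr)^d$. The only cosmetic difference is that you compare the volumes of the disjoint inscribed balls directly, whereas the paper bounds $|\k'|$ below and uses disjointness of the elements; both give the same constant.
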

\begin{proof}
	 Denote by ${\omega}_\k$ the set of elements in $\mesh$ which are neighbours of $\k$. We derive a rough upper-bound on the cardinality $n$ of ${\omega}_\k$ as follows. If $\k'\in{\omega}_\k$, then $\k'\subset B_{h_{\k}+h_{\k'}}(\mathbf{x}^0_\k
	 )\subseteq B_{(1+\rho)h_\k}(\mathbf{x}^0_\k)$ thanks to Assumption~\ref{A2}. On the other hand, Assumption~\ref{A1} implies that $\k'$ contains the ball $B_{r_{\k'}}(\mathbf{x}^0_{\k'})$ with $r_{\k'}\ge\Csh^{-1}h_{\k'}$. Letting $C_2:=\pi$ and $C_3:=4\pi/3$, we thus have $|K'|\ge C_dr_{\k'}^d\ge C_d \Csh^{-d}h_{\k'}^d\ge C_d \Csh^{-d}\rho^{-d}h_{\k}^d$. Therefore, $n(C_d\Csh^{-d}\rho^{-d}h_{\k}^d)\le C_d (1+\rho)^dh_\k^d$, or $n\le \Csh^{d}\rho^{d}(1+\rho)^d$, thereby showing that the number of neighbours of $\k$ is uniformly bounded as required.
\end{proof}

In particular, the presence of `many, small' faces is allowed if these are grouped into a few interfaces only; see Figure \ref{polygons} (right panel) for an example. Complex interfaces may be produced by agglomeration procedures used to perform numerical upscaling of complex domains described through very fine triangulations~\cite{Bassi2012,DGpolybook,Corti22} or within adaptive algorithms  to align the mesh to solution features and coefficients anisotropies~\cite{Elias97,DGpolybook,Sutton21}

\begin{lemma}\label{lem:trace}
	Let $\k\in\mesh$ satisfying Assumption \ref{A1}. Then, for all $v\in H^1(\Omega)$, we have the trace estimate
	\begin{equation}\label{eq:trace}
	\|v\|_{\partial \k}^2
	\le \trineq\left( \frac{\zeta}{h_\k} \|v\|_{\k}^2
	+ \frac{h_\k}{\zeta} \|\nabla v\|_{\k}^2 \right),
	\end{equation}
	for any   $\zeta>0$ and  $\trineq$ a positive constant only depending on $\Csh$ and on $d$.

Also, for each $v\in\mathcal{P}_p(\k)$, the inverse estimate
	\begin{equation}\label{eq:inverse_trace}
		\norm{v}{\partial \k}^2
		\le \frac{\tinv}{h_\k}
		 \norm{v}{\k}^2,
	\end{equation}
holds with $\tinv:= \Csh {(p+1)(p+d)}$.
\end{lemma}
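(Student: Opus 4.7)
My plan rests on a divergence-theorem argument using the radial vector field $\varphi(x) := x - \mathbf{x}_K^0$, where $\mathbf{x}_K^0$ is the centre of the inscribed ball supplied by Assumption~\ref{A1}(a). The key geometric ingredient is the star-shape bound $\varphi(x)\cdot\mathbf{n}(x) \ge r_K$ for a.e.\ $x\in\partial K$, which follows from $K$ being star-shaped with respect to $B_{r_K}(\mathbf{x}_K^0)$: at any regular boundary point $x$, the cone with apex $x$ and base $B_{r_K}(\mathbf{x}_K^0)$ must lie inside $K$, forcing the normal component of $\varphi$ to be at least $r_K$. Together with the obvious bounds $|\varphi|\le h_K$ on $K$ and $\nabla\cdot\varphi = d$, this is all the geometry I need.

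For the trace estimate \eqref{eq:trace} with $v\in H^1(K)$, I would apply Green's formula to $v^2\varphi$ to obtain
$$r_K\, \|v\|_{\partial K}^2 \;\le\; \int_{\partial K} v^2\, \varphi\cdot\mathbf{n}\, ds \;=\; d\,\|v\|_K^2 + 2\int_K v\,(\varphi\cdot\nabla v)\, dx.$$
Bounding the cross term via Cauchy--Schwarz and then Young's inequality with parameter $\zeta$, and dividing by $r_K \ge \Csh^{-1}h_K$, yields \eqref{eq:trace} with $\trineq$ depending only on $d$ and $\Csh$.

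For the inverse trace estimate \eqref{eq:inverse_trace} with $v\in\mathcal{P}_p(K)$, I would pass to polar coordinates centred at $\mathbf{x}_K^0$, writing $K=\{\mathbf{x}_K^0 + r\theta:\, \theta\in S^{d-1},\ 0\le r\le R(\theta)\}$, where $r_K\le R(\theta)\le h_K$. Along each ray, $q_\theta(r):=v(\mathbf{x}_K^0+r\theta)$ is a univariate polynomial of degree at most $p$, so the weighted one-dimensional polynomial trace estimate of Warburton--Hesthaven gives
$$q_\theta\bigl(R(\theta)\bigr)^2\; R(\theta)^{d-1} \;\le\; \frac{(p+1)(p+d)}{d\, R(\theta)} \int_0^{R(\theta)} q_\theta(r)^2\, r^{d-1}\, dr.$$
Transferring this back to $\partial K$ requires the surface-measure identity $ds = R(\theta)^{d-1}/(\mathbf{n}\cdot\theta)\, d\theta$, together with the star-shape consequence $\mathbf{n}\cdot\theta = \mathbf{n}\cdot\varphi/R(\theta) \ge r_K/R(\theta)$. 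Combining these, integrating over $S^{d-1}$, and using $R(\theta)\le h_K$ and $r_K^{-1}\le \Csh/h_K$ yields \eqref{eq:inverse_trace} with $\tinv = \Csh(p+1)(p+d)$, after absorbing the dimension-dependent factor into $\Csh$.

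The main obstacle is not the calculation itself — everything is essentially one integration by parts and one weighted 1D polynomial estimate — but the careful justification that the star-shape inequality $\varphi\cdot\mathbf{n}\ge r_K$ holds pointwise a.e.\ on $\partial K$ for general non-convex polytopes, and that the polar parametrisation is valid (i.e.\ that the ray map $\theta\mapsto \mathbf{x}_K^0+R(\theta)\theta$ is well-defined and yields the correct Jacobian). Once these geometric facts are in place, the proof is entirely elementary and produces constants that are explicit in $d$, $p$, and $\Csh$, with no hidden dependence on the number of faces of $K$ — which is exactly the feature needed to accommodate polytopic elements with arbitrarily many small faces later in the analysis.
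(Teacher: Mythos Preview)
Your approach is correct and is essentially the standard argument behind the results the paper cites from~\cite{DGease}; the paper itself does not give a proof but simply invokes Lemmas~4.4 and~4.7 of that reference, so your self-contained derivation via the radial vector field and polar coordinates is precisely the content being imported.

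One correction is needed in your one-dimensional weighted trace inequality: the sharp constant for
\[
q(R)^2 \le \frac{C_{p,d}}{R^d}\int_0^R q(r)^2\, r^{d-1}\,dr,\qquad q\in\mathcal{P}_p,
\]
is $C_{p,d}=(p+1)(p+d)$, not $(p+1)(p+d)/d$ as you state (your version already fails for constant $q$ and $d\ge 2$). This is the Christoffel--Darboux sum for the Jacobi weight $(1+x)^{d-1}$ on $[-1,1]$, and is also exactly the single-face Warburton--Hesthaven constant. With the corrected constant, your polar-coordinate computation gives
\[
\|v\|_{\partial K}^2 \le \frac{(p+1)(p+d)}{r_K}\|v\|_K^2 \le \frac{\Csh(p+1)(p+d)}{h_K}\|v\|_K^2,
\]
which is precisely $\tinv=\Csh(p+1)(p+d)$ --- no ``absorbing'' is needed, and indeed $\Csh$ is a fixed geometric constant from Assumption~\ref{A1}, not a repository for leftover dimension-dependent factors.
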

\begin{proof}
These estimates are special cases of the corresponding ones presented in~\cite{DGease}.
The trace estimate \eqref{eq:trace} follows from \cite[Lemma 4.7]{DGease}
 in conjunction with Assumption \ref{A1}, while \eqref{eq:inverse_trace} follows from \cite[Lemma 4.4]{DGease}
along with Assumption \ref{A1}.
	\end{proof}

\begin{lemma}\label{PF}
	Given $\k\in\mesh$ satisfying Assumption \ref{A1}, for each $v\in H^1(\k)$, $\k\in\mesh$, we have the bounds
	\begin{equation}\label{eq:PF}
	\ltwo{v-\Pi_0v}{\k} \leq C_{PF} h_\k \ltwo{\nabla v}{\k}
	\end{equation}
	and
	\begin{equation}\label{eq:PF_trace}
	\ltwo{v-\Pi_0v}{\partial \k} \leq \tilde{C}_{PF} \sqrt{h_\k} \ltwo{\nabla v}{\k},
	\end{equation}
	with $\Pi_0:L_2(\Omega)\to S_{\mathcal{T}}^{0}$, the orthogonal $L_2$-projection onto  $S_{\mathcal{T}}^{0}$, the space of element-wise constants; here ${C}_{PF},\tilde{C}_{PF}>0$  depend on $d$ and on $\Csh$ only.
\end{lemma}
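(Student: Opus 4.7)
The first inequality \eqref{eq:PF} is a Poincaré–Wirtinger inequality on the polytopic element $\k$. My plan is to invoke the well-known fact that on a domain that is star-shaped with respect to a ball of radius $r_\k$, the Poincaré constant for the mean-value projection can be bounded explicitly in terms of the chunkiness parameter $h_\k/r_\k$. By Assumption~\ref{A1}(a), this ratio is at most $\Csh$, so the resulting constant depends only on $d$ and $\Csh$. Concretely, one can argue as in Dupont–Scott: for any $x\in\k$ and any $y$ in the inscribed ball $B_{r_\k}(\mathbf{x}^0)$, the whole segment $[y,x]$ lies in $\k$ by star-shapedness, so writing
\[
v(x)-\frac{1}{|B_{r_\k}(\mathbf{x}^0)|}\int_{B_{r_\k}(\mathbf{x}^0)} v(y)\,\mathrm{d}y
= \frac{1}{|B_{r_\k}(\mathbf{x}^0)|}\int_{B_{r_\k}(\mathbf{x}^0)}\!\!\int_0^1 \nabla v(y+t(x-y))\cdot(x-y)\,\mathrm{d}t\,\mathrm{d}y,
\]
and using a Riesz-potential type bound together with $|x-y|\le 2h_\k$, one obtains control of $\|v-\bar v_B\|_\k$ by $h_\k\|\nabla v\|_\k$, where $\bar v_B$ denotes the mean on the ball. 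The $L_2$-best approximation property of $\Pi_0v$ then yields \eqref{eq:PF} with $C_{PF}$ depending only on $d$ and on $\Csh$ (through $h_\k/r_\k\leq \Csh$).

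For the trace bound \eqref{eq:PF_trace}, the plan is to set $w:=v-\Pi_0 v$, so that $\nabla w=\nabla v$ pointwise, and apply the trace estimate \eqref{eq:trace} of Lemma~\ref{lem:trace}:
\[
\ltwo{w}{\partial \k}^2 \le \trineq\left(\frac{\zeta}{h_\k}\ltwo{w}{\k}^2 + \frac{h_\k}{\zeta}\ltwo{\nabla v}{\k}^2\right).
\]
Substituting \eqref{eq:PF} into the first term on the right, choosing $\zeta=1$ (or optimizing over $\zeta$), and taking square roots produces the stated bound with $\tilde{C}_{PF}=\sqrt{\trineq(C_{PF}^2+1)}$, which again depends only on $d$ and $\Csh$.

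The only substantive step is the first one: verifying the explicit dependence of the Poincaré constant on the chunkiness parameter for a general polytopic star-shaped element (in particular, one with possibly many very small faces). The integral representation above bypasses any reliance on face sizes, which is important since Assumption~\ref{A1}(a) controls only the inscribed ball, not the faces. Once \eqref{eq:PF} is in hand, \eqref{eq:PF_trace} is an immediate consequence of the already-established trace inequality \eqref{eq:trace}.
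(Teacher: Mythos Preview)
Your proposal is correct and follows essentially the same route as the paper: for \eqref{eq:PF} the paper likewise invokes a Poincar\'e--Friedrichs inequality on star-shaped domains with explicit dependence on the chunkiness parameter $h_\k/r_\k\le\Csh$ (citing \cite{zheng2005friedrichs,veeser2011poincare} rather than sketching the Dupont--Scott integral representation), and for \eqref{eq:PF_trace} it applies the trace inequality \eqref{eq:trace} exactly as you do.
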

\begin{proof}
	The key technical difficulty is to show that ${C}_{PF}$ and $\tilde{C}_{PF}$ are independent of the shape of $\k$. Under  Assumption~\ref{A1}, for $\k\in \mesh$ we can apply the Poincare-Friedrichs inequalities proven in \cite[Theorem 3.5]{zheng2005friedrichs} and \cite[Proposition 2.10]{veeser2011poincare}, with explicit  dependence on the shape-regularity constant $\Csh$ and dimension $d$, yielding \eqref{eq:PF}. Then, \eqref{eq:PF_trace} follows using the trace inequality~\eqref{eq:trace}.
\end{proof}

A crucial technical aspect of the analysis below is the availability of a shape-regular, auxiliary triangulation defined as follows.
\begin{definition}[Auxiliary mesh] \label{dual}
Given the sequence of meshes $\mesh$, we name \emph{auxiliary  mesh} a corresponding  sequence of  conforming simplicial meshes $\amesh$ satisfying: for each $\amesh$ and $T\in\amesh$
\begin{itemize}
\item[(a)] (shape-regularity) the radius $r_T$ of the largest circle inscribed in $T$ is such that $r_T\ge \widehat{\tau}^{-1} h_T$, where $h_T$ denotes the diameter of $T$;
\item[(b)] (local mesh-size compatibility) if $\el$ is such that $\k\cap T\neq \emptyset$, it holds $\widehat{\rho}^{-1}\leq h_T / h_{\k}\leq \widehat{\rho}$,
\end{itemize}
with $\widehat{\tau},\widehat{\rho}>1$ constants independent of the discretization parameters.
\end{definition}

An immediate consequence of the above definition is that, if $\amesh$ is an auxiliary mesh sequence, then the number of intersections of each $T\in\amesh$ with the elements of the corresponding polytopic mesh $\mesh$ is uniformly bounded in function of the shape-regularity and local quasi-uniformity constants of both the polytopic and auxiliary mesh. The proof of this fact follows along the lines of that of Lemma~\ref{lem:neigh}.

We note that the evaluation of the error estimator presented below does \emph{not} require the construction of auxiliary meshes  in practice.
As long as their existence  can be assumed, the a posteriori error bound holds.
Moreover, we do not expect such assumption to limit in any possible way the configurations of very general polytopic mesh sequences allowed by Assumptions~\ref{A1} and~\ref{A2}. Rather, the issue is to show that auxiliary
meshes tightly close to the polytopic meshes can be constructed in principle.
To this end, we present two possible algorithms for the  construction of auxiliary meshes which apply to progressively complicated primal mesh configurations.
Both algorithms are easily and cheaply implementable. As such, if desired, the corresponding auxiliary mesh quality parameters may be computed in practice, thus permitting the explicit evaluation of their impact on the a posteriori error bound.

\subsubsection{Auxiliary sub-mesh}\label{submesh}
Assume that  the mesh $\mesh$ is \emph{fully shape-regular} in the sense that, for each $\el$, each face $F\in\partial\k$ satisfies the shape-regularity property which is stated in Assumption~\ref{A1}(b) for boundary faces. Then, an auxiliary mesh can be simply constructed by joining $F$ to $\mathbf{x}^0_\k$, the centre of star-shapedness of $\el$, for each $\el$  and $F\in\partial\k$.
This approach can be extended to the more general case in which every interface can be replaced by a shape-regular triangulated surface which does not compromise the shape-regularity of the neighbouring elements. For instance, any of the four circular interfaces appearing in Figure~\ref{polygons} (right panel) may be replaced by the segment joining its end-points. This will result in the \emph{auxiliary sub-mesh} shown in Figure~\ref{auxi_mesh} (left panel).

\subsubsection{Constrained Delauney auxiliary mesh}\label{Delauneymesh}
Auxiliary sub-meshes are in general not obvious to construct and, moreover,
employing a sub-mesh is not possible when the element faces are shape-irregular and/or of arbitrarily small size with respect to the elemental size. In this case, it is necessary to consider auxiliary meshes which are not logically sub-meshes of the corresponding polytopic meshes.
One possibility, designed to maximise shape-regularity while maintaining the auxiliary mesh as close as possible to the polytopic mesh in terms of local mesh-size, is to exploit the concept of \emph{constrained Delaunay triangulations}, introduced in~\cite{Chew89} for $d=2$ and generalized to any $d$ in~\cite{Shew98}; see also~\cite{Edel,Shew08}.
We recall their definition, limiting ourselves to the case of interest, namely when the constraints are given by the mesh faces laying on the boundary of $\Omega$.
\begin{definition}[Constrained Delaunay triangulation]\label{Delaunay}
Let $\mathcal{X}=\{\mathcal{P},\mathcal{F}\}$ with $\mathcal{P}$ a set of points in $\Omega \subset\mathbb{R}^d$ and $\mathcal{F}$ the set  of boundary faces of the mesh $\mesh$.
A \emph{constrained Delaunay triangulation}  (CDT) associated to $\mathcal{X}$ is a triangulation of $\Omega$, which conforms to $\mathcal{F}$,  has $\mathcal{P}$  as its set of internal vertices, and satisfies the following \emph{constrained Delaunay} property: for every $k\in\{1,\dots,d\}$ and every $k$-dimensional simplex $S$ in the triangulation which is not on $\partial\Omega$,  there exists a circle $\mathcal{C}$ such that:
\begin{itemize}
\item[(1)] the vertices of $S$ are on the boundary of $\mathcal{C}$;
\item[(2)] if a vertex $P$ of the CDT is in the interior of $\mathcal{C}$, then the straight line connecting $P$ to at least one of the vertices of $S$ intersects $\partial\Omega$. (Then, we say that $P$ cannot be seen from one of the vertices of $S$.)
\end{itemize}
\end{definition}

This definition generalizes the concept of Delaunay triangulations in that, if no constraints are given, it would coincide with the definition of Delaunay triangulations. Moreover, as Delaunay triangulations, CDTs maximizes the minimum angle among all triangulations generated by the cloud of points $\mathcal{P}$ and constrained by $\mathcal{F}$. The existence of CDTs is analyzed in ~\cite{Chew89,Shew98,Shew08}:
constrained Delaunay triangulations always exists for $d=2$ while for $d=3$ they exist if any ridge formed by $\mathcal{F}$ is \emph{strongly Delaunay}. A simplex is strongly Delaunay if  the circle $\mathcal{C}$ of Definition~\ref{Delaunay} does not \emph{enclose} any other point in $\mathcal{X}$.  As shown in~\cite{Shew98,Shew08}, this condition can always be satisfied, possibly after the insertion of a finite number of regular nodes on non-strongly Delaunay ridges in the skeleton of $\mathcal{F}$. Moreover,  once every boundary edge is strongly Delaunay, the restriction of the CDT on each interface is Delaunay.

Given the polytopic mesh $\mesh$, here we consider the  constrained Delaunay triangulation of $\Omega$  with  seeds $\mathcal{P}=\{\mathbf{x}^0_\k\}_{\el}$, possibly after the modification of $\mathcal{F}$ discussed above. We refer to Figure~\ref{auxi_mesh} (right panel) for an illustration.

\begin{remark}
We expect the CDTs associated to $\mesh$ to always satisfy the auxiliary mesh Definition~\ref{dual} owing to their shape-regularity maximisation property and the fact that the seeds in $\mathcal{P}$ are well-distanced by assumption. However,  due to the extreme generality of $\mesh$, proving this fact
appears to be challenging and would result into overly-pessimistic estimation of the shape-regularity and quasi-uniformity constants.
Specifically, the difficulty comes from the contrasting requirements of shape-regularity and local mesh-size compatibility, due to which an element of the CDT may overlap with elements of $\mesh$ which are not direct neighbours; see Figure~\ref{meshT}(right panel) for an example.
Hence, in the \emph{a posteriori} error analysis below, we have opted for keeping the requirements of Definition~\ref{dual}  as an assumption which can be verified economically and sharply in practice.
Indeed, contrary to auxiliary sub-meshes, CDTs can always be constructed and their  construction is, in fact, simpler in general.
If desired, their qualitative parameters  may be easily evaluated using well established and efficient algorithms~\cite{Chew89,Shew98,Shew08}.
\end{remark}

\begin{figure}[t]
	\begin{center}
		\begin{tabular}{cc}
			\includegraphics[scale=0.34]{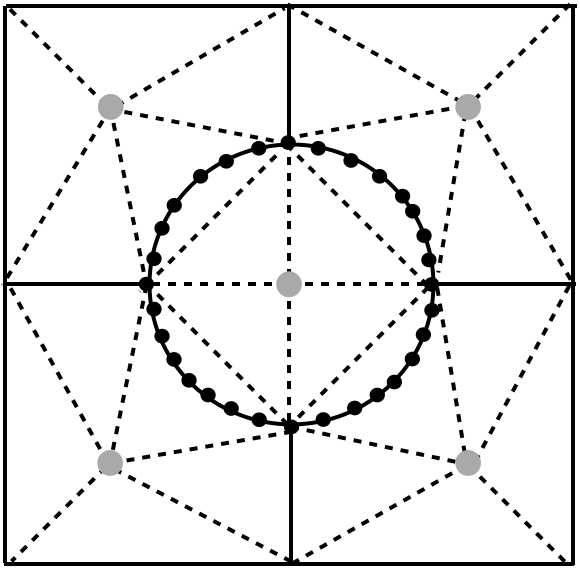}
			 \hspace{0.8cm}
			\includegraphics[scale=0.34]{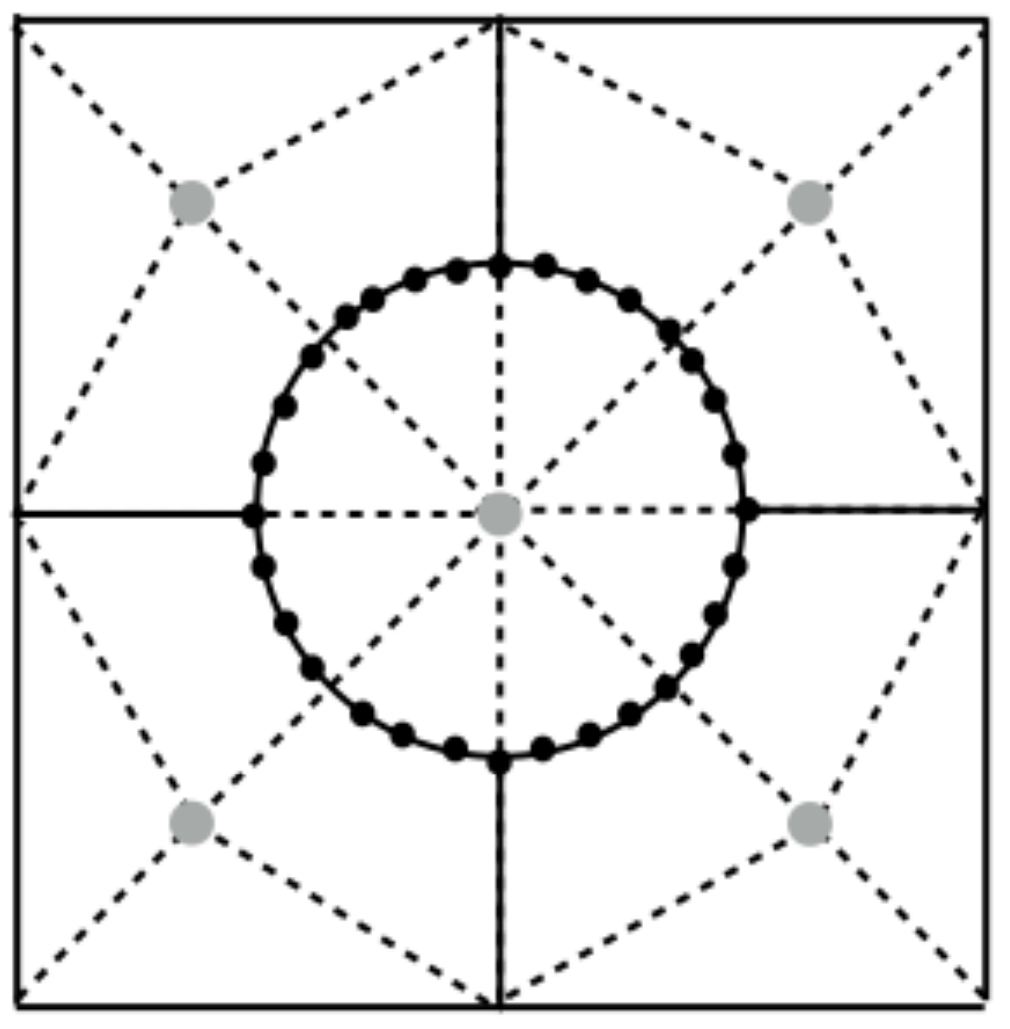}
		\end{tabular}
	\end{center}
	\caption{A mesh with 5 elements (solid lines) and associated auxiliary meshes (dash lines). Left panel: auxiliary sub-mesh with 24 elements. Right panel:  auxiliary constrained Delauney mesh with 16 elements. The dual nodes belonging to the set $\mathcal{P}$  of Definition~\ref{dual} are marked with a grey bullet.}
	\label{auxi_mesh}
\end{figure}

\begin{remark}\label{remark:union}
	In the case of meshes with elements  made of finite unions of star-shaped sub-polytopes,  auxiliary meshes should be constructed starting from the centres of the sub-polytopes instead. The results below still hold, as long as local quasi-uniformity is assumed for the sub-mesh comprising the star-shaped sub-polytopes.
\end{remark}

\subsubsection{Auxiliary mesh interpolation and inverse estimates}

We recall the following Scott-Zhang-type quasi-interpolation result \cite{MR1011446,Melenk,MR1726480,MR1807259}.

\begin{lemma}[Quasi-interpolant]\label{lem:interp}
	Let $\amesh$ be a shape-regular simplicial  subdivision of $\Omega$ not containing any hanging nodes. Then, there exists a quasi-interpolation operator $I_h:H^1(\Omega)\rightarrow H^1(\Omega)\cap S^1_{\widehat{\mathcal{T}}}$, such that
	\begin{equation}
	\label{eq:interpT}
	\ltwo{v-I_h v}{T}+{h_T}\ltwo{\nabla (v-I_h v)}{T}\le \Csz {h_T} \ltwo{\nabla v}{{{\omega}}_T},
	\end{equation}
	where  ${\omega}_T$ denotes the patch of elements in $\amesh$ with non-empty intersection with $\overline{T}$. The constant $\Csz>0$ depends only on the shape-regularity constant $\CshT$ of the auxiliary mesh $\widehat{\mesh}$. If the function $v$ has nonhomogeneous piecewise linear trace on $\partial\Omega$,  we have $	I_h v |_F = v |_F$, for all $ F \subset \partial T \cap \partial \Omega$,  $T \in \amesh$.

Moreover, we have
	\begin{eqnarray}
	\ltwo{v-I_h  v}{\partial\k}^2
	\le \Cin {h_\k} \ltwo{\nabla v}{\widehat{\omega}_{\k}}^2,
	\label{eq:interpF}
\end{eqnarray}
	with $\Cin>0$ depending only on the shape-regularity of $\mesh$ and of $\amesh$. Here, $\widehat{\omega}_{\k}=\cup\{{\omega}_T:\, {T}\cap {\k}\neq \emptyset,\, T\in\amesh \}$.
\end{lemma}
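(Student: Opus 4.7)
The plan is to build $I_h$ by the classical Scott--Zhang construction on the conforming shape-regular simplicial mesh $\amesh$, and then derive the polytopic trace estimate \eqref{eq:interpF} by reducing it to the standard local estimate \eqref{eq:interpT} via the trace inequality of Lemma~\ref{lem:trace}. For every vertex $z$ of $\amesh$ I would select a $d$-simplex $\sigma_z\in\amesh$ containing $z$, choosing $\sigma_z$ to contain a $(d-1)$-face of $\amesh$ on $\partial\Omega$ whenever $z\in\partial\Omega$. An $L_2$-dual basis $\psi_z$ on $\sigma_z$ to the restrictions of the nodal hat functions $\phi_z$ would then be defined, and I would set $I_hv:=\sum_z\bigl(\int_{\sigma_z}\psi_z v\bigr)\phi_z$. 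This yields an operator into $H^1(\Omega)\cap S^1_{\amesh}$ which reproduces globally continuous piecewise linear functions and, thanks to the boundary choice of $\sigma_z$, preserves piecewise linear traces on $\partial\Omega$. Estimate \eqref{eq:interpT} then follows from polynomial invariance, the Bramble--Hilbert lemma on a reference simplex, and affine scaling, with $\Csz$ depending only on $\CshT$.

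For the trace estimate \eqref{eq:interpF}, I would first note that $v-I_hv\in H^1(\Omega)$, so applying the $H^1$ trace inequality \eqref{eq:trace} on the polytopic element $\k$ with $\zeta=1$ gives
\[
\|v-I_hv\|_{\partial \k}^2\le \trineq\Bigl(h_\k^{-1}\|v-I_hv\|_\k^2+h_\k\|\nabla(v-I_hv)\|_\k^2\Bigr).
\]
I would then decompose $\k$ into its intersections with the auxiliary simplices that meet it, bound $\|v-I_hv\|_\k^2\le\sum_{T\cap\k\neq\emptyset}\|v-I_hv\|_T^2$ and analogously for the gradient, and apply \eqref{eq:interpT} termwise. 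Using the mesh-size compatibility $h_T\le\widehat{\rho}\,h_\k$ from Definition~\ref{dual}(b), the two terms collapse into quantities proportional to $h_\k^2\sum_T\|\nabla v\|_{\omega_T}^2$ and $\sum_T\|\nabla v\|_{\omega_T}^2$, respectively. By the definition of $\widehat{\omega}_\k$, each $\omega_T$ with $T\cap\k\neq\emptyset$ is contained in $\widehat{\omega}_\k$, so a bounded-overlap count delivers the claimed bound $\Cin\,h_\k\|\nabla v\|_{\widehat{\omega}_\k}^2$, with $\Cin$ depending only on $\trineq$, $\Csz$, $\widehat{\rho}$ and the overlap constant.

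The main point requiring care is the overlap bookkeeping in the last step: one must verify that the number of auxiliary simplices meeting a given polytopic element $\k$ is uniformly bounded, and that each $\omega_T$ with $T\cap\k\neq\emptyset$ lies inside $\widehat{\omega}_\k$. The first is a volume-comparison argument analogous to the proof of Lemma~\ref{lem:neigh}, exploiting the shape-regularity of $\amesh$ together with the compatibility $h_T\asymp h_\k$; the second is immediate from the definition of $\widehat{\omega}_\k$. No new analytical obstacle is anticipated beyond this combinatorial step.
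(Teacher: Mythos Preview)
Your proposal is correct and follows essentially the same route as the paper: cite the Scott--Zhang construction for \eqref{eq:interpT}, then obtain \eqref{eq:interpF} by applying the polytopic trace inequality \eqref{eq:trace} with $\zeta=1$, decomposing the resulting $L_2$- and $H^1$-norms over $\k$ into contributions from the auxiliary simplices $T$ meeting $\k$, and invoking \eqref{eq:interpT} termwise together with $h_T\asymp h_\k$. The additional details you supply (the explicit Scott--Zhang definition and the bounded-overlap count) are standard and not spelled out in the paper, but the argument is the same.
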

\begin{proof} The proof of \eqref{eq:interpT} can be found in \cite{MR1011446,Melenk,MR1726480,MR1807259} for various levels of generality.
	Noting that \eqref{eq:interpF} refers to the trace on the skeleton of of the \emph{original} polytopic mesh, we apply the trace inequality~\eqref{eq:trace} with  $\zeta=1$,
	\begin{eqnarray}
		\ltwo{v-I_h  v}{\partial\k}^2
		&\le& \trineq \left(h_\k^{-1}\ltwo{v-I_h  v}{\k}^2
		+{h_\k}\ltwo{\nabla(v-I_h  v)}{\k}^2
		\right)
		\nonumber\\
		&\le& \trineq \sum_{T\in\amesh: {T}\cap {\k}
			\neq \emptyset} \left(h_\k^{-1}\ltwo{v-I_h  v}{T}^2+{h_\k}\ltwo{\nabla(v-I_h  v)}{T}^2\right),
		\nonumber
	\end{eqnarray}
	and~\eqref{eq:interpF} follows from~\eqref{eq:interpT}, depending on the shape-regularity of $\mesh$ through $C_{tr}$ and on the shape-regularity of $\amesh$ through~\eqref{eq:interpT}.
\end{proof}

The next polynomial inverse estimation result, relating $L_2$-norms on subsets of the mesh skeleton $\Gamma$ of $\mesh$ with $L_2$-norms over elements of the auxiliary mesh $\amesh$ will be important for the analysis below.
 In this context, for each $T\in\amesh$, we consider the
set of \emph{cut interfaces} obtained  by the intersection of $\Gamma$ with the simplex $T$,
which we characterise as follows:
\begin{equation}\label{eq:gammaT}
\Gamma_T:=\Gamma\cap T=\Gamma_T^{{\rm c}} \cup  \Gamma_T^{{\rm o}},
\end{equation}
with $\Gamma_T^{{\rm c}}$  the set of interfaces $\gamma_T\in\Gamma_T\cap\Omega$ such that $\gamma_T\subset\partial\k$ with $\k$ an element of $\mesh$ whose centre of star-shapedness  is a vertex of $T$, and  $\Gamma_T^{{\rm o}}:=\Gamma_T\setminus \Gamma_T^{{\rm c}}$. Note that the number of interfaces in $\Gamma_T$ is bounded since the number of intersections of $T$ with the elements $\el$ is bounded; however, each such interface may be made of an arbitrary number of cut faces, due to the complexity of the intersecting polytopic elements.

The subdivision in~\eqref{eq:gammaT} reflects increasing levels of difficulty, with $ \Gamma_T^{{\rm o}}$ collecting complex interfaces for which the proof of the inverse estimate is more challenging. As usual, the proof rests in employing simplices obtained by joining each face composing $\Gamma_T$ with the most appropriate vertex of $T$ and summing up all contributions. When $\Gamma_T^{{\rm o}}\neq\emptyset$, such simplices may overlap. In this case, the constant of the resulting inverse estimate depends on  the number of  such overlaps, and thus reflect the complexity of  $\Gamma_T$. The complex interface  $\gamma_T$ highlighted in Figure~\ref{meshT} (right panel) provides an example in which this eventuality may occur.

\begin{remark}
In the case of sub-mesh auxiliary meshes, we always have $\Gamma_T^{{\rm o}}=\emptyset$. Instead, for constrained Delauney auxiliary meshes, $\Gamma_T^{{\rm o}} \neq\emptyset$ in general;
we refer to Figure \ref{meshT} for some examples.
\end{remark}
\begin{figure}[t]
	\begin{center}
	\includegraphics[scale=0.37]{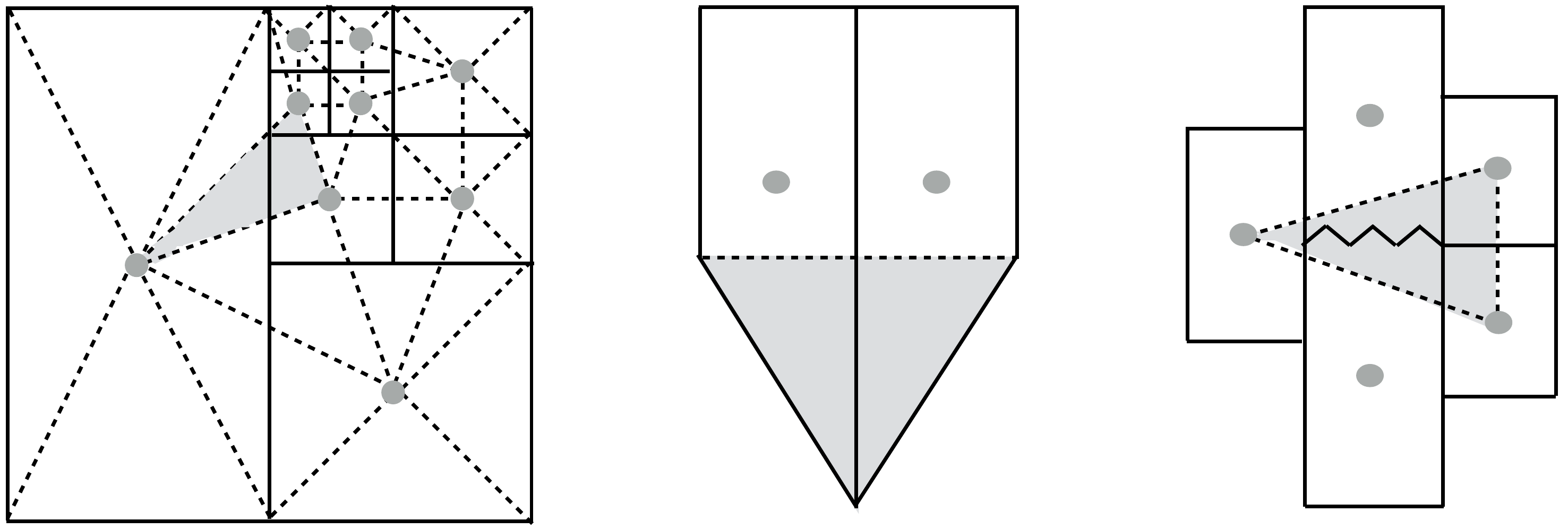}
	\setlength{\unitlength}{1cm}
	\begin{picture}(8,0)
	\put(.2,2.8){{\scriptsize{\makebox(0,0){{$T$}}}}}
	\put(4.3,1.8){{\scriptsize{\makebox(0,0){{$T$}}}}}
	\put(8.85,2.7){{\scriptsize{\makebox(0,0){{$T$}}}}}
	\put(8.25,2.35){{\scriptsize{\makebox(0,0){{$\gamma_{T}$}}}}}
	\end{picture}
	\end{center}
	\caption{Constrained Delauney auxiliary mesh examples. Different configurations for the set   $\Gamma_T$ of~\eqref{eq:gammaT} for the auxiliary element $T$ marked in grey. Left:  $\Gamma_T=\Gamma_T^{{\rm c}}$.
	Centre: $\Gamma_T=\Gamma_T^{{\rm o}}$.
	Right: $\Gamma_T^{{\rm o}}$ is made of a single interface $\gamma_T$ while the rest of $\Gamma_T$ belongs to $\Gamma_T^{{\rm c}}$. The simplices required in the proof of Lemma~\ref{lem:inverseT} to estimate the contributions from each edge in $\gamma_T$ partially overlap (not shown).
	 }
	\label{meshT}
\end{figure}
\begin{lemma}\label{lem:inverseT}
	Let Assumptions \ref{A1} and \ref{A2} hold  and let $\amesh$ be an auxiliary mesh, related to $\mesh$. Given $q\in\mathbb{N}$, for all $T \in \amesh$ and $v\in \mathcal{P}_{q}(T)$, we have
	\begin{equation}
	\ltwo{v}{\Gamma_T}^2
	\le \frac{\tinvT}{h_T}\norm{v}{T}^2.
	\label{eq:inverseF}
	\end{equation}
	The constant $\tinvT$ depends on $d$, $q$, and on shape-regularity and local quasi-uniformity of both $\mesh$ and $\amesh$ only.  If  $\Gamma_T\equiv \Gamma_T^{{\rm c}}$ then  $\tinvT=\Csh {{(q+1)(q+d)}}$. If  $\Gamma_T^{{\rm o}}\neq\emptyset$, $\tinvT$ depends also on the number  of overlaps $n_{{\rm o}}$ required to cover the elements of $\Gamma_T^{{\rm o}}$, cf.~\eqref{eq:gamma_o_bound}. In particular, if $\Gamma_T^{{\rm o}}\subset\partial\Omega$, then  $n_{{\rm o}}\le d$.
\end{lemma}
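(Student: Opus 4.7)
The plan is to bound $\ltwo{v}{F}^2$ for each face $F$ composing $\Gamma_T$ via an inverse trace estimate on a non-degenerate sub-simplex $S_F=\mathrm{conv}(F,v_F)\subset T$ built by joining $F$ to a suitably selected vertex $v_F$ of $T$, and to then sum the contributions over $F$. The key tool is the sharp simplex inverse trace inequality
\[
\ltwo{v}{F}^2 \le \frac{(q+1)(q+d)}{\mathrm{alt}(S_F)}\,\ltwo{v}{S_F}^2,\qquad v\in\mathcal{P}_q(T),
\]
where $\mathrm{alt}(S_F)$ denotes the perpendicular distance from $v_F$ to the affine hull of $F$. Crucially, this bound depends on $S_F$ only through its altitude, not through its full shape-regularity; hence very flat $S_F$ are acceptable, provided $\mathrm{alt}(S_F)$ is bounded below by a uniform multiple of $h_T$.

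For the case $\Gamma_T\equiv\Gamma_T^{{\rm c}}$, I would take $v_F:=\mathbf{x}^0_\k$, where $\k\in\mesh$ is the polytopic element whose boundary contains $F$; by definition of $\Gamma_T^{{\rm c}}$, this point is a vertex of $T$. Convexity of $T$ ensures $S_F\subset T$, while star-shapedness of $\k$ via Assumption~\ref{A1}(a) yields $S_F\subset\k$ and $\mathrm{alt}(S_F)\ge r_\k\ge\Csh^{-1}h_\k$, which is bounded below by a uniform multiple of $h_T$ via Definition~\ref{dual}(b). Distinct $S_F$ meet in a set of zero measure: those sharing the apex $\mathbf{x}^0_\k$ live in disjoint cones of the star-shaped element $\k$, while those with different apices live in disjoint polytopic elements. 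Summing the displayed inverse trace estimate over all $F$ and using $\sum_F\ltwo{v}{S_F}^2\le\ltwo{v}{T}^2$ delivers~\eqref{eq:inverseF} with $\tinvT=\Csh(q+1)(q+d)$, once the shape-regularity and mesh-size-compatibility factors are absorbed into $\Csh$.

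When $\Gamma_T^{{\rm o}}\neq\emptyset$, the point $\mathbf{x}^0_\k$ associated to a face $F$ in an interface of $\Gamma_T^{{\rm o}}$ is not, in general, a vertex of $T$, so I would pick instead a vertex $v_F$ of $T$ on the side of $F$ that maximises the altitude. A standard volume argument together with the shape-regularity of $T$ then supplies $\mathrm{alt}(S_F)\gtrsim r_T\ge\widehat{\tau}^{-1}h_T$. The sub-simplices produced this way may overlap, but by hypothesis each point of $T$ lies in at most $n_{{\rm o}}$ of them, so $\sum_F\ltwo{v}{S_F}^2\le n_{{\rm o}}\ltwo{v}{T}^2$, and~\eqref{eq:inverseF} follows with $\tinvT$ depending linearly on $n_{{\rm o}}$. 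In the special case $\Gamma_T^{{\rm o}}\subset\partial\Omega$, the CDT construction forces the faces of $\Gamma_T^{{\rm o}}$ to lie on the $(d-1)$-faces of $T$; each $S_F$ then coincides with $T$ itself (taking $v_F$ opposite to the face of $T$ hosting $F$), and since at least one face of $T$ must be interior to $\Omega$, at most $d$ faces of $T$ can lie on $\partial\Omega$, whence $n_{{\rm o}}\le d$.

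The main obstacle is the sharp counting of $n_{{\rm o}}$ in the general configuration $\Gamma_T^{{\rm o}}\not\subset\partial\Omega$: when many small cut faces accumulate inside $T$, different choices of apices can produce sub-simplices overlapping in intricate patterns, and the covering multiplicity must be estimated combinatorially so that the constant in~\eqref{eq:inverseF} depends only on the complexity of the cut pattern, not on the number of cut faces themselves. The $\partial\Omega$ subcase is by contrast essentially rigid, since the geometry is pinned by the boundary faces of $T$.
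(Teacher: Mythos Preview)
Your proof proposal is correct and follows essentially the same strategy as the paper's: in both cases, each face (or simplicial sub-face) of $\Gamma_T$ is connected to a suitable vertex of $T$---the centre of star-shapedness $\mathbf{x}^0_\k$ for $\Gamma_T^{{\rm c}}$, and a vertex at distance at least $r_T$ from the face's hyperplane for $\Gamma_T^{{\rm o}}$---and the sharp inverse trace inequality is applied on the resulting sub-simplex, with non-overlap (respectively bounded overlap $n_{{\rm o}}$) controlling the sum. The only minor omission is that, for $d=3$, a cut face $F\cap T$ need not be a simplex, and the paper explicitly subdivides it into at most four triangles before applying the construction; otherwise the arguments coincide.
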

\begin{proof}
	We consider $\Gamma_T^{{\rm c}}$ and $\Gamma_T^{{\rm o}}$ separately, starting with $\Gamma_T^{{\rm c}}$.
	Exploiting the star-shapedness property of $\Gamma_T^{{\rm c}}$ with respect to the vertices of $T$, which is  inherited from Assumption~\ref{A1},
	the inverse inequality
	\[
	\ltwo{v}{\Gamma_T^{{\rm c}}}^2\le \frac{\Csh {{(q+1)(q+d)}}}{h_T}
	\norm{v}{T}^2,
	\]
	follows in the same way  as~\eqref{eq:inverse_trace}.

	Considering now the set $\Gamma_T^{{\rm o}}$,
	we observe that each cut interface belonging to $\Gamma_T^{{\rm o}}$ may be partitioned into a set of $(d-1)$-dimentional simplices.
	Indeed, each interface inherits a set of, possibly cut, simplicial faces from $\Gamma$. If a face $F\in\Gamma$ is only partially contained in $\Gamma_T^{{\rm o}}$, then $F\cap T$ is still an interval if $d=2$ while it can always be subdivided into four triangles if $d=3$.
	Let now $\widehat{F}$ be one such ($d-1$)-dimentional simplex within $\Gamma_T^{{\rm o}}$.
	We note that, if  a simplex  $T$ has inradius $r_T$, then,  for any given intersecting hyperplane $Z$, there exists a vertex $V$ of $T$ such that $d(V,Z)\ge r$. Otherwise, $S$ must be contained in the region $\{ {\bf x}\in {\mathbb R}^d: d({\bf x}, Z)<r\}$, in contradiction with the fact that $T$ contains a (closed) ball of radius $r$. It follows that we can always construct a non degenerate simplex $T_{\widehat{F}}$ by joining $\widehat{F}$ with a vertex $V$ of $T$ such that $d(V,\widehat{F})\ge r$. We thus have, cf.~\eqref{eq:inverse_trace}, the inverse estimate:
	\begin{equation}\label{eq:invFhat}
		\ltwo{v}{\widehat{F}}^2\le \CshT\frac{(q+1)(q+d)}{h_T}
		\norm{v}{T_{\widehat{F}}}^2\qquad\forall v\in \mathcal{P}_{q}(T).
	\end{equation}
	Then, summing up over all $\widehat{F}\in\Gamma_T^{{\rm o}}$ we conclude
	\begin{equation}\label{eq:gamma_o_bound}
	\ltwo{v}{\Gamma_T^{{\rm o}}}^2\le \frac{\CshT {{(q+1)(q+d)}}}{h_T} \sum_{\widehat{F}\in\Gamma_T^{{\rm o}}}
	\norm{v}{T_{\widehat{F}}}^2
	\le n_{{\rm o}} \frac{\CshT {{(q+1)(q+d)}}}{h_T}
	\norm{v}{T}^2,
	\end{equation}
	 with $n_{{\rm o}}$ the number of overlaps of the simplices $T_{\widehat{F}}$, $\widehat{F}\in\Gamma_T^{{\rm o}}$.
	In particular, if $\Gamma_T^{{\rm o}}$ is only made of boundary interfaces, then  $n_{{\rm o}}\le d$, as there may be at most $d$ such interfaces, each made of a single $(d-1)$-dimensional simplex.
	The required estimate now follows by summing up the contributions from $\Gamma_T^{{\rm c}}$ and $\Gamma_T^{{\rm o}}$.
\end{proof}

	\begin{remark}
	The constant $\tinvT$ appearing in \eqref{eq:inverseF} accounts for the complexity of the mesh in terms of topology and shape, quantified by the number $n_{{\rm o}}$ of overlap required to cover the mesh skeleton, see the mesh shown in Figure~\ref{meshT} (right) for an illustrative example.
	In typical practical cases, e.g., meshes stemming from standard algorithms such as Voronoi tessellations, as well as shape-regular adaptively generated meshes, we expect to have $\Gamma_T\equiv \Gamma_T^{{\rm c}}$ for the vast majority of auxiliary elements.
	For instance, for the adaptively refined mesh with multiple hanging-nodes shown Figure~\ref{meshT} (left), $\Gamma_T^{{\rm o}}$ is either empty or it contains a single boundary edge, i.e., no overlaps are required, resulting in  the `ideal' constant $\tinvT=\Csh {{(q+1)(q+d)}}$ for each $T\in\amesh$.
	\end{remark}

\subsection{Discontinuous Galerkin method}

Let $\mathcal{V}:=\fes+H^1(\Omega)$. The symmetric interior penalty discontinuous Galerkin method
reads: find $u_h\in \fes$ such that
\begin{equation}\label{dg}
	\bform(u_h,v_h)=\ell(v_h) \qquad\text{for all }\quad v_h\in \fes,
\end{equation}
whereby $\bform(\cdot, \cdot ):\mathcal{V}\times \mathcal{V}\to \mathbb{R}$ is defined by
\begin{equation}\label{dg-bilinear_inconsistent}
	\begin{aligned}
		\bform(w,v) :=&  \int_\Omega   \diff\nabla_h w \cdot \nabla_h v \ud \uu{x}+\int_{\Gamma\backslash \Gamma_{\dn}}\sigma \jump{w} \jump{v}  \ud{s},  \\
		&-\int_{\Gamma\backslash \Gamma_{\dn}} ( \mean{a (\mbf{\Pi} \nabla w) \cdot \mbf{n}} \jump{v} + \mean{a (\mbf{\Pi}  \nabla v) \cdot \mbf{n}} \jump{w}  ) \ud{s},
	\end{aligned}
\end{equation}
for $w,v\in \mathcal{V}$, and $\ell(\cdot): \mathcal{V}\to \mathbb{R}$ by
\begin{equation*} \label{dg-linear-inconsistent}
	\begin{aligned}
		\ell(v)
		:= \int_{\Omega} f  v \ud \uu{x}
		-\int_{\Gamma_{\ddd}} g_{\rm D}^{} \big( (a (\mbf{\Pi} \nabla v) ) \cdot \mbf{n} -\sigma v \big)\ud s+
		\int_{\Gamma_{\dn}} g_{\dn} v \ud s,
	\end{aligned}
\end{equation*}
with $\mbf{\Pi}: [L_2(\Omega)]^d\to [\fes]^d$ denoting the orthogonal $L_2$-projection operator onto the (vectorial) finite element space, and $\sigma\in L_\infty(\Gamma\backslash \Gamma_{\dn})$ being the, so-called, \emph{discontinuity-penalization function} given by
\begin{equation}
	\sigma({ \uu{x}}) :=\left\{
	\begin{array}{ll}
		C_{\sigma}{\displaystyle   \max_{\k\in\{\k_i,\k_j\}}
			\Big\{  \frac{\bar{\diff}_\k  \tinv }{h_\k} \Big\} }, & ~~ \uu{x}\in  F \in  \Gamma_{\dint}, ~F \subseteq \partial\k_i\cap\partial\k_j, \\ \\
		C_{\sigma}  \displaystyle  \frac{\bar{\diff}_\k \tinv}{h_\k}, & ~~ \uu{x}\in  F \in  \Gamma_{\ddd}, ~F\subset\partial\k,
	\end{array}
	\right. \label{eq:penalty}
\end{equation}
with $C_\sigma$ a positive constant and $\bar{\diff}_\k:=|\sqrt{\diff}|_2^2|_\k$, $\el$; here $|\cdot|_2$ denotes the natural matrix-$l_2$-norm.
The known dependence of the penalty on the local polynomial degree is included in $\tinv$ for brevity; see \cite{DGpolybook,DGease} for details.
Note that, using \eqref{eq:prob} and that $\jump{v}= 0$ on $\Gamma\backslash\Gamma_{\dn}$ for all $v \in H^1_\ddd$, we have
$\bform(u,v) = \ell(v)$ for all $v \in H^1_\ddd$,
with $u\in H^1(\Omega)$ the solution to \eqref{eq:prob}.

\begin{remark}
		To avoid further notational overhead, we opted in exposing the main results for element-wise constant diffusion tensors, i.e., $\diff\in(\feso)^{d\times d}$, and for the classical interior penalty dG method. With minor modifications, the results below can also be extended to more general coefficients. Moreover, we expect that a corresponding analysis to what is presented below holds also for the interior penalty dG variants from \cite{GL_05,DG_22}.
\end{remark}

Upon defining the dG-norm by
$\ndg{v}:=\big(\ltwo{\sqrt{\diff} \nabla_h v}{}^2 +\ltwo{\sqrt{\sigma}
	\jump{v}}{\Gamma\backslash \Gamma_{\dn}}^2\big)^{1/2}$,
we have the following result.
\begin{lemma}\label{lem:coercivity}
	Under Assumption~\ref{A1}, there exists
	$C_\sigma>0$, such that
	\begin{equation}\label{coer_cont}
	\bform(v,v) \geq C_{\rm coer} \ndg{v}^2 \quad\text{and}\quad 	\bform(w,v)\le C_{\rm cont}\ndg{w}\,\ndg{v} \quad\text{for all}\quad v\in \mathcal{V},
	\end{equation}
	respectively,
	with $C_{\rm coer},C_{\rm cont}>0$, independent of $h$ of $p$, and $\el$.
\end{lemma}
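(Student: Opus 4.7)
The plan is to follow the classical recipe for symmetric interior penalty dG coercivity/continuity, the key point being to verify that the arguments still go through for polytopic meshes with arbitrarily small faces, thanks to the penalty scaling by $\tinv$ together with the element-based inverse trace estimate \eqref{eq:inverse_trace}. First I will isolate the consistency (cross) term. Writing out $B(v,v)$, the diffusion and jump contributions are already $\ndg{v}^2$, and the only task is to bound
\[
T(v):=2\int_{\Gamma\setminus\Gamma_{\dn}} \mean{a(\mbf{\Pi}\nabla v)\cdot\mbf{n}} \jump{v}\ud s
\]
from above by a fraction of $\ndg{v}^2$. I will apply Cauchy–Schwarz on each face, weighting by $\sigma^{-1/2}$ and $\sigma^{1/2}$, to get
$|T(v)|\le 2 \|\sigma^{-1/2}\mean{a(\mbf{\Pi}\nabla v)\cdot\mbf{n}}\|_{\Gamma\setminus\Gamma_{\dn}} \|\sqrt{\sigma}\jump{v}\|_{\Gamma\setminus\Gamma_{\dn}}$.

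The crucial step is then to estimate the weighted flux term elementwise. For an interior face $F=\partial K_i\cap\partial K_j$, the definition \eqref{eq:penalty} gives $\sigma|_F\ge C_\sigma \bar a_K\tinv/h_K$ for each $K\in\{K_i,K_j\}$, so the average can be split and estimated on each side separately:
\[
\int_F \sigma^{-1}\bigl|\mean{a(\mbf{\Pi}\nabla v)\cdot\mbf{n}}\bigr|^2\ud s \le \frac{1}{2C_\sigma}\sum_{K\in\{K_i,K_j\}} \frac{h_K}{\bar a_K \tinv}\,\|a\,\mbf{\Pi}\nabla v\|_{F\cap\partial K}^2,
\]
and analogously on Dirichlet boundary faces. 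Summing face-by-face rearranges into a sum over elements of $\|a\mbf{\Pi}\nabla v\|^2_{\partial K}$, at which point the polynomial inverse trace inequality \eqref{eq:inverse_trace} from Lemma~\ref{lem:trace} gives $\|a\mbf{\Pi}\nabla v\|_{\partial K}^2 \le \tinv h_K^{-1}\|a\mbf{\Pi}\nabla v\|_K^2$. The factor $\tinv/h_K$ cancels exactly with the $h_K/(\bar a_K\tinv)$ from $\sigma^{-1}$, leaving $(C_\sigma)^{-1}$ times a norm of $a\mbf{\Pi}\nabla v$ over $K$. Using that $a\in(\feso)^{d\times d}$ is elementwise constant, the matrix operator $\sqrt{a}$ commutes with the $L_2$-projection onto the polynomial space, i.e.\ $\sqrt{a}\,\mbf{\Pi}\nabla v=\mbf{\Pi}(\sqrt{a}\nabla v)$ on $K$, and $|a\mathbf{w}|^2\le \bar a_K|\sqrt{a}\mathbf{w}|^2$. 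Combining with the $L_2$-stability of $\mbf{\Pi}$ yields
\[
\|\sigma^{-1/2}\mean{a\mbf{\Pi}\nabla v\cdot\mbf{n}}\|_{\Gamma\setminus\Gamma_{\dn}}^2 \le \tfrac{1}{2C_\sigma}\,\|\sqrt{a}\nabla_h v\|_\Omega^2.
\]

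With this, a Young inequality $2ab\le\epsilon^{-1}a^2+\epsilon b^2$ with an appropriate $\epsilon\in(0,1)$ gives $|T(v)|\le (C_\sigma\epsilon)^{-1}\|\sqrt{a}\nabla_h v\|^2+\epsilon\|\sqrt\sigma\jump{v}\|^2_{\Gamma\setminus\Gamma_{\dn}}$, and choosing $\epsilon=\tfrac12$ and $C_\sigma$ sufficiently large (independent of $h$ and $p$) produces $B(v,v)\ge C_{\rm coer}\ndg{v}^2$ with $C_{\rm coer}=\min(1-2/C_\sigma,1/2)>0$. Continuity follows by the same Cauchy–Schwarz / face-decomposition argument applied to each cross term in $B(w,v)$, yielding $C_{\rm cont}=1+\sqrt{2/C_\sigma}$.

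The only place where the polytopic nature of the mesh could have caused trouble is the face-wise summation of flux contributions in the inverse trace step: an element $K$ may have arbitrarily many (and arbitrarily small) faces, so face-by-face inverse estimates would blow up. The key observation, which I expect to be the subtle point to flag, is that after splitting the average on each face by the per-element lower bound on $\sigma|_F$, one recovers the \emph{whole-boundary} norm $\|\cdot\|_{\partial K}^2$, which is controlled by the single elementwise inverse trace \eqref{eq:inverse_trace}, independently of how $\partial K$ is partitioned into faces. This makes the penalty choice \eqref{eq:penalty}, with its $\tinv=\Csh(p+1)(p+d)$ scaling, exactly what is needed for the proof to go through under Assumption~\ref{A1} alone.
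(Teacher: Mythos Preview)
Your argument is correct and is the standard symmetric interior penalty coercivity/continuity proof, with the key polytopic twist---reassembling the face contributions into the full $\partial K$ boundary so that the single element-wise inverse trace estimate \eqref{eq:inverse_trace} applies regardless of the number or size of faces---handled exactly as it should be. The paper itself does not give a proof of this lemma: it simply refers to \cite{DGpolybook,DGease} for the argument and the explicit choice of $C_\sigma$, and your write-up is essentially the proof one finds there (modulo harmless constant bookkeeping, e.g.\ the factor $\tfrac{1}{2C_\sigma}$ versus $\tfrac{1}{C_\sigma}$ on Dirichlet boundary faces, and the precise value of $C_{\rm cont}$).
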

We refer to \cite{DGpolybook,DGease} for the proof and the explicit definition of $C_\sigma$. \emph{A priori} error bounds are also available \cite{DGpolybook,DGease}.

\section{\emph{A posteriori} error analysis}\label{apost_sec}
The following analysis requires  Assumptions \ref{A1} and \ref{A2} and that an auxiliary mesh  according to Definition \ref{Delaunay} is given.

We decompose the error into two components:
\[
e=u-u_h=(u-u_c)+(u_c-u_h)=:e_c+e_d,
\]
whereby $u_c\in H^1(\Omega)$ is the recovery of the discrete solution $u_h\in \fes$, defined by
\begin{equation}\label{eq:ec}
	\bform(u_c,v)=\bform(u_h,v)\qquad \forall v\in H^1_{\ddd}(\Omega),
\end{equation}
and $u_c = g_{\ddd}$ on $\Gamma_{\ddd}$.
The existence and uniqueness of $u_c$ is guaranteed by the Lax-Milgram Lemma.
\begin{remark}
	The construction of $u_c$ is known in the theory of finite element methods and has been used in various contexts, e.g., in~\cite{MR3335498} for the design of equilibrated flux \emph{a posteriori} error estimators and in \cite{Smears2018} for the analysis of domain decomposition preconditioners. A crucial reason of using this recovery instead of the averaging operator as in \cite{KP}, is that it is essentially independent of the mesh geometry and topology; this is clearly helpful in the present context of very general polytopic meshes.
\end{remark}

\subsection{Bounding the non-conforming error $e_d$}

Inspired by~\cite{dari1996posteriori,Carstensen2002}, cf. also~\cite{Becker03,Cai11}, we decompose the nonconforming error $e_d=u_c-u_h$ further via a  Helmholtz decomposition.
\begin{lemma}
\label{lem:Hel}
	Given that $\Omega$ is simply connected, for any $\mbf{w}\in( L_2(\Omega))^d$, there exists $\xi\in H^1_\ddd (\Omega)$ and $\phi\in [H^1(\Omega)]^{2d-3}$, $d=2,3$, such that
	\begin{equation}\label{eq:helm}
	a{\bf w}=a\nabla\xi+\emph{\text{curl}}\,\phi\quad \text{in }\, \Omega,
	\end{equation}
 and $\phi$ can be chosen so that
	\begin{equation}\label{eq:Neum}
	\emph{\text{curl}}\,\phi \cdot \mbf{n}=0 \quad \text{on }\, \Gamma_{\dn}.
	\end{equation}
	Moreover, the following  relations hold
	\begin{equation}\label{eq:helm-norm}
	\ltwo{\sqrt{a}{\bf w}}{}^2=\ltwo{\sqrt{a}\nabla \xi}{}^2+\ltwo{a^{-1/2}\emph{\text{curl}}\phi}{}^2,
	\end{equation}
	and
	\begin{equation}\label{eq:curl norm}
	\ltwo{\nabla \phi}{}\leq C_{\Omega} \ltwo{\emph{\text{curl}}\phi}{},
	\end{equation}
	with a constant $C_{\Omega}>0$ only depending on $\Omega$.
\end{lemma}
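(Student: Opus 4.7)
\medskip

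\noindent\textbf{Proof proposal.} The plan is the classical Helmholtz/stream-function argument, tailored to the mixed Dirichlet--Neumann setting. First I would single out the gradient part of $\diff\mathbf{w}$ by defining $\xi\in H^1_\ddd(\Omega)$ as the unique Lax--Milgram solution of
\begin{equation*}
\int_\Omega \diff\,\nabla\xi\cdot\nabla v\,\ud\uu{x}
=\int_\Omega \diff\,\mathbf{w}\cdot\nabla v\,\ud\uu{x}
\qquad\forall v\in H^1_\ddd(\Omega),
\end{equation*}
which exists by uniform ellipticity~\eqref{uniform ellipticity}. Setting $\mathbf{q}:=\diff\mathbf{w}-\diff\nabla\xi\in[L_2(\Omega)]^d$, the variational identity rewrites as $\int_\Omega\mathbf{q}\cdot\nabla v=0$ for every $v\in H^1_\ddd(\Omega)$, which is precisely the weak statement that $\nabla\cdot\mathbf{q}=0$ in $\Omega$ and $\mathbf{q}\cdot\mathbf{n}=0$ on $\Gamma_{\dn}$ in the $H^{-1/2}(\Gamma_{\dn})$ sense.

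Next I would produce $\phi$ with $\text{curl}\,\phi=\mathbf{q}$. For $d=2$, since $\Omega$ is simply connected and $\nabla\cdot\mathbf{q}=0$, the line integral $\phi(\uu{x}):=\int_{\uu{x}_0}^{\uu{x}}(-q_2\,\ud s_1+q_1\,\ud s_2)$ is path-independent, defines $\phi\in H^1(\Omega)$ with $(-\partial_2\phi,\partial_1\phi)=\mathbf{q}$, and the vanishing normal trace $\mathbf{q}\cdot\mathbf{n}=0$ on $\Gamma_{\dn}$ translates into $\partial_\tau\phi=0$ on $\Gamma_{\dn}$, so $\text{curl}\,\phi\cdot\mathbf{n}=0$ there as required. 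For $d=3$, I would appeal to the standard vector potential result (see, e.g., Girault--Raviart or Amrouche--Bernardi--Dauge--Girault): a divergence-free $\mathbf{q}\in[L_2(\Omega)]^3$ on a simply connected Lipschitz domain admits $\phi\in[H^1(\Omega)]^3$ with $\text{curl}\,\phi=\mathbf{q}$, and one may impose the Coulomb gauge $\nabla\cdot\phi=0$ together with a tangential/normal boundary condition compatible with the partition $\partial\Omega=\overline{\Gamma_{\ddd}\cup\Gamma_{\dn}}$; the relation $\text{curl}\,\phi\cdot\mathbf{n}=\mathbf{q}\cdot\mathbf{n}=0$ on $\Gamma_{\dn}$ follows by construction.

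The orthogonality leading to~\eqref{eq:helm-norm} is then immediate: integration by parts yields
\begin{equation*}
\int_\Omega \nabla\xi\cdot\text{curl}\,\phi\,\ud\uu{x}
= -\int_\Omega \xi\,\nabla\cdot\text{curl}\,\phi\,\ud\uu{x}
+\int_{\partial\Omega}\xi\,(\text{curl}\,\phi\cdot\mathbf{n})\,\ud s = 0,
\end{equation*}
because $\nabla\cdot\text{curl}\,\phi\equiv 0$, $\xi=0$ on $\Gamma_{\ddd}$, and $\text{curl}\,\phi\cdot\mathbf{n}=0$ on $\Gamma_{\dn}$. Expanding $\ltwo{\sqrt{\diff}\mathbf{w}}{}^2=\int_\Omega \diff^{-1}(\diff\nabla\xi+\text{curl}\,\phi)\cdot(\diff\nabla\xi+\text{curl}\,\phi)\,\ud\uu{x}$ and using this orthogonality produces~\eqref{eq:helm-norm}.

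The estimate~\eqref{eq:curl norm} is the main obstacle and is where the simple-connectedness of $\Omega$ plays its role. In two dimensions the constant is trivially $C_\Omega=1$ since $|\nabla\phi|^2=(\partial_1\phi)^2+(\partial_2\phi)^2=|\text{curl}\,\phi|^2$ pointwise once $\phi$ is normalized (e.g.\ by fixing a constant on a connected component of $\Gamma_{\ddd}$, using $|\Gamma_{\ddd}|\neq 0$, to make the Poincar\'e inequality irrelevant). In three dimensions I would invoke the classical Friedrichs-type inequality for vector potentials on simply connected Lipschitz domains: under the Coulomb gauge $\nabla\cdot\phi=0$ and the chosen boundary condition for $\phi$, one has $\ltwo{\nabla\phi}{}\le C_\Omega\bigl(\ltwo{\text{curl}\,\phi}{}+\ltwo{\nabla\cdot\phi}{}\bigr)=C_\Omega\ltwo{\text{curl}\,\phi}{}$, where the constant depends only on $\Omega$. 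Combined with the previous two paragraphs this completes all four claims of the lemma.
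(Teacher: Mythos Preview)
Your argument is correct and follows the standard route that the paper merely cites: you construct $\xi$ as the $a$-weighted orthogonal projection of $\mathbf{w}$ onto gradients (Lax--Milgram), recognise the remainder as divergence-free with vanishing normal trace on $\Gamma_{\dn}$, lift it to a stream function/vector potential, and then read off the orthogonality and the Friedrichs-type bound. This is exactly the content of the references \cite{dari1996posteriori,Carstensen2002} that the paper defers to, and your observation $C_\Omega=1$ for $d=2$ matches the paper's.

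Two minor points. First, the orthogonality step can be shortened: taking $v=\xi$ directly in your defining variational identity for $\xi$ already gives $\int_\Omega \text{curl}\,\phi\cdot\nabla\xi=\int_\Omega \mathbf{q}\cdot\nabla\xi=0$, bypassing the integration-by-parts and the need to make sense of boundary traces of $\text{curl}\,\phi\cdot\mathbf{n}$ separately on $\Gamma_{\ddd}$ and $\Gamma_{\dn}$. Second, in three dimensions your phrase ``a tangential/normal boundary condition compatible with the partition'' is more than is required for the lemma as stated: you only need $\text{curl}\,\phi\cdot\mathbf{n}=0$ on $\Gamma_{\dn}$, which is inherited from $\mathbf{q}\cdot\mathbf{n}=0$, while the standard vector-potential construction (with $\nabla\cdot\phi=0$ and, e.g., $\phi\cdot\mathbf{n}=0$ on all of $\partial\Omega$) already supplies $\phi\in[H^1(\Omega)]^3$ together with the Friedrichs bound $\|\nabla\phi\|\le C_\Omega\|\text{curl}\,\phi\|$.
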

\begin{proof}
	The  proof of~\eqref{eq:helm} is given in~\cite[Theorem 3.1]{dari1996posteriori} for the $d=2$ case and is extended to $d=3$ in~\cite{Carstensen2002}.  Since  $\nabla \xi$ is orthogonal to ${\text{curl}}\,\phi$, from the symmetry of the diffusion tensor $a$, the orthogonality \eqref{eq:helm-norm} follows immediately. Finally, the proof of~\eqref{eq:curl norm}  can be found in \cite{Carstensen2002}; for $d = 2$, we have $C_\Omega = 1$.
\end{proof}
\begin{remark}
\label{rem:Hel}
The Helmholtz decomposition can be generalised to multiply connected domains~\cite{GR}. However, concerning the validity in this setting of the relation~\eqref{eq:curl norm}, which is fundamental to our analysis, we are only aware of the recent preprint~\cite{bertrand2022stabilization}.
For this reason, we prefer to limit the current analysis to the simply connected setting leaving possible extensions to future work.
\end{remark}.

 Condition \eqref{eq:Neum} imposes a constraint on $\Gamma_{\dn}$ for $\phi \in [H^1(\Omega)]^{2d-3}$. Namely,  $\text{curl}\,\phi \cdot \mbf{n}_F=0$ on $F\subset  \Gamma_{\dn}$, implying that  $\phi \in [H^1(\Omega)]^{3}$  has constant components on each $(d-1)$-dimensional planar subset of $\Gamma_\dn$.

We apply the Helmholtz decomposition with $\mbf{w}=\nabla_h e_d$. Hence $\xi\in H^1_\ddd(\Omega)$ and $\phi\in [H^1(\Omega)]^{2d-3}$ are such that $a\nabla_h e_d=a\nabla\xi+\text{curl}\,\phi$, and we have
\begin{align} \label{nonconforming error}
		 \ltwo{\sqrt{\diff} \nabla_h e_d}{}^2
		=& \int_\Omega a \nabla_h e_d\cdot\nabla\xi  \ud \uu{x}+\int_\Omega \nabla_h e_d\cdot \text{curl}\,\phi \ud \uu{x}.
\end{align}

Since $u_c\in H^1(\Omega)$ with $u_c|_{\Gamma_{\ddd}} = g_{\ddd}^{}$ and $\xi\in H^1_\ddd(\Omega)$,
\eqref{eq:ec} implies
\[
\int_\Omega a\nabla_h e_d\cdot\nabla\xi  \ud \uu{x} =
-\int_{\Gamma_\dint}  \mean{a (\mbf{\Pi}  \nabla \xi) \cdot \mbf{n}} \jump{u_h}  \ud{s}
-\int_{\Gamma_\ddd} (a (\mbf{\Pi}  \nabla \xi) \cdot \mbf{n}) (u_h-g_\ddd)  \ud{s}.
\]
Hence, using the Cauchy-Schwarz inequality,  the trace inverse estimate \eqref{eq:inverse_trace},
the definition of $\sigma$, and the orthogonality \eqref{eq:helm-norm}, we have, respectively,
\begin{align}\label{eq:nonc_first_term}
& \int_\Omega a \nabla_h e_d\cdot\nabla\xi  \ud \uu{x}\nno \\
& \le
\ltwo{\sigma^{-1/2}   \mean{a (\mbf{\Pi}  \nabla \xi) }}{\Gamma\backslash\Gamma_{\dn}}
\Big(\ltwo{\sqrt{\sigma}
	\jump{u_h}}{\Gamma_{\dint}}^2
+\ltwo{\sqrt{\sigma}
	(u_h-g_\ddd)}{\Gamma_{\ddd}}^2 \Big)^{1/2}  \nno \\
& \le
\Big({C_\sigma}^{-1}  \su
\ltwo{ { \mbf{\Pi} (\sqrt{a} \nabla \xi )}}{\k}^2
\Big)^{1/2}
\Big( \ltwo{\sqrt{\sigma}
	\jump{u_h}}{\Gamma_{\dint}}^2
+\ltwo{\sqrt{\sigma}
	(u_h-g_\ddd)}{\Gamma_{\ddd}}^2 \Big)^{1/2}   \\
& \le
(C_\sigma)^{-1/2}
\ltwo{ {  \sqrt{a} \nabla \xi }}{}
\Big( \ltwo{\sqrt{\sigma}
	\jump{u_h}}{\Gamma_{\dint}}^2
+\ltwo{\sqrt{\sigma}
	(u_h-g_\ddd)}{\Gamma_{\ddd}}^2 \Big)^{1/2}  \nno \\
& \le
\ltwo{ {  \sqrt{a} \nabla_h e_d }}{}
\Big( \ltwo{\sqrt{\sigma C_\sigma^{-1}}
	\jump{u_h}}{\Gamma_{\dint}}^2
+\ltwo{\sqrt{\sigma C_\sigma^{-1}}
	(u_h-g_\ddd)}{\Gamma_{\ddd}}^2 \Big)^{1/2}  \nno .
\end{align}
To bound the second term on the right-hand side of~\eqref{nonconforming error},  we first decompose
\begin{equation}\label{non-conforming-relation}
\begin{aligned}
	&\int_\Omega \nabla_h e_d\cdot \text{curl}\,\phi \ud \uu{x}
	=\int_\Omega \nabla_h e_d\cdot \text{curl}(\phi-I_h\phi) \ud \uu{x}
	+\int_\Omega \nabla_h e_d\cdot \text{curl}\,I_h \phi \ud \uu{x},
\end{aligned}
\end{equation}
with $I_h $ the (component-wise if $d=3$) quasi-interpolation operator of Lemma \ref{lem:interp}.

Starting with the first term, observing that  $ \text{curl}\nabla u_c =\mbf{0}$ and using the fact that $\phi\in [H^1(\Omega)]^{2d-3}$ satisfying  \eqref{eq:Neum}, implying that $\phi$ is a constant function  on each planar section of $\Gamma_{\dn}$, and choosing $\phi=I_h\phi$ on $\Gamma_{\dn}$. Then we have
$$
	\int_\Omega \nabla u_c\cdot \text{curl}(\phi-I_h\phi) \ud \uu{x}
	= \int_{\Gamma_\ddd}   (\phi -I_h \phi)\tnabla g_\ddd  \ud{s} . \nno
$$
Applying integration by parts, observing that  $ \text{curl}\nabla e_d|_\k=\mbf{0}$, and using $(\phi - I_h\phi)$ is single valued on each face, and $(\phi - I_h\phi)_{\Gamma_{\dn}}=0$, yields
\begin{equation}\label{eq:nonc_second_term_I}
\begin{aligned}
	&\sum_{\k\in\mesh}
	\int_\k \nabla e_d\cdot \text{curl}(\phi-I_h\phi) \ud \uu{x}
	\\
	=&\ -\int_{\Gamma} (\phi - I_h\phi) \jump{ \tnabla u_h}\ud{s}
	+\int_{\Gamma_\ddd}   (\phi -I_h \phi)\tnabla g_\ddd  \ud{s}
	\\
	=&\ -\int_{\Gamma_\dint} (\phi -I_h \phi)\jump{\tnabla u_h}\ud{s}
	-\int_{\Gamma_\ddd} (\phi -I_h \phi)\tnabla (u_h-g_\ddd)\ud{s}.
\end{aligned}
\end{equation}

Further, using~\eqref{eq:interpF} and, finally,  \eqref{eq:curl norm} and \eqref{eq:helm-norm}, the right-hand side of \eqref{eq:nonc_second_term_I} can be further estimated from above by
\begin{equation}
\begin{aligned} \label{eq:nonc_second_term_II}
	&  \su
	\ltwo{h_\k^{-1/2}(\phi -I_h \phi)}{\partial \k \backslash \Gamma_\dn}
	\left(\ltwo{\sqrt{h}_\k\jump{\tnabla u_h}}{\partial\k\cap \Gamma_{\dint}}+ \ltwo{\sqrt{h}_\k\tnabla (u_h-g_\ddd)}{\partial \k\cap \Gamma_{\ddd}} \right) \\
	& \leq \big( \su \trineq C_{I} \ltwo{ \nabla \phi }{\widehat{\omega}_{\k} }^2 \big)^{1/2} \big(  \ltwo{\sqrt{h}\jump{\tnabla u_h}}{\Gamma_{\dint}}^2
	+\ltwo{\sqrt{h}\tnabla (u_h-g_\ddd)}{\Gamma_{\ddd}}^2 \big)^{1/2}
	\\
	& \leq C_1  \ltwo{ \nabla \phi }{\Omega } \big(  \ltwo{\sqrt{h}\jump{\tnabla u_h}}{\Gamma_{\dint}}^2
	+\ltwo{\sqrt{h}\tnabla (u_h-g_\ddd)}{\Gamma_{\ddd}}^2 \big)^{1/2}
	   \\
	& \leq   C_1 C_\Omega \sqrt{\alpha^*}\ltwo{\sqrt{\diff}\nabla_h e_d}{}
	\big(  \ltwo{\sqrt{h}\jump{\tnabla u_h}}{\Gamma_{\dint}}^2
	+\ltwo{\sqrt{h}\tnabla (u_h-g_\ddd)}{\Gamma_{\ddd}}^2 \big)^{1/2},
\end{aligned}
\end{equation}
for $C_1$ a constant depending only on  the shape-regularity constant $\CshT$ of the  auxiliary mesh $\amesh$, and on the local quasi-uniformity constants $\rho$ and $\hat{\rho}$.

We now consider the second term in~\eqref{non-conforming-relation}. Since $I_h\phi\in [H^1(\Omega)]^{2d-3}$, we have $\nabla\cdot\text{curl}\, I_h\phi=0$ on $\Omega$ and, hence,  $\jump{\text{curl} \,I_h \phi \cdot \mbf{n}}=0$ on $\Gamma_\dint$. Moreover, given that  $I_h \phi$ is constant on each component of $\Gamma_\dn$, we also have ${\text{curl} \,I_h \phi \cdot \mbf{n}}=0$ on $\Gamma_\dn$.
Then, integration by parts and working as above gives
\begin{align}
	\int_\Omega \nabla_h e_d\cdot \text{curl}\, I_h \phi \ud \uu{x}
	=& \sum_{\k\in\mesh}
	\int_{\partial\k}e_d\mbf{n}_\k \cdot \text{curl}\, I_h \phi   \ud{s} & \nno \\
	= & -\int_{\Gamma_\dint} \jump{u_h} (\mbf{n} \cdot \text{curl}\, I_h \phi ) \ud{s}
	-\int_{\Gamma_\ddd} (u_h-g_\ddd) (\mbf{n} \cdot \text{curl}\, I_h \phi ) \ud{s}.
	\nno
\end{align}
Next, applying to $I_h \phi$ the trace inverse  inequality with respect to the auxiliary mesh $\amesh$ given in~\eqref{eq:inverseF}, we obtain
\[
\begin{aligned}
	&\int_\Omega \nabla_h e_d\cdot \text{curl}\, I_h \phi \ud \uu{x} \\
	&  \leq
	\big( \ltwo{\sqrt{\sigma}\jump{u_h}}{\Gamma_\dint}^2
	+ \ltwo{\sqrt{\sigma}(u_h-g_\ddd)}{\Gamma_\ddd}^2 \big)^{1/2}
	\Big(\sum_{T\in \amesh }
	\ltwo{{\sigma^{-1/2}}\text{curl}\, I_h \phi}{ T\cap(\Gamma\backslash \Gamma_\dn) }^2  \Big)^{1/2}  \\
	& \leq
	\big( \ltwo{\sqrt{\sigma}\jump{u_h}}{\Gamma_\dint}^2
	+ \ltwo{\sqrt{\sigma}(u_h-g_\ddd)}{\Gamma_\ddd}^2 \big)^{1/2} \Big(\sum_{T\in \amesh }  {\tinvT h_T^{-1}} \ltwo{{\sigma^{-1/2}}\text{curl}\, I_h \phi}{T}^2   \Big)^{1/2}  \\
	&\leq
	C_2
	\big( \ltwo{\sqrt{\sigma C_\sigma^{-1}}\jump{u_h}}{\Gamma_\dint}^2
	+ \ltwo{\sqrt{\sigma C_\sigma^{-1}}(u_h-g_\ddd)}{\Gamma_\ddd}^2 \big)^{1/2}
	\ltwo{\text{curl}\, I_h \phi}{},
\end{aligned}
\]
for $C_2>0$ constant depending on $\tinv$, $\tinvT$, $\widehat{\rho}$, and on $\alpha^*$.
Next, we use the stability of $I_h$ from Lemma~\ref{lem:interp}, together with~\eqref{eq:curl norm} to deduce
\begin{equation}\label{eq:nonc_second_term_III}
	\ltwo{\text{curl}\, I_h \phi}{\Omega}
	\leq
	C_c
	\ltwo{\nabla \phi}{\Omega}  \le
	C_c C_\Omega \sqrt{\alpha_*}
	\ltwo{a^{-1/2}\nabla \phi}{} \leq     C_c C_\Omega \sqrt{\alpha_*}
	\ltwo{\sqrt{\diff}\nabla_h e_d}{}.
\end{equation}
Hence, combing \eqref{eq:nonc_first_term}, \eqref{eq:nonc_second_term_I}, \eqref{eq:nonc_second_term_II}, and \eqref{eq:nonc_second_term_III}, we arrive at the bound
\begin{align}\label{nonconforming bound}
	\ltwo{\sqrt{\diff} \nabla_h e_d}{}^2
	&\leq
	\Cnc \Big(  \ltwo{\sqrt{\sigma C_\sigma^{-1}}\jump{u_h}}{\Gamma_\dint}^2
	+ \ltwo{\sqrt{\sigma C_\sigma^{-1}}(u_h-g_\ddd)}{\Gamma_\ddd}^2 \nno \\
	&\hspace{1cm}+ \ltwo{\sqrt{h}\jump{\tnabla u_h}}{\Gamma_{\dint}}^2
	+\ltwo{\sqrt{h}\tnabla (u_h-g_\ddd)}{\Gamma_{\ddd}}^2\Big);
\end{align}
the constant $\Cnc>0$ depends on  $\tinv$, $\tinvT$, $\widehat{\rho}$, $C_\Omega$, and on $\alpha^*$, $\alpha_*$, but is independent from $h$ and the number and measure of the mesh faces.

\begin{remark}\label{KP_noKP}
	We stress that the mesh-size $h$ in \eqref{nonconforming bound} is the local element diameter for $\mesh$, i.e.,\emph{independent} of the measure of the faces and the number of faces per element.
	This new bound refines the, now classical, results in~\cite{KP}, by showing that the dG error has, in fact, two sources: the normal flux and the tangential gradient. By applying the inverse inequality on each face $F$, the $L_2$-norm of the tangential jump can be bounded from above by the $L_2$-norm of the jump term itself, thus recovering the bound in~\cite{KP}. However, such bound would be proportional to $\diam(K)/\diam(F)$, for each face $F\in \Gamma$, which may be severely pessimistic for increasingly small faces $F$.
\end{remark}

\subsection{Bounding the conforming error $e_c$}
 For $v\in H^1_\ddd$, we have
\begin{equation} \label{eq:error equation}
\begin{aligned}
\bform(e,v)&=\ell(v)-B(u_h,v)
= \ell(\eta)-\bform(u_h,\eta),
\end{aligned}
\end{equation}
with $\eta:=v-v_h$,  for any $ v_h\in \fes$. Recalling that $e=e_c+e_d$, since $e_c\in H^1_\ddd$ we can fix $v=e_c$ in~\eqref{eq:error equation} to further deduce
\begin{equation}\label{eq:ec_error}
\ltwo{\sqrt{a}\nabla e_c}{}^2=\bform(e_c,e_c)=(\ell(\eta)-\bform(u_h,\eta))-\bform(e_d,e_c)=\ell(\eta)-\bform(u_h,\eta),
\end{equation}
from~\eqref{eq:ec}.  The right-hand side of \eqref{eq:ec_error} can now be bounded via standard arguments \cite{KP}: integration by parts, application of \cite[Eq.~(3.3)]{unified}, the observation that $\mbf{\Pi}\nabla u_h = \nabla u_h$, and elementary manipulations yield
\begin{align}\label{conformoing_bound}
\hspace{0cm}\ell(\eta)-\bform(u_h,\eta)
&= \int_{\Omega} (f+\nabla_h \cdot (a\nabla_h u_h))  \eta \ud \uu{x}
-\int_{\Gamma_{\dint}}  \jump{a \nabla u_h \cdot \mbf{n}} \mean{\eta}
\ud{s}
\nno \\
&\hspace{-.5cm}
-\int_{\Gamma_{\dn}} (a\nabla u_h \cdot \mbf{n}-  g_{\dn}) \eta \ud s
-\int_{\Gamma_{\dint}}   \sigma \jump{u_h} \jump{\eta} \ud{s}
-\int_{\Gamma_{\ddd}}  \sigma (u_h - g_{\rm D}^{} ) \eta \ud s
\nno \\
& \hspace{-.5cm}
+\int_{\Gamma_{\dint}}  \mean{a (\mbf{\Pi}  \nabla \eta) \cdot \mbf{n}} \jump{u_h}  \ud{s}
+\int_{\Gamma_{\ddd}} \big( a (\mbf{\Pi} \nabla \eta)  \cdot \mbf{n}\big) (u_h - g_{\rm D}^{} )\ud s
=: \sum_{i=1}^7 T_i.
\end{align}
Setting $\eta = e_c-\Pi_0 e_c$ and using \eqref{eq:PF}, we have
\begin{equation} \label{comfirming relation1}
T_1
 \leq C_{PF}\alpha_*^{-1/2} \Big( \sum_{\k\in \mesh} \ltwo{h_\k(f+\nabla \cdot  (\diff\nabla u_h)}{\k}^2 \Big)^{1/2}\ltwo{\sqrt{a}\nabla e_c}.
\end{equation}
Employing \eqref{eq:PF_trace}, along with standard manipulations, we also have
\begin{align} \label{comfirming relation2}
T_2+T_3& \leq   \su \sum_{F \subset \partial \k\cap \Gamma_{\dint}}  \ltwo{h_\k^{-1/2}\eta}{F}
\ltwo{\sqrt{h}_\k\jump{a \nabla u_h \cdot \mbf{n}}  }{F}
\nno \\
& +\su  \sum_{F \subset \partial \k\cap \Gamma_{\dn}}   \ltwo{h_\k^{-1/2}\eta}{F}
\ltwo{\sqrt{h}_\k(a\nabla u_h \cdot \mbf{n}-  g_{\dn})}{F}   \\
& \hspace{-1cm} \leq \tilde{C}_{PF}\alpha_*^{-1/2}\ltwo{\sqrt{a}\nabla e_c}{}\Big( \ltwo{\sqrt{h}\jump{\diff\nabla u_h\cdot \mbf{n}}}{ \Gamma_{\dint}}^2
+  \ltwo{\sqrt{h}(\diff\nabla u_h\cdot \mbf{n}-g_\dn) }{ \Gamma_{\dn}}^2
\Big)^{1/2}. \nno
\end{align}
Similarly, using the definition of $\sigma$ from \eqref{eq:penalty},  we have
\begin{align} \label{comfirming relation3}
T_4+T_5& \leq   \big( \ltwo{\sqrt{\sigma}\jump{u_h}}{\Gamma_\dint}^2
+ \ltwo{\sqrt{\sigma}(u_h-g_\ddd)}{\Gamma_\ddd}^2 \big)^{1/2} \ltwo{{\sigma^{1/2}} \jump{\eta}}{\Gamma \backslash \Gamma_{\dn}}
\nno \\
& \leq
\Big(\su  \max_{F\in \partial \k \backslash \Gamma_{\dn}} \sigma
\ltwo{\eta}{\partial \k \backslash \Gamma_{\dn}}^2 \Big)^{1/2}
\big( \ltwo{\sqrt{\sigma}\jump{u_h}}{\Gamma_\dint}^2
+ \ltwo{{\sqrt\sigma}(u_h-g_\ddd)}{\Gamma_\ddd}^2 \big)^{1/2} \\
& \hspace{-1cm}\leq
\sqrt{C_{\sigma}\alpha^*/\alpha_*}\tinv \tilde{C}_{PF} \rho
\ltwo{\sqrt{a}\nabla e_c}{}
\big( \ltwo{{\sigma}^{1/2}\jump{u_h}}{\Gamma_\dint}^2
+ \ltwo{\sqrt{\sigma}(u_h-g_\ddd)}{\Gamma_\ddd}^2 \big)^{1/2}.   \nno
\end{align}
Now, using the trace inverse estimate \eqref{eq:inverse_trace}, the  stability of the $L_2$-projection operator and that $\nabla \eta|_\k = \nabla e_c|_\k$, we deduce
\begin{align} \label{comfirming relation4}
T_6+T_7& \leq   \big( \ltwo{\sqrt{\sigma}\jump{u_h}}{\Gamma_\dint}^2
+ \ltwo{\sqrt{\sigma}(u_h-g_\ddd)}{\Gamma_\ddd}^2 \big)^{1/2} \ltwo{{\sigma^{-1/2}} \mean{a (\mbf{\Pi}  \nabla e_c) \cdot \mbf{n}} }{\Gamma \backslash \Gamma_{\dn}}
\nno \\
&  \hspace{-0cm}\leq
\Big(\su   \max_{F\in \partial \k \backslash \Gamma_{\dn}} \sigma^{-1}   \ltwo{\sqrt{a} (\mbf{\Pi}  (\sqrt{a}\nabla e_c)) \cdot \mbf{n}}{ F}^2 \Big)^{1/2} \nno \\
& \hspace{1cm}\times \big( \ltwo{\sqrt{\sigma}\jump{u_h}}{\Gamma_\dint}^2
+ \ltwo{\sqrt{\sigma}(u_h-g_\ddd)}{\Gamma_\ddd}^2 \big)^{1/2}  \\
& \hspace{0cm}\leq
C_\sigma ^{-1/2}\rho
\ltwo{\sqrt{a}\nabla e_c}{}
\big( \ltwo{\sqrt{\sigma}\jump{u_h}}{\Gamma_\dint}^2
+ \ltwo{\sqrt{\sigma}(u_h-g_\ddd)}{\Gamma_\ddd}^2 \big)^{1/2}.   \nno
\end{align}
Hence, by collecting above bounds \eqref{comfirming relation1}, \eqref{comfirming relation2}, \eqref{comfirming relation3}, \eqref{comfirming relation4} and \eqref{conformoing_bound}, we arrive at the following bound on the conforming error:
\begin{align}\label{eq:conf}
\ltwo{\sqrt{a}\nabla e_c}{}
& \hspace{0cm} \leq
\Cco \Big(
\ltwo{h(f+\nabla_h \cdot  (\diff\nabla_h u_h)}{}^2
+ \ltwo{\sqrt{\sigma}\jump{u_h}}{\Gamma_\dint}^2
+ \ltwo{\sqrt{\sigma}(u_h-g_\ddd)}{\Gamma_\ddd}^2  \nno \\
& \hspace{1cm}+  \ltwo{\sqrt{h}\jump{\diff\nabla u_h\cdot \mbf{n}}}{ \Gamma_{\dint}}^2
+  \ltwo{\sqrt{h}(\diff\nabla u_h\cdot \mbf{n}-g_\dn) }{ \Gamma_{\dn}}^2
\Big)^{1/2},
\end{align}
with $\Cco$ depending on  $C_\sigma$, $\rho$, $\Csh$,  the polynomial degree $p$, $C_{PF}$, and $\tilde{C}_{PF}$, but independent of $h$ and the number and measure of the elemental faces.

We are now ready to present the \emph{a posteriori} error upper bound.

\begin{theorem}[upper bound]\label{thm:upperbound}
	Let $u$ be the solution of \eqref{Problem} and let $u_h\in \fes$ be its dG approximation on a polytopic mesh satisfying Assumptions \ref{A1} and \ref{A2}. Also let an auxiliary mesh  according to Definition \ref{Delaunay} is given. Then, we have the following a posteriori error bound
	\begin{align}\label{full bound}
	\ndg{u-u_h}^2
	&\leq
	C_{\rm up}  \su (R_\k^2 + O_\k^2),
	\end{align}
	with the local estimator  $ R_\k^2 = R_{\k,E}^2+R_{\k,N}^2+R_{\k,J}^2+R_{\k,T}^2$, and the data oscillation  $ O_\k^2 = O_{\k,E}^2+O_{\k,N}^2+O_{\k,J}^2+O_{\k,T}^2$, given by
	\begin{align*}\label{each term}
	& R_{\k,E}:= \ltwo{h(\Pi f+\nabla \cdot  (\diff\nabla u_h))}{\k},
	\\
	& R_{\k,N}:= \big( \ltwo{\sqrt{h}\jump{\diff\nabla u_h\cdot \mbf{n}}}{ \partial \k \cap  \Gamma_{\dint}}^2
	+  \ltwo{\sqrt{h}(\diff\nabla u_h\cdot \mbf{n}-\bar{g}_\dn) }{\partial \k \cap   \Gamma_{\dn}}^2\big)^{1/2} , \\
	& R_{\k,J} := \big(\ltwo{\sqrt{\sigma}\jump{u_h}}{\partial \k \cap   \Gamma_\dint}^2
	+ \ltwo{\sqrt{\sigma}(u_h-  \bar{g}_\ddd)}{\partial \k \cap \Gamma_\ddd}^2\big)^{1/2}, \\
	&R_{\k,T}:=   \big(\ltwo{\sqrt{h}\jump{\tnabla u_h}}{\partial \k \cap \Gamma_{\dint}}^2
	+\ltwo{\sqrt{h}\tnabla (u_h-\bar{g}_\ddd)}{\partial \k \cap \Gamma_{\ddd}}^2\big)^{1/2}, \\
	&
	O_{\k,E} := \ltwo{h(f - \Pi f)}{\k},
	\quad
	O_{\k,N}:=
	\ltwo{\sqrt{h}(g_\dn-\bar{g}_\dn) }{\partial \k \cap   \Gamma_{\dn}},
	\\
	&O_{\k,J} := \ltwo{\sqrt{\sigma}(g_\ddd- \bar{ g}_\ddd)}{\partial \k \cap \Gamma_\ddd},
	\quad
	O_{\k,T} := \ltwo{\sqrt{h}\tnabla (g_\ddd- \bar{ g}_\ddd)}{\Gamma_{\ddd}},
	\end{align*}
	with $C_{\rm up}$ depending on $\Cco$ and $\Cnc$ only, but is independent of $h$ and of the number and measure of the elemental faces; here, for any for $ \k \in \mesh$, such that $\partial \k \cap \Gamma_{S} \neq  \emptyset$ with $S\in\{D,N\}$,  we set $\bar{g}_S^{}|_{\partial \k\cap \Gamma_{S}}^{} \in \mathcal{P}_{p_k}(\partial \k\cap \Gamma_{S})$ with $g_S$ denoting an approximation of the Dirichlet and Neumann data, respectively.
\end{theorem}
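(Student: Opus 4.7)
The plan is to assemble the upper bound from the two main estimates already derived: the bound \eqref{nonconforming bound} on the non-conforming part $e_d$ and the bound \eqref{eq:conf} on the conforming part $e_c$. First I would expand the dG norm on the error as
\[
\ndg{u-u_h}^2 = \ltwo{\sqrt{a}\nabla_h(u-u_h)}{}^2 + \ltwo{\sqrt{\sigma}\jump{u-u_h}}{\Gamma\setminus\Gamma_{\dn}}^2.
\]
The jump term is straightforward: since $u\in H^1(\Omega)$ with $u=g_{\ddd}^{}$ on $\Gamma_{\ddd}^{}$, the trace of $u$ is single-valued on $\Gamma_{\dint}$ and equals $g_{\ddd}^{}$ on $\Gamma_{\ddd}^{}$, so $\jump{u-u_h}=-\jump{u_h}$ on $\Gamma_{\dint}$ and $\jump{u-u_h}=g_{\ddd}^{}-u_h$ on $\Gamma_{\ddd}^{}$. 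A simple add-and-subtract of $\bar{g}_{\ddd}^{}$ followed by the triangle inequality then splits this contribution into $R_{\k,J}$ and the oscillation $O_{\k,J}$.

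Next I would tackle the gradient term via the splitting $u-u_h=e_c+e_d$ and the elementary inequality
\[
\ltwo{\sqrt{a}\nabla_h(u-u_h)}{}^2 \le 2 \ltwo{\sqrt{a}\nabla e_c}{}^2 + 2\ltwo{\sqrt{a}\nabla_h e_d}{}^2,
\]
and then apply directly the two a priori derived bounds \eqref{eq:conf} and \eqref{nonconforming bound}. At this point the right-hand side already contains all the residual quantities appearing in the theorem except that $f$ and the boundary data $g_{\ddd}^{}$, $g_{\dn}^{}$ appear in their exact form. To produce the stated estimator/oscillation split, I would insert $\pm\Pi f$ into the element residual term (triggering the $R_{\k,E}$/$O_{\k,E}$ separation via the triangle inequality), $\pm\bar{g}_{\dn}^{}$ into the Neumann flux residual (producing $R_{\k,N}$ and $O_{\k,N}$), and $\pm\bar{g}_{\ddd}^{}$ into the Dirichlet jump and its tangential gradient (producing $R_{\k,J}$, $R_{\k,T}$ and their respective oscillations). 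Each of these steps is a direct application of the triangle inequality; the use of local $L_2$-projections $\bar{g}_{\ddd}^{}$, $\bar{g}_{\dn}^{}$ on element boundary pieces ensures the oscillation terms are localised to the correct elements.

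Finally, I would localise each $L_2$-norm along the skeleton to the faces of a single element $\k$: this is immediate for the bulk, Dirichlet and Neumann contributions, while for interior face terms involving jumps I would either assign each face to one of its two neighbouring elements (the constant $C_{\rm up}$ absorbs the factor two) or share the contribution symmetrically using Assumption~\ref{A2}. Collecting these localised squared quantities into the sum $\sum_{\el}(R_{\k}^2+O_{\k}^2)$, combining the constants $\Cnc$ and $\Cco$ into a single $C_{\rm up}$, yields \eqref{full bound}.

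I do not foresee a genuine obstacle beyond careful bookkeeping, since all the analytically substantive steps (the Helmholtz decomposition argument for $e_d$, the Galerkin-type manipulation for $e_c$ using the quasi-interpolant, and the trace/inverse machinery dealing with small faces) were already carried out in the derivations of \eqref{nonconforming bound} and \eqref{eq:conf}. The only mildly delicate point is ensuring that the oscillation split and the localisation preserve the independence of $C_{\rm up}$ from the number and size of elemental faces; this is automatic once the data approximations $\bar g_{\ddd}^{},\bar g_{\dn}^{}$ are taken element-wise (rather than face-wise) so that the resulting oscillation norms are supported on full elemental boundary pieces and the triangle-inequality constants do not depend on the cardinality of the face decomposition.
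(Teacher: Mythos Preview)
Your proposal is correct and follows essentially the same route as the paper: combine the already-established bounds \eqref{nonconforming bound} and \eqref{eq:conf} on $e_d$ and $e_c$, then use the triangle inequality and add-and-subtract the data approximations to separate residuals from oscillation. The paper states this even more tersely (invoking only the triangle inequality $\ndg{u-u_h}\le \ndg{e_c}+\ndg{e_d}$), but your slightly more explicit treatment of the jump term and the localisation step is entirely consistent with it.
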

\begin{proof}
	The proof follows immediately from the bounds \eqref{nonconforming bound} and \eqref{eq:conf}, together with the triangle inequality $\ndg{u-u_h}\leq \ndg{e_c}+ \ndg{e_d}$.
\end{proof}
\begin{remark}
	In the above, we followed a known approach in splitting the estimator into a `residual part' and a `data oscillation part', assuming that $f \in L_2(\Omega)$ and for sufficiently smooth boundary data. In this setting the data oscillation error is typically dominated by the residual estimators. However, if the forcing data $f \in H^{-1}(\Omega)$, then data oscillation may dominate the error~\cite{KV}. It would be an interesting future development to investigate the approach from~\cite{KV} in the context of discontinuous Galerkin methods.
\end{remark}

\begin{remark}\label{remark:dirichlet}
	Theorem~\ref{thm:upperbound} has been proven under Assumption~\ref{A1}(b) which disallows boundary  faces with arbitrarily small size relative to the local mesh size. This assumption is reasonable in as much resolution of the problem domain is required in order for the numerical solution to incorporate the boundary conditions.
	Nevertheless, this assumption can be relaxed in the case of Dirichlet boundary conditions as follows. Noting that the latter is only required to construct the interpolant of the divergence-free component $\phi$ of the non-conforming error, which is not constrained on the Dirichlet boundary. Thus, the interpolant may be constructed for an extension $\tilde\phi$, (e.g., as a Stein-type extension operator) defined on an extended domain $\tilde{\Omega}\supset \Omega$ whose respective mesh $\tilde{\mesh}$ would correspond `closely' to the primal mesh $\mesh$ and is constructed so that it may contain \emph{no} small boundary faces.
	The resulting bounds, however, would depend on the, typically unknown, boundedness constant of the extension operator.
	\end{remark}

\section{Lower bounds}\label{lower_bounds}
We now derive lower bounds for the a posteriori error estimator of Theorem~\ref{thm:upperbound}. Of particular interest is the extend to which the efficiency of the estimator can be shown to be independent of the number and of the relative sizes of $(d-1)$-dimensional faces in the mesh.
The situation differs for the elemental residual and face jump residuals; for clarity, we deal with them separately.

\subsection{Elemental residual}\label{sec:ele}

Lower bounds for the elemental residual can be derived under no further assumptions on the mesh. The analysis is based on a new element bubble function and some auxiliary results.

\begin{lemma}[{\cite[Corollary 4.24]{DGease}}]\label{inverse_h1}
	Let $\mesh$ satisfy the Assumption \ref{A1}.
	Then, for  each $\el$, $p\in\mathbb{N}$ and  $v\in\mathcal{P}_p(\k)$, the following inverse inequality holds
	\begin{equation}\label{eq:inverse_h1}
	\norm{\nabla v}{\k}^2 \leq  \einv h_\k^{-2} \norm{v}{\k}^2.
	\end{equation}
with  $\einv$ a positive constant depending only on $d$, $p$ and $\Csh$. Note also the trivial inequality $\norm{{\rm curl}\, v}{\k}^2 \leq  (d-1) 	\norm{\nabla v}{\k}^2$.
\end{lemma}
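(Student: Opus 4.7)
The plan is to exploit the star-shapedness condition of Assumption~\ref{A1}(a) and reduce the estimate on the polytopic element $\k$ to one on two concentric balls of comparable radii. Let $B_{\rm in}:=B(\mathbf{x}^0,r_\k)$ be the inscribed ball (of radius $r_\k\ge\Csh^{-1}h_\k$) and $B_{\rm out}:=B(\mathbf{x}^0,h_\k)$. Since $\mathbf{x}^0\in\k$ and $h_\k=\diam(\k)$, we have the sandwich
\[
B_{\rm in}\subset \k\subset B_{\rm out},
\]
where the outer inclusion is the only use of star-shapedness at this level, while the inner inclusion feeds the Markov-type step below. Crucially, the ratio of radii satisfies $h_\k/r_\k\le\Csh$, so the two balls are ``comparable'' with a ratio controlled only by the shape-regularity constant.

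First, I would dominate the gradient by lifting to the outer ball: $\norm{\nabla v}{\k}\le \norm{\nabla v}{B_{\rm out}}$. On the ball $B_{\rm out}$ a scaled inverse inequality is standard: an affine map to the unit ball $\hat B$, application of the classical polynomial inverse inequality on $\hat B$ (whose constant depends only on $p$ and $d$), and scaling back yields
\[
\norm{\nabla v}{B_{\rm out}}^2 \le C_1(p,d)\, h_\k^{-2}\, \norm{v}{B_{\rm out}}^2.
\]
Next, I would invoke a Remez/Markov-type polynomial norm equivalence on concentric balls, namely for $v\in\mathcal{P}_p$,
\[
\norm{v}{B_{\rm out}}^2 \le C_2(p,d,\Csh)\, \norm{v}{B_{\rm in}}^2,
\]
which is available because the ratio $h_\k/r_\k$ is bounded solely by $\Csh$. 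Finally, the inner inclusion $B_{\rm in}\subset\k$ gives $\norm{v}{B_{\rm in}}\le\norm{v}{\k}$, and chaining the three estimates delivers~\eqref{eq:inverse_h1} with $\einv = C_1C_2$ depending only on $p$, $d$, and $\Csh$.

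The main obstacle is the polynomial norm-equivalence constant $C_2(p,d,\Csh)$: it must be independent of the shape of $\k$ and, in particular, of the number and sizes of its faces, to comply with the paper's philosophy of avoiding face-diameter dependence. This is the only place where star-shapedness is essential, and it is precisely what rules out alternative proofs that go through a reference element via affine mappings from $\k$ itself (such mappings would be degenerate on general polytopes). Once the step through $B_{\rm in},B_{\rm out}$ is made, everything else is standard.

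The auxiliary curl bound is a pointwise identity. Each component of $\mathrm{curl}\,v$ has the form $\partial_i v_j-\partial_j v_i$, so $(\partial_i v_j-\partial_j v_i)^2\le 2\bigl((\partial_i v_j)^2+(\partial_j v_i)^2\bigr)$; summing over the $2d-3$ independent components and noting that each partial derivative appears at most once on the right gives $|\mathrm{curl}\,v|^2\le (d-1)|\nabla v|^2$ (with equality for $d=2$). Integrating over $\k$ completes the proof.
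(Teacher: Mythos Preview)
The paper does not give its own proof of this lemma; it is quoted verbatim from \cite[Corollary~4.24]{DGease} and used as a black box. So there is nothing to compare against here beyond noting that your argument is a correct and standard route to such an inverse estimate on star-shaped polytopes: sandwich $\k$ between the inscribed ball $B_{\rm in}$ and the circumscribed ball $B_{\rm out}$, apply the classical inverse inequality on $B_{\rm out}$, and transfer back via the polynomial norm equivalence $\norm{v}{B_{\rm out}}\le C(p,d,h_\k/r_\k)\norm{v}{B_{\rm in}}$ between concentric balls. This is essentially the strategy underlying the cited result, and it delivers a constant depending only on $p$, $d$, and $\Csh$ as claimed.

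Two small inaccuracies in your write-up are worth flagging. First, neither inclusion actually uses star-shapedness: $\k\subset B_{\rm out}$ follows simply from $\mathbf{x}^0\in\k$ and $h_\k=\diam(\k)$, while $B_{\rm in}\subset\k$ is the inscribed-ball part of Assumption~\ref{A1}(a). Star-shapedness per se is not invoked anywhere in your chain; only the inscribed-ball radius bound $r_\k\ge\Csh^{-1}h_\k$ matters. Second, your justification of the curl bound is phrased for vector-valued $v$ with components of the form $\partial_i v_j-\partial_j v_i$, which is the $d=3$ picture; for $d=2$ the relevant object is the scalar curl (rotated gradient) of a scalar, for which $|\mathrm{curl}\,v|=|\nabla v|$ pointwise, giving the claimed equality with $d-1=1$ directly rather than via your counting argument. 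Neither point affects the validity of the result.
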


Next, for a generic  $d-$dimensional simplex $T$, we denote its  barycentric co-ordinates by $\lambda_{T}^i$,
$d=0,\dots,d$, and denote by $F_i$, $i=0,\dots,d$
the corresponding $(d-1)$-dimensional simplicial  face of $T$ such that $\lambda_T^i|_{F_i}=0$. Note that
$
\linf{\nabla \lambda_{T}^i}{T} = d|F_i|/|T|,
$
since $\nabla \lambda_{T}^i$ is constant. Importantly, the maximum norm is determined by the distance of the $i$-th vertex from the face $F_i$, but it is \emph{independent of the measure of face $F_i$}, see Figure \ref{fig:bubble_function} (left) for an illustration.
\begin{figure}[!t]
	\begin{center}
		\begin{tabular}{cc}
			\includegraphics[angle=90,scale=0.35]{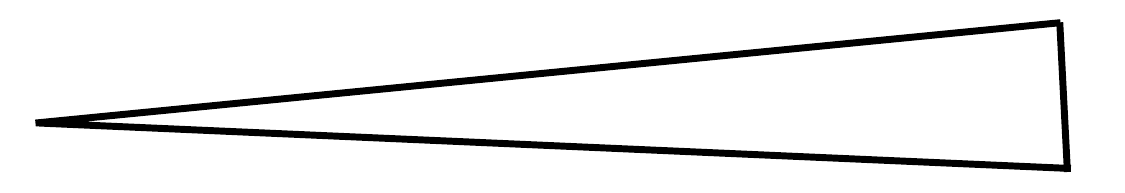} 	\hspace{2cm}
			\includegraphics[scale=0.35]{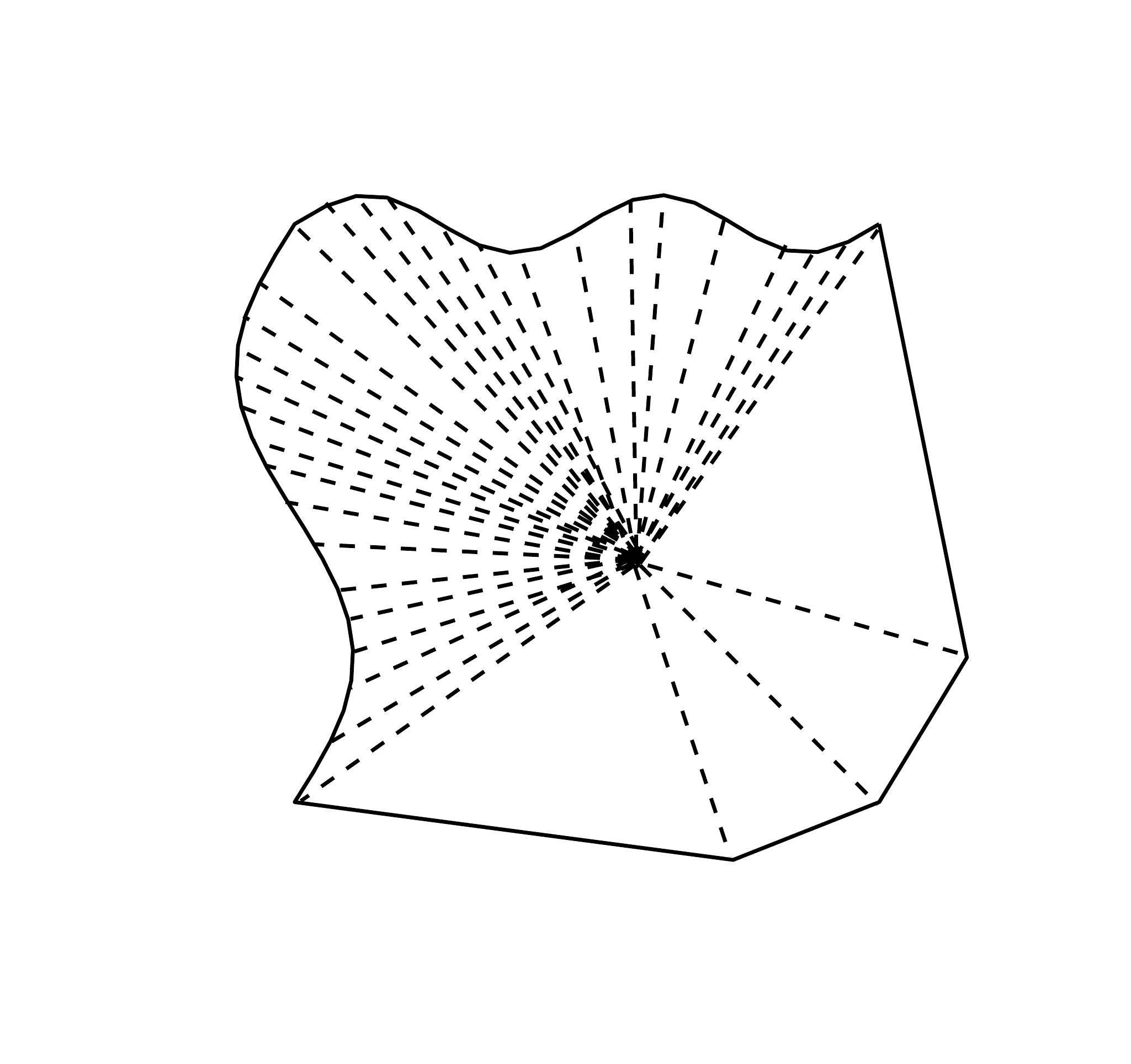}
		\end{tabular}
	\end{center}
	\caption{Left: a triangle with  one `small' face. Right: a polytopic element with many `small' faces.}
	\label{fig:bubble_function}
	\setlength{\unitlength}{1cm}
	\begin{picture}(0,0)
	\put(8.05,2.85){\makebox(-10.,4.8){{$F_i$}}}
	\put(8.05,2.85){\makebox(-10.1,3){{$T$}}}
	\put(8.05,2.85){\makebox(4,4){{$\k$}}}
	\put(8.05,2.85){\makebox(1,0.7){{$O$}}}
	\put(8.05,2.85){\makebox(2,5.2){{$F_i$}}}
	\put(8.05,2.85){\makebox(1.6,4){{$\tau_i$}}}
	\end{picture}
\end{figure}

Let $\el$ and let $m_\k$ be the number of its faces. Given that $\k$ is star-shaped by Assumption \ref{A1}, we can construct a non-overlapping subdivision of $\k$ into $m_\k$ simplicial sub-elements $\tau_j$ by joining the face $F_j$, $j=1,\dots,m_\k$,  of $\k$ with the centre of the largest ball inscribed in $\k$; see Figure \ref{fig:bubble_function} (right) for an illustration. Note that 
$h_\k\ge\diam(\tau_j)\ge r_\k\ge \Csh^{-1}h_\k$.
Moreover, letting  $\lambda_j:=\lambda_{\tau_j}^i$ with $i$ such that $\lambda_{\tau_j}^i$ is the barycentric coordinate of $\tau_j$ corresponding to the vertex of $\tau_j$ which is internal to $\k$, it follows that
\begin{equation}\label{nodal_basis_nabla}
 h_\k^{-1}\le\linf{\nabla \lambda_j}{\tau_j} = d|\partial\tau_j\cap \partial \k|/|\tau_j|\le  \Csh h_K^{-1}.
\end{equation}
\begin{definition}[Element bubble]
Let $\el$ and let $m_\k$ be the number of its faces. With the above notation, the element bubble function $b_\k$ is defined as
	\begin{equation}\label{eq:bubble function}
	b_\k|_{\tau_j}  = \lambda_{j},
	\end{equation}
for $j=1,\dots,m_K$.
\end{definition}

By construction, $b_\k$ is a continuous piecewise polynomial function  with zero trace and with values in $[0, 1]$ on $\k$.
Next, we will derive some important properties of the new bubble function \eqref{eq:bubble function}.
\begin{lemma}
	For each $\k\in \mesh$ satisfying the Assumption \ref{A1} and for each $v\in \mathcal{P}_{p}(\k)$, we have
	\begin{equation}\label{eq: H1_norm_element_bubble}
	\ltwo{ \nabla (b_\k v)}{\k}^2 \leq 2(\Csh^2+\einv) h_\k^{-2}\ltwo{v}{\k}^2,
	\end{equation}
 with $\Csh$ as in Assumption \ref{A1} and $\einv$ is in \eqref{inverse_h1},  and
	\begin{equation}\label{eq: L2_norm_element_bubble}
	\ltwo{v}{\k}^2 \leq  C_{b,\k} \ltwo{b_\k^{1/2}v}{\k}^2,
	\end{equation}
	with $C_{b,\k}:=[2(p+2)]^{2d} \frac{d^{d+1}}{ (d-1)!}$.
\end{lemma}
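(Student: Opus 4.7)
\textbf{Plan for \eqref{eq: H1_norm_element_bubble}.} The idea is a direct per-sub-simplex computation. I would decompose $\|\nabla(b_\k v)\|_\k^2 = \sum_{j=1}^{m_\k}\|\nabla(b_\k v)\|_{\tau_j}^2$, and on each $\tau_j$ use the product rule $\nabla(b_\k v) = v\,\nabla\lambda_j + \lambda_j\,\nabla v$ together with $(a+b)^2\le 2a^2+2b^2$. Since $0\le\lambda_j\le 1$ on $\tau_j$, the second term contributes at most $2\|\nabla v\|_{\tau_j}^2$. For the first term, the pointwise bound $|\nabla\lambda_j|\le \Csh h_\k^{-1}$ from \eqref{nodal_basis_nabla} gives $2\Csh^2 h_\k^{-2}\|v\|_{\tau_j}^2$. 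Summing over $j$ yields $\|\nabla(b_\k v)\|_\k^2\le 2\Csh^2 h_\k^{-2}\|v\|_\k^2+2\|\nabla v\|_\k^2$, and a single application of Lemma~\ref{inverse_h1} converts the last term into $2\einv h_\k^{-2}\|v\|_\k^2$, producing \eqref{eq: H1_norm_element_bubble}. This step is routine.

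\textbf{Plan for \eqref{eq: L2_norm_element_bubble}.} I would again split $\|v\|_\k^2=\sum_j\|v\|_{\tau_j}^2$ and $\|b_\k^{1/2}v\|_\k^2=\sum_j\|\lambda_j^{1/2}v\|_{\tau_j}^2$, and reduce each sub-simplex estimate to a reference inequality. Pulling $\tau_j$ back affinely to the reference $d$-simplex $\hat\tau$ (mapping the internal vertex of $\tau_j$ to the vertex opposite to the face where $\hat\lambda=0$) transforms $\lambda_j$ to a reference barycentric coordinate $\hat\lambda$, and the Jacobian factors cancel between the two integrals. Hence the heart of the proof is the reference estimate
\begin{equation}\label{eq:ref_bubble}
\|\hat v\|_{\hat\tau}^2 \le C(p,d)\,\|\hat\lambda^{1/2}\hat v\|_{\hat\tau}^2\qquad\forall\,\hat v\in\mathcal{P}_p(\hat\tau).
\end{equation}
Both sides of \eqref{eq:ref_bubble} are norms on the finite-dimensional space $\mathcal{P}_p(\hat\tau)$ (positivity of $\hat\lambda$ in the interior suffices), so equivalence with some constant is clear; the issue is making the constant explicit and obtaining the stated $[2(p+2)]^{2d}\,d^{d+1}/(d-1)!$.

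\textbf{Obtaining the explicit constant.} This is the main technical step. I would proceed by Fubini slicing: aligning $\hat\lambda$ with one coordinate direction $x_1$ reduces \eqref{eq:ref_bubble} to iterated 1D estimates of the form $\int_0^1 w^2\,ds\le [2(p+2)]^2\int_0^1 s\,w^2\,ds$ for $w\in\mathcal{P}_p([0,1])$, which is the classical weighted-polynomial inverse inequality (a consequence of Markov-type bounds on Jacobi polynomial expansions). Because the slicing over the $d{-}1$ transverse variables produces shrinking simplicial cross-sections, one has to rescale those cross-sections to unit size and account for the resulting Jacobians $(1-x_1)^{d-1}$; a careful bookkeeping of these rescalings, combined with the volume $|\hat\tau|=1/d!$ and the star-shapedness constant controlling the Jacobian of $\tau_j\mapsto\hat\tau$, gives rise to the geometric factor $d^{d+1}/(d-1)!$ while the 1D step contributes $[2(p+2)]^2$ in each of the $d$ coordinate directions. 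Summing over $j$ and using $\sum_j\|v\|_{\tau_j}^2=\|v\|_\k^2$ completes \eqref{eq: L2_norm_element_bubble}. The tedious part, and the only real obstacle, is the explicit tracking of the constant through the iterated slicing; the qualitative inequality is immediate from finite-dimensionality.
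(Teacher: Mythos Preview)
Your plan for \eqref{eq: H1_norm_element_bubble} is correct and coincides with the paper's proof: product rule on each $\tau_j$, the pointwise bound $|\nabla\lambda_j|\le\Csh h_\k^{-1}$ from \eqref{nodal_basis_nabla}, $0\le\lambda_j\le1$, and a single application of Lemma~\ref{inverse_h1}.

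For \eqref{eq: L2_norm_element_bubble} your slicing plan has a gap, and the paper's route is both simpler and avoids it. If you align $\hat\lambda$ with $x_1$ and slice in the $x_1$-direction, the fibres have length $L(x')=1-\sum_{i\ge2}x_i$; rescaling the 1D inequality $\int_0^1 w^2\le[2(p+2)]^2\int_0^1 s\,w^2$ to $[0,L]$ produces the constant $[2(p+2)]^2/L$, which blows up as $x'$ approaches a face of the transverse simplex, so a fibre-wise application does not give a uniform bound. The $d$-fold iteration you invoke is in fact the standard mechanism for the \emph{full}-bubble inequality (weight $\psi_T=(d+1)^{d+1}\prod_{i=0}^d\lambda_T^i$), not for a single barycentric weight; and once you have the full-bubble estimate you still need a further step to pass to $\lambda_j$ alone.

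That further step is precisely the paper's argument: it quotes the known full-bubble equivalence on a simplex $T$ from \cite[Section~3.6]{MR3059294},
\[
\|v\|_T^2\le [2(p+2)]^{2d}\Big(\tfrac{d}{d+1}\Big)^{d+1}\tfrac{1}{(d-1)!}\,\|\psi_T^{1/2}v\|_T^2,
\]
and then uses the trivial pointwise bound $\psi_T=(d+1)^{d+1}\prod_i\lambda_T^i\le(d+1)^{d+1}\lambda_T^j$ (since each $\lambda_T^i\le1$) to obtain $\|v\|_T^2\le C_{b,\k}\|(\lambda_T^j)^{1/2}v\|_T^2$ with exactly the stated constant. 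Applying this on each $\tau_j$ with the interior-vertex coordinate and summing gives \eqref{eq: L2_norm_element_bubble}. Note also that the star-shapedness constant plays no role in this part---as you yourself observed, the affine Jacobians cancel---so that ingredient should be dropped from your bookkeeping.
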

\begin{proof}
	Using the triangle inequality, the bound \eqref{nodal_basis_nabla}, and the inverse inequality \eqref{inverse_h1},   we have, respectively,
	\begin{align}
	\ltwo{ \nabla (b_\k v)}{\k}^2 &\leq 2\ltwo{(\nabla b_\k)v}{\k}^2
	+2\ltwo{b_\k (\nabla v)}{\k}^2   \nno \\
	& \leq 2 \sum_{j=1}^{m_\k} \|\nabla \lambda_j\|_{L_\infty(\tau_j)}^2 \ltwo{v}{\tau_j}^2
	+ 2 \|b_\k\|_{L_\infty(\k)}^2\ltwo{\nabla v}{\k}^2 \nno \\
	& \leq  2 
	\Csh^2h_\k^{-2}
	\sum_{i=1}^{m_\k}  \ltwo{v}{\tau_i}^2
	+ 2 \einv h_\k^{-2} \ltwo{v}{\k}^2
	= 2(\Csh^2+\einv) h_\k^{-2}\ltwo{v}{\k}^2,
	\end{align}
	which is the bound required in~\eqref{eq: H1_norm_element_bubble}.
	We now prove the norm equivalence relation \eqref{eq: L2_norm_element_bubble}. Recalling the  norm equivalence relation for each $v\in \mathcal{P}_p(T)$ on a simplex $T$ from  \cite[Section 3.6]{MR3059294}, we have
	\begin{equation}
	\ltwo{v}{T}^2 \leq  [2(p+2)]^{2d} \Big( \frac{d}{d+1}\Big)^{d+1} \frac{1}{(d-1)!} \ltwo{(\psi_T)^{1/2} v}{T}^2,
	\end{equation}
	with $\psi_T:= (d+1)^{d+1} (\prod_{i=0}^{d}\lambda_T^i)$. Then, by using  $\|\lambda_T^i\|_{L_\infty(T)} = 1$, $i=0,\dots,d$, we deduce
	\begin{equation}\label{eq:classical_result}
	\ltwo{v}{T}^2
	\leq   C_{b,\k}\ltwo{(\lambda_{T}^{i})^{1/2} v}{T}^2,
	\end{equation}
	for each $i\in\{0,\dots,d\}$.
	Hence, the bound \eqref{eq: L2_norm_element_bubble} is proven using the definition of $b_\k(x)$ in \eqref{eq:bubble function}:
	\begin{align}
	\ltwo{v}{\k}^2 =  \sum_{j=1}^{m_\k} \ltwo{v}{\tau_j}^2
	&\leq \sum_{j=1}^{m_\k}  C_{b,\k} \ltwo{(\lambda_{j})^{1/2} v}{\tau_j}^2
	= C_{b,\k} \ltwo{b_\k^{1/2} v}{\k}^2.
	\end{align}
\end{proof}
\begin{remark}
	The new element bubble function on polytopic meshes  in the above Lemma is different from the classical element bubble function on simplices. In particular, we note that the important relations \eqref{eq: H1_norm_element_bubble} and \eqref{eq: L2_norm_element_bubble} are independent of the number and measure of the faces of the element $\k$.
\end{remark}

\begin{theorem}[Elemental residual lower bound]\label{thm:lower_bound_element}
	Let $u$ be the solution of \eqref{Problem} and let $u_h\in \fes$ be its dG approximation under Assumption \ref{A1} and \ref{A2}. Then, for each $\k\in \mesh$, we have
	\begin{equation}\label{eq:lower bound elem}
	\ltwo{h(\Pi f+ \nabla  \cdot  (\diff\nabla u_h))}{\k}^2\leq 2C_{b,\k}
	\big(2\big(\Csh^2+\einv) (\alpha^*)^2 \ltwo{\sqrt{a}\nabla e}{\k}^2+O_{\k,E}^2 \big).
	\end{equation}
\end{theorem}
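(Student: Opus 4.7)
The plan is to apply the classical Verf\"urth bubble-function technique, but using the newly constructed polytopic element bubble $b_\k$ from~\eqref{eq:bubble function}, whose key properties~\eqref{eq: H1_norm_element_bubble} and~\eqref{eq: L2_norm_element_bubble} are insensitive to the number and the measure of the elemental faces. Denote the element residual by $R_\k := \Pi f + \nabla \cdot (\diff \nabla u_h)|_\k$, which is piecewise polynomial on $\k$ since $\diff$ is piecewise constant and $u_h|_\k\in\mathcal{P}_p(\k)$.

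First, I would test with $w := b_\k R_\k$, extended by zero outside $\k$, so that $w\in H^1_0(\k)$. By the new norm equivalence~\eqref{eq: L2_norm_element_bubble} applied to $v=R_\k$, we have
\begin{equation*}
\ltwo{R_\k}{\k}^2 \;\le\; C_{b,\k}\,\ltwo{b_\k^{1/2}R_\k}{\k}^2 \;=\; C_{b,\k}\int_\k R_\k\, w\ud\uu{x}.
\end{equation*}
Next, I would rewrite $R_\k$ in terms of the true error $e=u-u_h$ and the data oscillation. Using the strong form $-\nabla\cdot(\diff\nabla u)=f$ a.e. on $\k$, we have $R_\k=(\Pi f - f)-\nabla\cdot(\diff\nabla e)$. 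Integrating against $w$ and integrating by parts on the second contribution (using $w|_{\partial\k}=0$) yields
\begin{equation*}
\int_\k R_\k\,w\ud\uu{x}
=\int_\k(\Pi f - f)\,w\ud\uu{x} + \int_\k \diff\nabla e\cdot\nabla w\ud\uu{x}.
\end{equation*}

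Cauchy--Schwarz then controls each term. For the data oscillation contribution, using that $0\le b_\k\le 1$ gives $\ltwo{w}{\k}\le\ltwo{R_\k}{\k}$, producing the oscillation factor $O_{\k,E}=\ltwo{h_\k(f-\Pi f)}{\k}$ after multiplication by $h_\k$. For the energy contribution, Cauchy--Schwarz in the $\diff$-weighted inner product, together with the ellipticity bound~\eqref{uniform ellipticity} to move between $\nabla w$ and $\sqrt{\diff}\nabla w$, bounds the term by a multiple of $\sqrt{\alpha^*}\,\ltwo{\sqrt{\diff}\nabla e}{\k}\,\ltwo{\nabla w}{\k}$. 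The crucial step is to estimate $\ltwo{\nabla w}{\k}$: since $w=b_\k R_\k$ with $R_\k\in\mathcal{P}_p(\k)$, the newly-proven polytopic inverse estimate~\eqref{eq: H1_norm_element_bubble} gives
\begin{equation*}
\ltwo{\nabla w}{\k} \;\le\; \sqrt{2(\Csh^2+\einv)}\, h_\k^{-1}\,\ltwo{R_\k}{\k}.
\end{equation*}

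Collecting the bounds, dividing by $\ltwo{R_\k}{\k}$, multiplying by $h_\k$ and squaring (using $(a+b)^2\le 2(a^2+b^2)$) yields the desired estimate~\eqref{eq:lower bound elem}. The main obstacle is precisely the one the bubble function of Section~\ref{sec:ele} was engineered to overcome: for a classical (barycentric) element bubble on a polytope with many small faces, $\|\nabla b_\k\|_{L_\infty}$ would blow up in proportion to the reciprocal of the smallest face measure, and both the inverse-type gradient bound and the norm equivalence $\ltwo{v}{\k}\lesssim \ltwo{b_\k^{1/2}v}{\k}$ would degrade. The piecewise-barycentric construction~\eqref{eq:bubble function} circumvents this because each $\|\nabla\lambda_j\|_{L_\infty(\tau_j)}$ depends only on the distance from the interior point to the face $F_j$, controlled by~\eqref{nodal_basis_nabla}, so that the constants $C_{b,\k}$, $\Csh$ and $\einv$ appearing on the right-hand side remain independent of the number and measure of the faces of $\k$.
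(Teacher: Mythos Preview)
Your proposal is correct and follows essentially the same Verf\"urth-type bubble argument as the paper: test with $w=b_\k R_\k\in H^1_0(\k)$, use the norm equivalence~\eqref{eq: L2_norm_element_bubble} and the gradient bound~\eqref{eq: H1_norm_element_bubble} for the new polytopic bubble, and conclude via Cauchy--Schwarz. The only cosmetic difference is that you invoke the strong form $-\nabla\cdot(\diff\nabla u)=f$ directly to obtain the identity $\int_\k R_\k w=\int_\k(\Pi f-f)w+\int_\k \diff\nabla e\cdot\nabla w$, whereas the paper arrives at the same relation via the error equation~\eqref{eq:error equation} and~\eqref{conformoing_bound} (all face terms drop since $w|_{\partial\k}=0$).
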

\begin{proof}
	We  fix $v\in H^1_{\rm D}(\Omega)$ as  $v|_\k = b_\k{(\Pi f+\nabla \cdot (\diff\nabla u_h))}$, where $b_\k$ is the element  bubble function in \eqref{eq:bubble function},  and extended   to zero  outside $\k$. Using  relations \eqref{eq:error equation},  \eqref{conformoing_bound}, \eqref{eq: H1_norm_element_bubble}, and $b_\k\leq 1$,  we obtain
	\begin{align}\label{eq:efficiency 1}
	&\hspace{-0.5cm}\int_{\k}{b_\k{(\Pi f+\nabla \cdot (\diff\nabla u_h))}}^2 \ud \uu{x}
	= \int_{\k}(\Pi f -f)v \ud \uu{x} + \int_{\k} \diff \nabla e \cdot \nabla v \ud \uu{x} \nno \\
	& \leq \ltwo{\Pi f -f}{\k}\ltwo{v}{\k} +\ltwo{\sqrt{a}\nabla e}{\k}\ltwo{\sqrt{a} \nabla v}{\k}  \nno \\
	& \leq
	(\ltwo{\Pi f -f}{\k}
	+ \sqrt{2\big(\Csh^2+\einv)} \alpha^* h_\k^{-1} \ltwo{\sqrt{a} \nabla e}{\k}\big)
	\ltwo{(\Pi f+\nabla \cdot (\diff\nabla u_h)) }{\k},
	\end{align}
	Recalling  \eqref{eq: L2_norm_element_bubble}, we have
	$\ltwo{\Pi f+\nabla \cdot(\diff\nabla u_h)}{\k}^2
	\leq
	C_{b,\k} \ltwo{b_\k^{1/2}(\Pi f+\nabla \cdot (\diff\nabla u_h))}{\k}^2 $ from which the result  \eqref{eq:lower bound elem} already follows.
\end{proof}

\subsection{Flux residuals}\label{sec:jump}

In view of proving the lower bound for the flux residuals, we require the number of faces of each element to be uniformly bounded. Furthermore,  in the case $d=3$ we shall assume that each face $F$ is shape-regular. Note that such assumptions still allows for arbitrarily small faces.
\begin{assumption}\label{A3}
The number of faces of every element $\el$ is uniformly bounded.
For $d=3$ only, for every $F\in\Gamma_{\dint}$,  the radius $r_F$ of the largest $(d-1)$-dimensional ball inscribed in $F$ satisfies $r_F\ge\Csh^{-1}h_F$, with $\Csh$ as in Assumption \ref{A1}.
\end{assumption}
Note that the above assumption does \emph{not} forbid the size of a mesh face to be arbitrarily smaller than that of the elements it belongs to.

To construct the face bubble function, we consider the standard face bubble functions supported in a pair of simplices  contained in the neighbouring elements.
\begin{definition}[Face bubble]\label{def:edge_bubble}
Let $\el$ and $F\subset\partial\k$  a mesh face satisfying Assumption~\ref{A3}. Define $T_F^\k \subset \k$ to be the simplex having  $F$ as a face and opposite vertex the point at distance $h_F$ from $F$ along the segment joining the barycentre of $F$ with the centre of star-shapedness of $\k$.
The face  bubble function $b_F$ is defined on $\k$ as the standard bubble function of $T_F^\k$, cf.~\cite{Ainsworth-Oden:2000,KP}, extended by zero to the rest of $\k$.
\end{definition}
\begin{lemma}\label{edgebubble}
	Let $\el$ and $F\subset\partial\k$  a mesh face under Assumption \ref{A1}. Let $v\in \mathcal{P}_{p}(F)$ and denote by $v$ also the constant extension in $T_F^\k$ of $v$ in the direction normal to $F$.
	We have
	\begin{equation}\label{eq:L2_norm_face_bubble}
		\ltwo{v}{F}^2 \leq  C_{b,F} \ltwo{b_F^{1/2}v}{F}^2,
	\end{equation}
	with $C_{b,F}:=[2(p+2)]^{2(d-1)} \frac{(d-1)^{d}}{ (d-2)!} $, and
	\begin{equation}\label{eq:H1_norm_edge_bubble}
		\ltwo{\nabla (b_F v )}{T_F^\k}^2
		\leq  \einvT h_{\k} h_F^{-2}  	\ltwo{v}{F}^2,
	\end{equation}
	with $\einvT$ the constant of the inverse inequality \eqref{eq:inverse_h1} in the case of $d-1$-dimensional simplices. Moreover, we have
	\begin{equation}\label{eq:curl_edge_bubble}
	\ltwo{{\rm curl}\, (b_T v)}{T_F^\k}^2 \leq (d-1)\einvT h_{\k} h_F^{-2}  \ltwo{v}{F}^2.
	\end{equation}
\end{lemma}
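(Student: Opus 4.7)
The overall plan is to transfer the classical simplicial bubble-function estimates to the face $F$ and the sub-simplex $T_F^\k$: I will treat~\eqref{eq:L2_norm_face_bubble} as a dimension-reduced norm equivalence on $F$, \eqref{eq:H1_norm_edge_bubble} as a product-rule gradient estimate on $T_F^\k$, and~\eqref{eq:curl_edge_bubble} as an algebraic corollary of~\eqref{eq:H1_norm_edge_bubble}.

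For~\eqref{eq:L2_norm_face_bubble} I would apply to the $(d-1)$-simplex $F$ the very same chain of inequalities used in the preceding subsection to derive $C_{b,\k}$, but with $d$ replaced by $d-1$ throughout. This yields
\[
\ltwo{v}{F}^2 \leq [2(p+2)]^{2(d-1)}\Big(\frac{d-1}{d}\Big)^{d}\frac{1}{(d-2)!}\ltwo{\psi_F^{1/2}v}{F}^2,
\]
with $\psi_F = d^d\prod_{i=0}^{d-1}\lambda_F^i$ the normalised face bubble on $F$. The key identification is that the barycentric coordinates of $T_F^\k$ associated with the vertices of $F$ restrict on $F$ to the barycentric coordinates of $F$, so $b_F|_F = d^{-d}\psi_F$; the prefactor $d^d$ is then absorbed and the stated $C_{b,F}$ follows. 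For~\eqref{eq:H1_norm_edge_bubble}, I would split via the product rule into $\ltwo{\nabla(b_F v)}{T_F^\k}^2\leq 2\ltwo{(\nabla b_F)v}{T_F^\k}^2 + 2\ltwo{b_F\nabla v}{T_F^\k}^2$ and dispatch each term using three ingredients: the $L_\infty$ gradient bound $\|\nabla b_F\|_{L_\infty(T_F^\k)}\leq C h_F^{-1}$, by direct computation on the shape-regular $T_F^\k$; the Fubini-type estimate $\ltwo{w}{T_F^\k}^2\leq h_F\ltwo{w}{F}^2$ valid for any $w$ that is constant in the direction normal to $F$, applied first to $v$ and then to $\nabla v = \tnabla v$ (since $v$ is the constant normal extension); and the inverse inequality~\eqref{eq:inverse_h1} on the $(d-1)$-simplex $F$. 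Combining the three yields a bound of the form $Ch_F^{-1}\ltwo{v}{F}^2$, which is absorbed into the target $\einvT h_\k h_F^{-2}\ltwo{v}{F}^2$ via the trivial inequality $h_F\leq h_\k$. Then~\eqref{eq:curl_edge_bubble} follows at once from the pointwise bound $|\text{curl}\,g|^2\leq (d-1)|\nabla g|^2$, valid for scalar $g$ when $d=2$ and for vector $g$ when $d=3$, applied to $g = b_F v$.

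The main obstacle I anticipate is establishing the two supporting geometric facts about $T_F^\k$ on which both the $L_\infty$ gradient bound for $b_F$ and the Fubini estimate rest, namely the shape-regularity of $T_F^\k$ itself and the containment within $F$ of its orthogonal projection onto the hyperplane of $F$. Both hinge on the specific choice of apex in Definition~\ref{def:edge_bubble}: it lies at distance $h_F$ from $F$ along the segment from the barycentre of $F$ to the star centre $\mathbf{x}^0_\k$, so Assumption~\ref{A1} forces the cosine of the angle between this segment and the inward normal of $F$ to be bounded below by a function of $\Csh$. Together with Assumption~\ref{A3}, which controls the shape of $F$ itself when $d=3$, this ensures that the perpendicular height of $T_F^\k$ above $F$ is comparable to $h_F$, that its orthogonal projection on $F$ is contained in $F$, and hence that all subsequent polynomial constants depend only on $p$, $d$, and $\Csh$.
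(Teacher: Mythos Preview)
Your argument is correct and close in spirit to the paper's, but the route for \eqref{eq:H1_norm_edge_bubble} is genuinely different. The paper does not split via the product rule; instead it observes that $b_F v$ is itself a polynomial on the $d$-simplex $T_F^\k$ and applies the inverse inequality~\eqref{eq:inverse_h1} directly to that product, then uses $b_F\le 1$ together with the Fubini-type bound $\ltwo{v}{T_F^\k}^2\le h_\k\ltwo{v}{F}^2$ to conclude in a single step. Your product-rule approach trades this one $d$-dimensional inverse estimate for a pointwise bound on $\nabla b_F$ plus the $(d-1)$-dimensional inverse inequality on $F$; this is slightly longer but has the merit of making transparent why the constant is naturally the $(d{-}1)$-dimensional one, and of isolating exactly where the shape-regularity of $T_F^\k$ (which you rightly flag and justify via Assumptions~\ref{A1} and~\ref{A3}) enters. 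The paper's one-line argument, by contrast, needs the shape-regularity of $T_F^\k$ implicitly, to control the inverse-inequality constant on that $d$-simplex. For \eqref{eq:L2_norm_face_bubble} the paper simply cites Karakashian--Pascal, whereas you redo the $(d{-}1)$-dimensional analogue of the earlier $C_{b,\k}$ derivation; for \eqref{eq:curl_edge_bubble} the two proofs coincide.
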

\begin{proof}
The bound~\eqref{eq:L2_norm_face_bubble} is given in~\cite{KP} and~\eqref{eq:H1_norm_edge_bubble} follows immediately from~\eqref{eq:inverse_h1}, the fact that  $b_F\le 1$ and the fact that $\ltwo{v}{T_F^\k}^2 \leq h_{\k} \ltwo{v}{F}^2$. Finally, the bound~\eqref{eq:curl_edge_bubble} is a trivial consequence of ~\eqref{eq:H1_norm_edge_bubble} observing once again that $\norm{{\rm curl}\, v}{\k}^2 \leq  (d-1) 	\norm{\nabla v}{\k}^2$.
\end{proof}

\begin{theorem}[flux residuals lower bound]\label{thm:lower_bound_face}
	Let $u$ be the solution of \eqref{Problem} and let $u_h\in \fes$ be its dG approximation under Assumption \ref{A1},  \ref{A2} and~\ref{A3}. Then, for each $\k\in \mesh$, we have
	\begin{align}\label{eq:lower bound face}
		&\ltwo{{h}^{1/2}\jump{\diff\nabla u_h\cdot \mbf{n}}}{ \partial \k \cap  \Gamma_{\dint}}^2\\
		 &\qquad\le  6C_{b,F}\Big(
		 2C_{b,\k}
		\big(\Csh^2+\einv) (\alpha^*)^2 \ltwo{\sqrt{a}\nabla e}{\omega_\k}^2
		+\sum_{\k'\in\omega_\k}(1+2C_{b,\k} )O_{\k',E}^2 \nno\\
		&\qquad\qquad\qquad+\alpha^* \einvT \sum_{F\in\partial\k\cap\Gamma_\dint}
		 h_{F}^{-{1}}h_{F^\perp} \ltwo{\sqrt{a}\nabla e}{\omega_F}^2\Big),\nno
	\end{align}
	and
	\begin{equation}\label{eq:lower bound tang}
		\ltwo{h^{1/2}\jump{\tnabla u_h}}{\Gamma_{\dint}}^2\le
 		2C_{b,F}  (d-1)\alpha_*\einvT
		\sum_{F\in\partial\k\cap\Gamma_\dint}h_F^{-1} h_{F^\perp} \ltwo{\sqrt{a}\nabla e}{\omega_F}^2;
	\end{equation}
	here, $\omega_\k$ is the patch of elements neighbouring $\k$, $\omega_F=T_F^\k\cup T_F^{\k'}$ with $\k'$ the element neighbouring $\k$ across $F$ and $h_{F^\perp}:=\max \{h_\k, h_{\k'}\}$.
\end{theorem}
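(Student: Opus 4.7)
\emph{Strategy.} The plan is to follow the classical residual-equation plus bubble-function template, but with the new polytopic face bubble $b_F$ of Definition~\ref{def:edge_bubble} in place of its simplicial analogue, exploiting the inverse estimates of Lemma~\ref{edgebubble}. Since $b_F$ is supported in the two-simplex patch $\omega_F := T_F^\k \cup T_F^{\k'}$, whose elements have height of order $h_F$ in the direction normal to $F$, the inverse constants in~\eqref{eq:H1_norm_edge_bubble}--\eqref{eq:curl_edge_bubble} carry the ratio $h_\k h_F^{-2}$, which is precisely what produces the factor $h_F^{-1}h_{F^\perp}$ in the stated lower bound. I will use a gradient-type test function for the normal flux jump and a curl-type test function for the tangential jump. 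The main technical obstacle will be that the small support of $b_F$ yields test functions with relatively large gradients, so that one has to relate the resulting expressions back to the energy error only on the neighbouring elements: for the normal jump this will require invoking Theorem~\ref{thm:lower_bound_element} to absorb a spurious element-residual term, and for the tangential jump it will hinge on a Green's identity that captures $\jump{\tnabla u_h}$ directly without any loss of a factor $h_\k/h_F$.

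\emph{Normal flux jump.} Fix an interior face $F \subset \partial\k \cap \Gamma_\dint$ with neighbour $\k'$, and set $\psi := \jump{\diff\nabla u_h \cdot \mbf{n}}|_F$. I extend $\psi$ by constants in the direction normal to $F$ into $\omega_F$, and define $v := b_F \psi$ on $\omega_F$, extended by zero to $\Omega$; then $v \in H^1_\ddd(\Omega)$. By~\eqref{eq:L2_norm_face_bubble}, $\ltwo{\psi}{F}^2 \le C_{b,F}\int_F \psi v$. Integrating $\int_{\omega_F}\diff\nabla_h u_h\cdot\nabla v$ by parts elementwise over $T_F^\k$ and $T_F^{\k'}$ and subtracting the weak form $\int_{\omega_F}\diff\nabla u\cdot\nabla v = \int_{\omega_F} f v$ will yield
\begin{equation*}
\int_F \psi v = -\int_{\omega_F}\diff\nabla e\cdot\nabla v + \int_{\omega_F}\bigl(f + \nabla_h\cdot(\diff\nabla_h u_h)\bigr)v.
\end{equation*}
Applying Cauchy--Schwarz, the $H^1$ bound~\eqref{eq:H1_norm_edge_bubble}, and the $L_2$ estimate $\ltwo{v}{T_F^\k}^2 \le h_\k\ltwo{\psi}{F}^2$ (a consequence of $b_F \le 1$ combined with the constant-normal extension), then absorbing one factor of $\ltwo{\psi}{F}$ and multiplying through by $h_F$, produces a face-wise inequality with two contributions on the right: an energy term of order $h_F^{-1}h_{F^\perp}\ltwo{\sqrt{\diff}\nabla e}{\omega_F}^2$ and a local element-residual term $\ltwo{f+\nabla_h\cdot(\diff\nabla_h u_h)}{\omega_F}^2$. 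The latter is then controlled by Theorem~\ref{thm:lower_bound_element} applied on each of $\k$ and $\k'$, which converts it into the $\omega_\k$-energy contribution plus data oscillation $O_{\k',E}^2$. A final summation over $F \subset \partial\k \cap \Gamma_\dint$, whose cardinality is bounded by Assumption~\ref{A3}, then yields~\eqref{eq:lower bound face}.

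\emph{Tangential jump.} I describe the case $d=2$; the case $d=3$ proceeds analogously after componentwise decomposition of $[H^1]^3$. For an interior face $F$ shared by $\k,\k'$, set $\psi := \jump{\tnabla u_h \cdot \mathbf{t}_F}|_F$, extend by constants normal to $F$, and define $w := b_F \psi$ on $\omega_F$ and zero outside. Then $w \in H^1_0(\omega_F)$ and is single-valued across $F$. The elementwise identity $\int_\k \nabla v\cdot\text{curl}\,w = \int_{\partial \k} v\,\partial_{\mathbf{t}} w$, combined with a tangential integration by parts on each face (using that $w$ vanishes at the face endpoints inside $\Omega$), gives for any elementwise smooth $v$
\begin{equation*}
\sum_{\k\in\mesh}\int_\k \nabla v\cdot\text{curl}\,w = -\int_{\Gamma_\dint}\jump{\tnabla v\cdot\mathbf{t}}\,w.
\end{equation*}
Applying this with $v = u_h$ and $v = u$ and subtracting, the continuity of $u$ kills the right-hand side, and I obtain $\int_F b_F \psi^2 = -\sum_{\k}\int_\k \nabla e\cdot\text{curl}\,w$. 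Cauchy--Schwarz, the curl bound~\eqref{eq:curl_edge_bubble} summed over $T_F^\k$ and $T_F^{\k'}$, together with~\eqref{eq:L2_norm_face_bubble}, followed by the standard absorption and multiplication by $h_F$, will then give the stated estimate \eqref{eq:lower bound tang} after summation over $F\subset\partial\k\cap\Gamma_\dint$.
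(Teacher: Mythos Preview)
Your proposal is correct and follows essentially the same route as the paper: bubble-weighted test functions on $\omega_F$, the error/residual identity obtained by elementwise integration by parts, Cauchy--Schwarz together with Lemma~\ref{edgebubble}, absorption via~\eqref{eq:L2_norm_face_bubble}, and, for the normal flux, a final appeal to Theorem~\ref{thm:lower_bound_element} to eliminate the element-residual contribution. For the tangential jump you correctly pick a curl-type test function and exploit $\mathrm{curl}\,\nabla u=0$, exactly as the paper does.

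Two small points of polish. First, the scaling that turns the face-wise inequality into the estimator on the left is multiplication by the \emph{element} mesh-size $h_\k$ (since $h|_F$ in $R_{\k,N}$ is the elemental diameter), not by $h_F$; the ratio $h_F^{-1}h_{F^\perp}$ then emerges from the factor $h_\k^{1/2}h_F^{-1}$ supplied by~\eqref{eq:H1_norm_edge_bubble}. Second, in the summation over $F\subset\partial\k\cap\Gamma_{\dint}$ the paper does not invoke the bounded cardinality from Assumption~\ref{A3}; instead it uses that the simplices $T_F^{\k}$ for distinct faces $F$ are pairwise non-overlapping, so that $\sum_F\|\cdot\|_{T_F^{\k}}^2\le\|\cdot\|_{\k}^2$ directly. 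Your argument via bounded cardinality is still valid under Assumption~\ref{A3}, but gives a slightly worse constant.
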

\begin{proof}
	In view of proving~\eqref{eq:lower bound face}, we first consider any $F\in\partial\k$ with $F\in\Gamma_\dint$.  Further, we fix $v\in H^1(\omega_F)$ as the constant extension of $\jump{\diff\nabla u_h\cdot \mbf{n}}|_F$ in the direction normal to $F$, so that $b_F v\in H^1_0(\omega_F)$, cf. Lemma~\ref{edgebubble}. Then, testing the error equation \eqref{eq:error equation} with $b_F v$ extended to zero on the whole of $\Omega$, we get
	\begin{align}
		\int_{\omega_F} \diff\nabla_h e\cdot \nabla (b_F v) \ud \uu{x}
		= &\int_{\omega_F}(f - \Pi f)(b_F v) \ud \uu{x} + \int_{\omega_F} {(\Pi f+\nabla \cdot (\diff\nabla u_h))}(b_F v)  \ud \uu{x}\nno\\
		&- \int_F \jump{\diff\nabla u_h\cdot \mbf{n}}^2 b_F  \ud{s}. \nno
	\end{align}
	From this, using~\eqref{eq:H1_norm_edge_bubble} and the fact that, $\ltwo{b_Fv}{\k}^2=\ltwo{b_Fv}{T_F^\k}^2 \leq h_{K} \ltwo{\jump{\diff\nabla u_h\cdot \mbf{n}}}{F}^2 $, the same bound being true on $\k'$, we obtain
	\begin{align}
		\ltwo{\jump{\diff\nabla u_h\cdot \mbf{n}}b_F^{1/2}}{F}^2
		\leq  & \sum_{\mathcal{K}\in\{\k,\k'\}}
		\left[(\ltwo{\Pi f -f}{T_F^\mathcal{K}}
		+\ltwo{(\Pi f+\nabla \cdot (\diff\nabla u_h))}{T_F^\mathcal{K}})\ltwo{b_Fv}{T_F^\mathcal{K}}
		\right.\nno \\
		&\left. \quad\quad\qquad+ \ltwo{\sqrt{a}\nabla e}{T_F^\mathcal{K}}\ltwo{\sqrt{a} \nabla (b_F v)}{T_F^\mathcal{K}} \right] \nno \\
		\leq &
		\sum_{\mathcal{K}\in\{\k,\k'\}}\left[
		h_{\mathcal{K}}^{1/2}(\ltwo{\Pi f -f}{T_F^\mathcal{K}}
		+\ltwo{(\Pi f+\nabla \cdot (\diff\nabla u_h))}{T_F^\mathcal{K}})
		\right.\nno\\
		&\quad\quad\left.
		+(\alpha^*\einvT h_F^{-1}  h_{\mathcal{K}} )^{1/2} h_F^{-1/2}\ltwo{\sqrt{a}\nabla e}{T_F^\mathcal{K}}\right]
		\ltwo{\jump{\diff\nabla u_h\cdot \mbf{n}}}{F}.\nno
		\nno
	\end{align}
	This, together with \eqref{eq:L2_norm_face_bubble},  gives
	\begin{align}
		\ltwo{h^{1/2}\jump{\diff\nabla u_h\cdot \mbf{n}}}{F}^2
		\leq &
		2C_{b,F}\!\!
		\sum_{\mathcal{K}\in\{\k,\k'\}}\left[
	 (\ltwo{h(\Pi f -f)}{T_F^\mathcal{K}}+\ltwo{h(\Pi f+\nabla \cdot (\diff\nabla u_h))}{T_F^\mathcal{K}})
		\right. \nno \\
		&\qquad\quad\quad\left.
		+ (\alpha^* \einvT )^{1/2} h_{F}^{-1}h_\mathcal{K} \ltwo{\sqrt{a}\nabla e}{T_F^\mathcal{K}}\right]^2.\nno
	\end{align}
Summing over all internal faces of $\k$, noting carefully that the involved domains do not overlap, we finally obtain
\begin{align}
\ltwo{{h}^{1/2}\jump{\diff\nabla u_h\cdot \mbf{n}}}{ \partial \k \cap  \Gamma_{\dint}}^2 \le & 6C_{b,F}\Big(\ltwo{h(\Pi f -f)}{\omega_\k}^2+\ltwo{h(\Pi f+\nabla \cdot (\diff\nabla u_h))}{\omega_\k}^2 \nno\\
&+\alpha^* \einvT \sum_{F\in\partial\k\cap\Gamma_\dint}
(h_{F}^{-{1}}h_{F^\perp})^2 \ltwo{\sqrt{a}\nabla e}{\omega_F}^2\Big),
\end{align}
as $h_{F^\perp}:=\max \{h_\k, h_{\k'}\}$, from which~\eqref{eq:lower bound face} now follows by employing~\eqref{eq:lower bound elem}.

The proof concerning the tangential jump residual is similar.
Given $F\in\Gamma_\dint$, we fix $v\in H^1(\omega_F)$ as the constant extension of $\jump{\tnabla u_h}|_{F}$ in the direction normal to $F$, so that $b_F v\in H^1_0(\omega_F)$.
Using the fact that $\text{curl}\, \nabla u =\mbf{0}$, we have the key observation
\begin{equation}\label{eq: curl grad 0}
\int_{\omega_F} \text{curl}\, (b_F v) \cdot \nabla u \ud \uu{x} = 0.
\end{equation}
Integration by parts and~\eqref{eq:curl_edge_bubble}, give
\begin{align}\label{eq:efficiency3}
	\ltwo{\jump{\tnabla u_h}b_F^{1/2}}{F}^2
	=& \int_{\omega_F} \text{curl}\, (b_F v) \cdot \nabla u_h \ud \uu{x}
	= \int_{\omega_F} \text{curl}\, (b_F v) \cdot \nabla (u_h-u) \ud \uu{x} \nno \\
	&\leq \alpha_*\sum_{\mathcal{K}=\k,\k'} \ltwo{\sqrt{a}\nabla e}{T_F^\mathcal{K}}\ltwo{ \text{curl}\,  (b_T v)}{T_F^\mathcal{K}} \nno\\
	&\leq \alpha_*\big((d-1)\einvT h_{\mathcal{K}} \big)^{1/2} h_F^{-1}\ltwo{\jump{\tnabla u_h}}{F}\sum_{\mathcal{K}\in\{\k,\k'\}} \ltwo{\sqrt{a}\nabla e}{T_F^\mathcal{K}}.\nno
\end{align}
Hence, by using~\eqref{eq:L2_norm_face_bubble}, we obtain
$$
	\ltwo{h^{1/2}\jump{\tnabla u_h}}{F}^2
	\leq 2 \alpha_*(d-1)\einvT C_{b,F} (h_F^{-1}h_{F^\perp})^{2} \ltwo{\sqrt{a}\nabla e}{\omega_F}^2.
$$
The required lower bound on the jump of the tangential gradient now follows by summing over all internal faces belonging to $\k$.
\end{proof}

By construction, we have $|\omega_F|\sim h_F^d$ for the patches $\omega_F$. The terms involving norms over  $\omega_F$ in Theorem~\ref{thm:lower_bound_face}  reflect and account for the presence of relatively small faces.
Indeed, linking the size of $\omega_F$ to that of $F$, instead of that of the element $\k$, allows very large ratios $h_\k/h_F$.

If, on the other hand,
the size of each of the element's face is comparable to that of the element itself,  then we may modify the construction of the face bubble of Definition~\ref{def:edge_bubble} by moving the opposite vertex all the way to the  centre of star-shapedness of $\k$.  In such a case,
$h_F^{-1} h_{F^\perp} \ltwo{\sqrt{a}\nabla e}{\omega_F}\lesssim  \ltwo{\sqrt{a}\nabla e}{\k\cup\k'}$.
Thus, for meshes with potentially many but regular hanging nodes, the new flux-residuals' lower bounds revert to the classical ones, as encapsulated in the following corollary.
\begin{corollary}\label{cor:lower_regular}
Under the assumption of Theorem~\ref{thm:lower_bound_face}, if, moreover, for every $\el$ and $F\in\partial\k$  it holds $h_F\ge\Csh^{-1}h_K$, then
	\begin{align}\label{eq:lower bound face reg}
		&\ltwo{{h}^{1/2}\jump{\diff\nabla u_h\cdot \mbf{n}}}{ \partial \k \cap  \Gamma_{\dint}}^2\\
		 &\qquad\le  6C_{b,F}\Big(
		 (2C_{b,\k}
		\big(\Csh^2+\einv) (\alpha^*)^2+ \Csh\alpha^*\einv
		)
		\ltwo{\sqrt{a}\nabla e}{\omega_\k}^2
		 \nno\\
		&\qquad\qquad\qquad+\sum_{\k'\in\omega_\k}(1+2C_{b,\k} )O_{\k',E}^2\Big),\nno
	\end{align}
	and
	\begin{equation}\label{eq:lower bound tang reg}
		\ltwo{h^{1/2}\jump{\tnabla u_h}}{\Gamma_{\dint}}^2\le
 		2C_{b,F}  (d-1)\Csh\alpha_*\einv
		\ltwo{\sqrt{a}\nabla e}{\omega_\k}^2.
	\end{equation}
\end{corollary}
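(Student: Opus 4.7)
The approach is to revisit the proof of Theorem \ref{thm:lower_bound_face} using a modified face bubble function, exploiting the additional assumption that $h_F \geq \Csh^{-1}h_\k$ for every $F \in \partial\k$. Specifically, I would replace the simplex $T_F^\k$ of Definition \ref{def:edge_bubble} by the one whose opposite vertex coincides with the centre of star-shapedness of $\k$. Because of star-shapedness, this vertex is at distance $\geq \Csh^{-1}h_\k$ from $F$, and hence the enlarged $T_F^\k$ is still contained in $\k$. The crucial consequence is that now $h_{T_F^\k}\sim h_\k\sim h_F$, whereas the original construction had $h_{T_F^\k}=h_F$ possibly much smaller than $h_\k$.

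With this enlarged simplex I would re-prove the counterparts of \eqref{eq:H1_norm_edge_bubble} and \eqref{eq:curl_edge_bubble}. The $L_2$ face-norm equivalence \eqref{eq:L2_norm_face_bubble} is unchanged since it only involves $F$. For the gradient/curl estimates, the key observation is that for the constant normal extension of $v\in\mathcal{P}_p(F)$ one has $\ltwo{v}{T_F^\k}^2 \leq h_\k \ltwo{v}{F}^2$ (the cross-section area only decreases from $|F|$ towards the apex, so the integral is bounded by the height times $\ltwo{v}{F}^2$). Combining with the inverse inequality \eqref{eq:inverse_h1} on the new $T_F^\k$ applied to $b_F v$ yields $\ltwo{\nabla(b_F v)}{T_F^\k}^2 \leq C\einv h_\k^{-1} \ltwo{v}{F}^2$ and analogously $\ltwo{\mathrm{curl}(b_F v)}{T_F^\k}^2 \leq C(d-1)\einv h_\k^{-1} \ltwo{v}{F}^2$. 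The factor $h_\k h_F^{-2}$ appearing in Lemma \ref{edgebubble} is thus replaced by a single $h_\k^{-1}$.

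The proof then proceeds verbatim as in Theorem \ref{thm:lower_bound_face}, only with the improved bubble bounds. The factor $(h_F^{-1}h_{F^\perp})^2$ that plagued \eqref{eq:lower bound face} and \eqref{eq:lower bound tang} is absent: it is absorbed into an $h_\k^{-1}$ from the inverse inequality, which is exactly balanced by the weight $h=h_\k$ on the left-hand side, and the residual geometric ratios are controlled by $\Csh$ and $\rho$ (using $h_{F^\perp} \leq \rho h_\k$ and $h_F \geq \Csh^{-1}h_\k$). Since, by Assumption \ref{A3}, the number of faces per element is uniformly bounded, summing the per-face estimates over $\partial\k \cap \Gamma_\dint$ replaces the union of face patches by $\omega_\k$ with a bounded multiplicative constant. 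Finally, applying Theorem \ref{thm:lower_bound_element} to each neighbouring element in $\omega_\k$ bounds the elemental residual contribution in terms of $\ltwo{\sqrt{a}\nabla e}{\omega_\k}^2$ and the data oscillation $\sum_{\k'\in\omega_\k} O_{\k',E}^2$, yielding precisely \eqref{eq:lower bound face reg} and \eqref{eq:lower bound tang reg}.

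The main technical point to verify is that the enlarged simplex $T_F^\k$ is shape-regular with constants depending only on $\Csh$ and $\rho$, which is what justifies applying the inverse inequality \eqref{eq:inverse_h1} with a uniform $\einv$. For $d=2$ this is immediate once $F$ is a segment of length $\geq \Csh^{-1}h_\k$ and the opposite vertex lies at distance $\geq \Csh^{-1}h_\k$ from it. For $d=3$, it follows from the additional hypothesis in Assumption \ref{A3} that each internal face $F$ is shape-regular with inradius $\geq \Csh^{-1}h_F \sim h_\k$, so that joining $F$ to the centre of star-shapedness produces a non-degenerate tetrahedron whose shape-regularity constant depends only on $\Csh$.
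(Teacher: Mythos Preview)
Your proposal is correct and follows exactly the approach indicated in the paper: the text immediately preceding the corollary states that one should ``modify the construction of the face bubble of Definition~\ref{def:edge_bubble} by moving the opposite vertex all the way to the centre of star-shapedness of $\k$'', which is precisely what you do. Your treatment is in fact more detailed than the paper's, which leaves the verification of shape-regularity of the enlarged simplex and the resulting replacement of $h_\k h_F^{-2}$ by $h_\k^{-1}$ in the bubble estimates implicit.
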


\begin{remark}
Corollary~\ref{cor:lower_regular} holds in the setting of fully shape-regular meshes allowing for the sub-mesh auxiliary mesh construction, cf. Section~\ref{submesh}. Hence, Corollary~\ref{cor:lower_regular} together with Theorem~\ref{thm:lower_bound_element} and Theorem~\ref{thm:upperbound} establishes the  reliability and efficiency of the classical residual error estimator for general fully shape-regular polytopic meshes with multiple hanging nodes. The analysis, also in this case, differs  from the classical one in that the finite element space used to control the non-conforming error, being based on the auxiliary mesh, is \emph{not} a subspace of the discrete solution space $\fes$. Moreover, the resulting error bound is as explicit as the classical bound because, in this case, the auxiliary mesh quality is fully controlled by that of the polytopic mesh. The element bubble construction is also new and accounts for the polytopic nature of the mesh.

On the other hand, controlling the flux residuals in the extreme case of possibly unbounded non shape-regular interfaces requires further new ideas. Whenever it is possible to construct a face bubble function $b_F$ such that the following bound
$$
	\ltwo{\nabla (b_F v )}{T_F^\k}^2
		\leq  \einvT h_K^{-1} 	\ltwo{v}{F}^2,
$$
holds true,  the lower bounds  of the flux residuals \eqref{eq:lower bound face} and \eqref{eq:lower bound tang} will be independent of $h_F^{-1} h_{F^\perp}$.  An alternative approach could be to consider bubble functions constructed on a neighbouring set of structured elements. Then, the bubble functions $b_F$ will be independent of the individual  face size and the number of elements. For instance, this is  the approach used in~\cite{KoptevaLower} to derive a lower bound of the flux residual of the FEM employing structured anisotropic triangular meshes.  However, the construction of such face bubble functions  for the general-shaped polytopic meshes considered in this work is highly non-trivial.
\end{remark}

\section{Numerical experiments}\label{num_ex}
We present two numerical examples testing the new \emph{a posteriori} error estimator. With the first example we test the impact of polygonal elements with a large number of small faces on the effectivity index. With the second, we test the performance of the estimator within a non-standard adaptive algorithm. In all cases, we set $C_\sigma  = 10$.
\subsection{Example 1}
We construct a sequence of  polygonal meshes containing $114$, $498$, $2063$, $8912$, and $32768$ elements obtained by successive agglomeration of a very fine triangular background mesh made of $10^6$ elements. Each of the polygonal elements contains at least $50$ edges, see Figure \ref{crazy_mesh} for an illustration.
\begin{figure}[!tb]
	\begin{center}
		\begin{tabular}{cc}
			\includegraphics[width=0.41\linewidth]{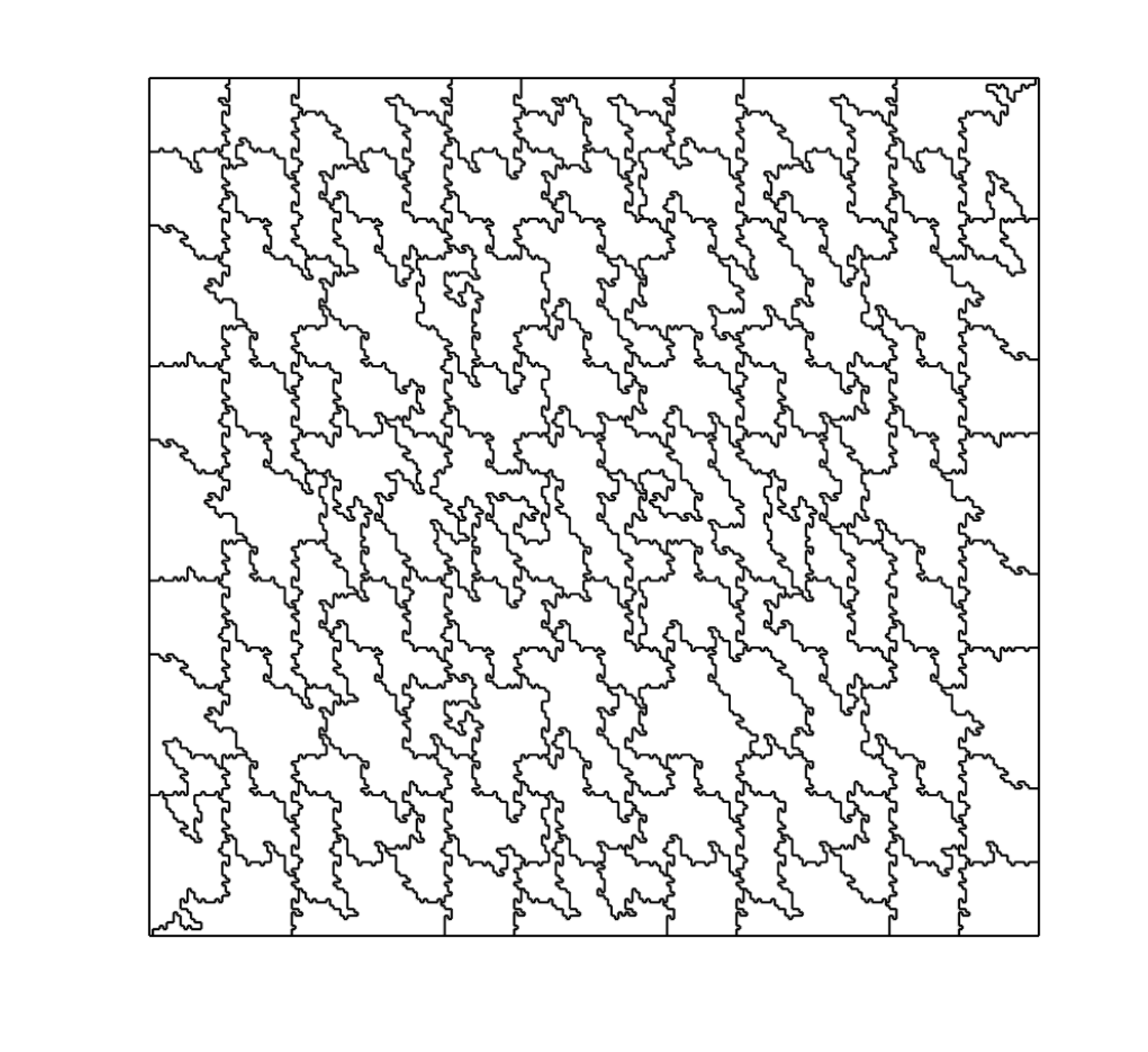} &
			\includegraphics[width=0.4\linewidth]{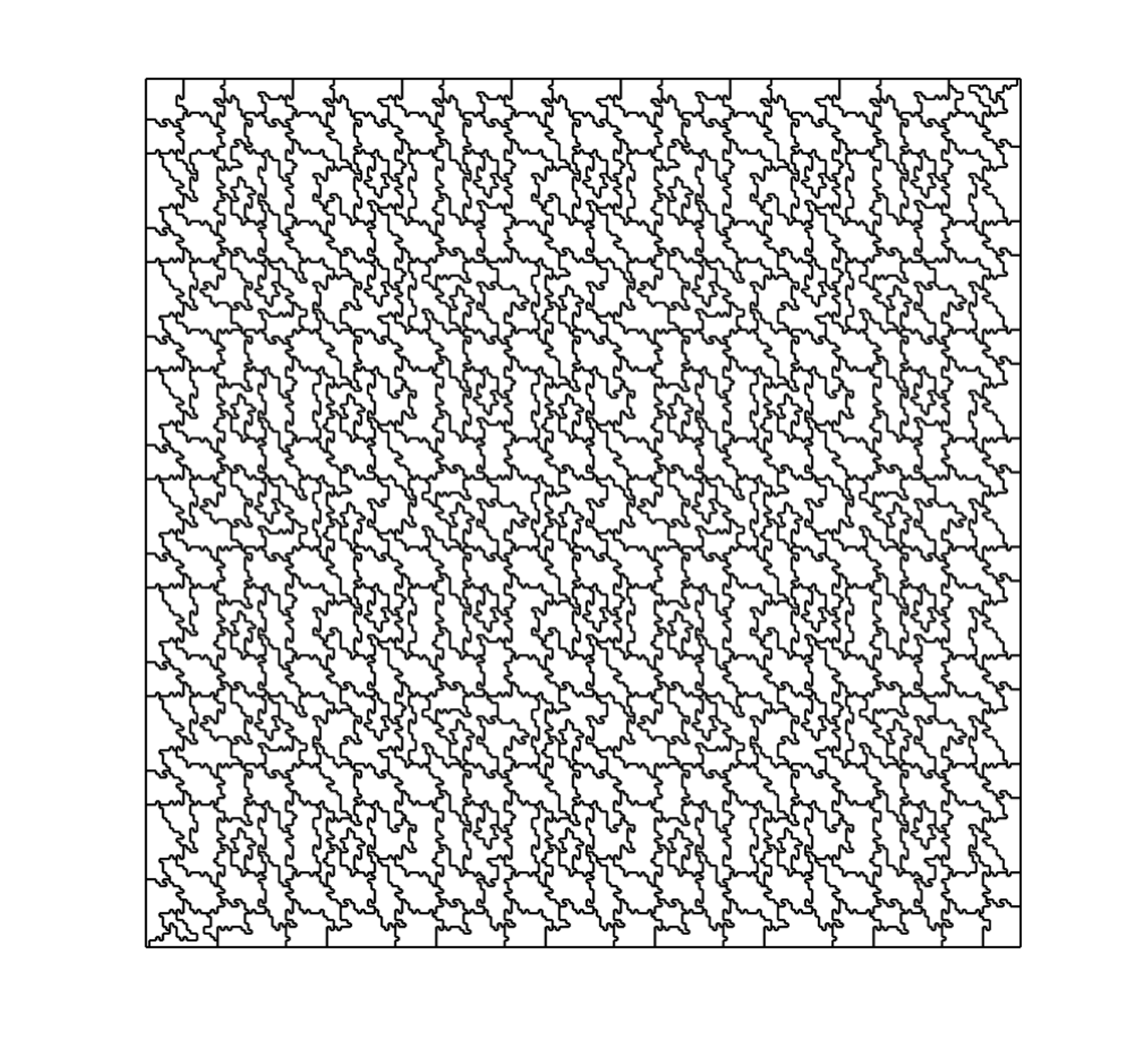}
		\end{tabular}
	\end{center}
	\caption{Example 1. Sample meshes with 114 (left) and 498 (right) polygonal elements obtained by agglomeration of a fine triangular mesh made of one million elements.}\label{crazy_mesh}
\end{figure}

We consider the problem \eqref{Problem}, with   $\diff = I_{2\times 2}$ and  $\Omega := (-1,1)^2$. The Dirichlet boundary conditions and the source term $f$ are determined by the exact solution $ u = \sin(\pi x)\sin(\pi y)$.
Numerical results for $p=1,2,3,4$ are displayed in Figure~\ref{Ex:error_uniform_refinement}. The observed convergence rate of both the error and the estimator is $\mathcal{O}(DoFs^{-\frac{p}{2}})$, i.e., optimal in terms of the total number of degrees of freedom $DoFs$. Moreover, the effectivity index is  bounded between 1.2 and 2.6, hence showing that  efficiency is not affected by the complexity of the element shapes. This numerical observation reflects that Assumption \ref{A3} may not be necessary.
\begin{figure}[!tb]
	\begin{center}
		\begin{tabular}{cc}
			\includegraphics[width=0.4\linewidth]{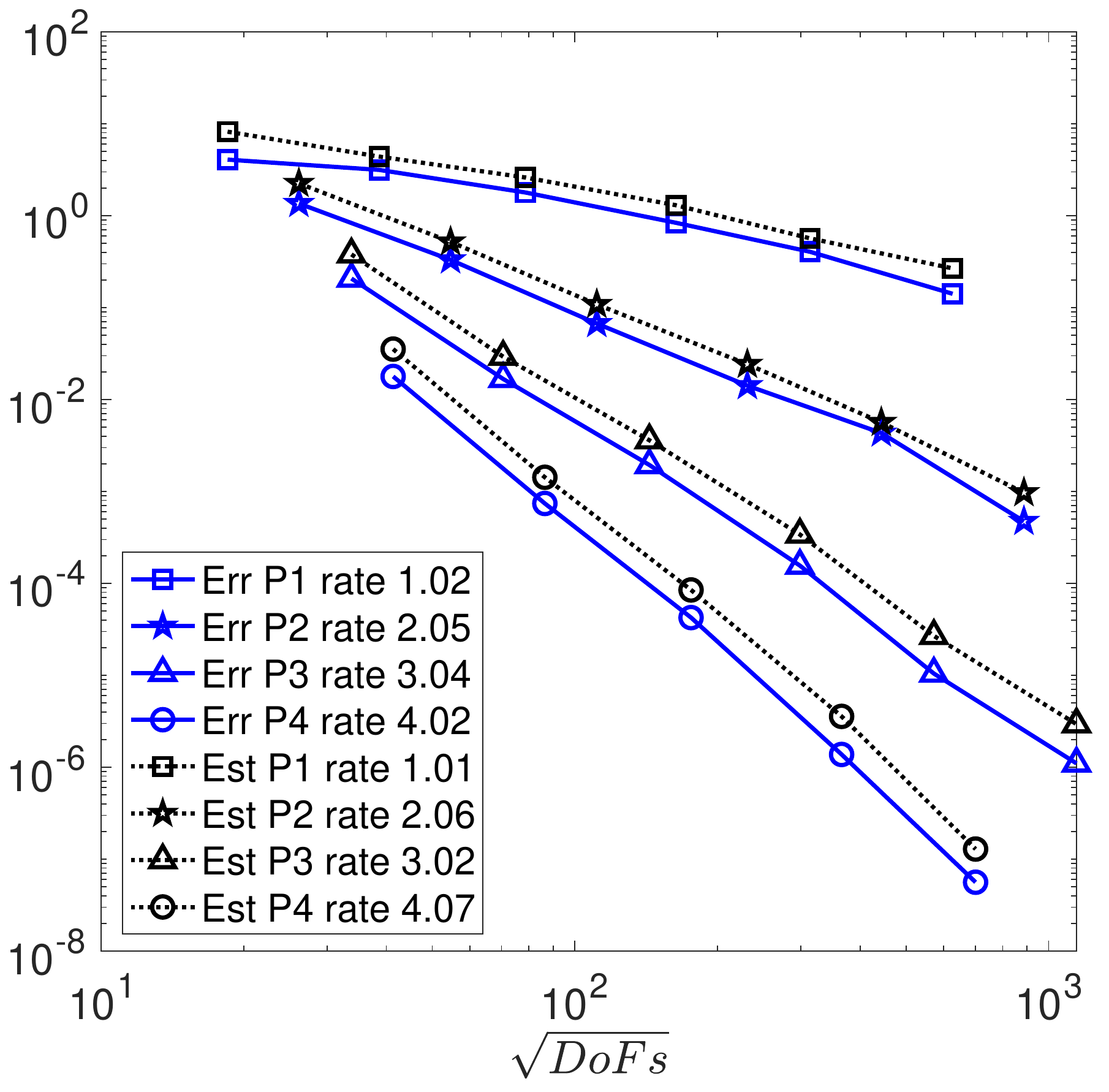} &
			\includegraphics[width=0.4\linewidth]{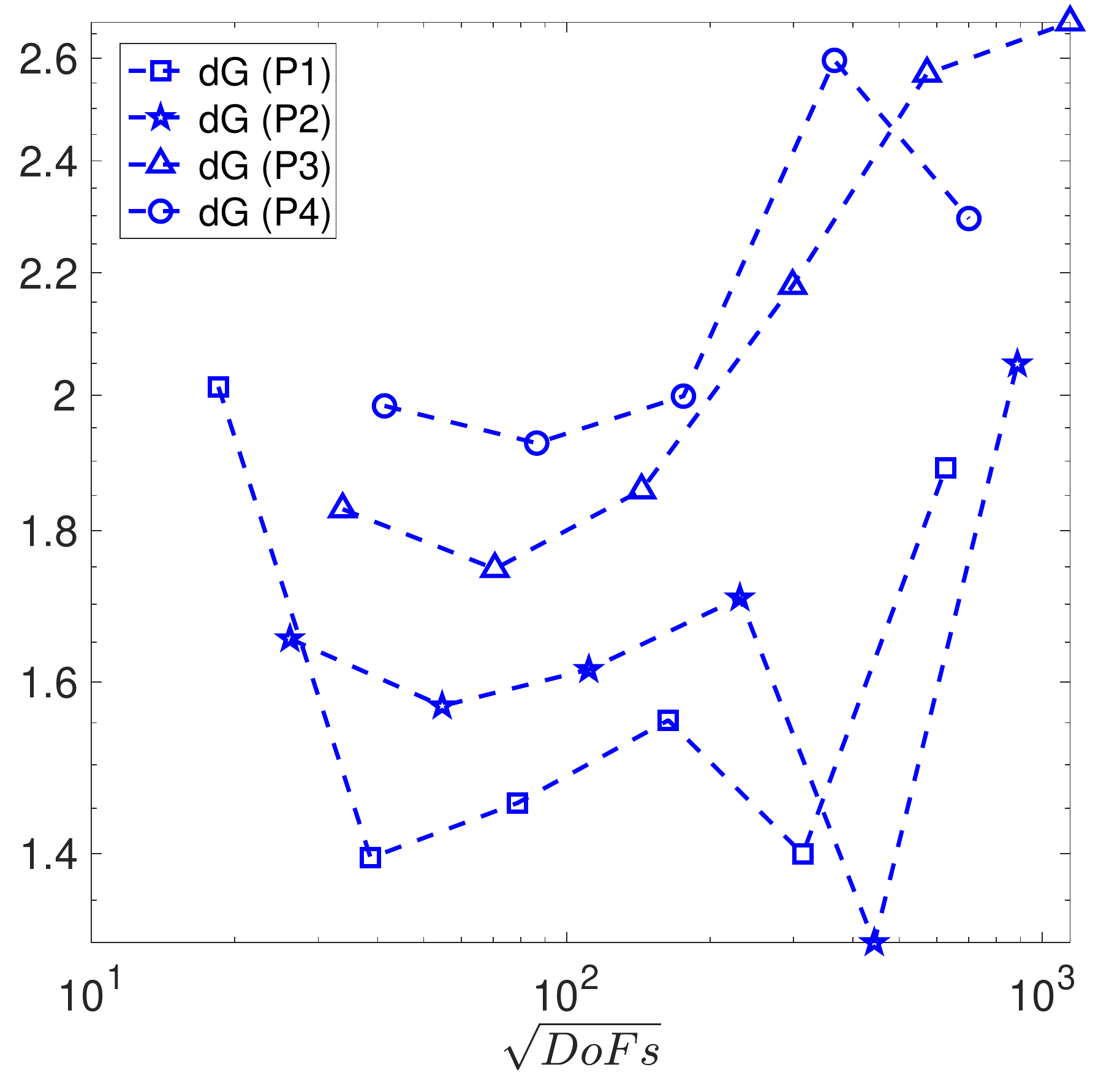}
		\end{tabular}
	\end{center}
	\caption{Example 1. Convergence history of the error and error estimator of Equation~\ref{full bound} for $p=1,2,3,4$ (left) and respective effectivity measured as the ratio estimator over error (right).}\label{Ex:error_uniform_refinement}
\end{figure}

Next, we compare the percentage contribution of the different components to the total estimator. Setting $R_X:=(\sum_{K\in\mathcal{T}}R_{K,X}^2)^{1/2}$ for $X\in\{E,N,J,T\}$, in Table \ref{ex1:table Comparison} we provide the percentage of total element residual $R_{E}$, total jump residual $R_{J}$, total jump of the normal flux residual $R_{N}$, and total jump of the tangential flux residual $R_{T}$ for $p=1,2$. For the coarse meshes of Figure \ref{Ex:error_uniform_refinement}, the element residual dominates the total estimator, followed by $R_{J}$. For finer meshes, we observe significant contribution by $R_{E}$, $R_{J}$,  and combined $R_{T}$ and $R_{N}$.
\begin{table}[!htb]
	\centering
		\begin{tabular}{|c|c|c|c|c||c|c|c|c|}
			\hline
&
			\multicolumn{4}{|c||}{$p=1$}
			&
			\multicolumn{4}{|c|}{$p=2$}  \\
			\hline
			\hline
			$\#$ elem  & $R_{E}$ & $R_{J}$ &   $R_{N}$&$R_{T}$ & $R_{E}$ & $R_{J}$ &   $R_{N}$&$R_{T}$   \\
			\hline
			$114$ & $64\%$  & $18\%$ &   $9\%$ &  $9\%$ & $56\%$  & $26\%$ &   $9\%$ &  $9\%$  \\
			\hline
			$498$ & $59\%$  & $23\%$ & $9\%$  & $9\%$ &  $45\%$  & $36\%$ & $9\%$  & $10\%$ \\
			\hline
			$2063$ & $36\%$  & $37\%$ & $13\%$  & $14\%$  & $45\%$  & $33\%$ & $10\%$  & $12\%$   \\
			\hline
			$8912$ & $32\%$ & $37\%$ &  $14\%$ & $17\%$ &$43\%$ & $31\%$ &  $11\%$ & $15\%$  \\
			\hline
			\hline
		\end{tabular}
	\caption{Example 1. Estimator's components percentage contributions to the dG error for $p=1,2$: element residual $R_{E}$, jump residual $R_{J}$,  jump of the normal flux residual $R_{N}$, and jump of the tangential flux residual $R_{T}$.}
	\label{ex1:table Comparison}
\end{table}
Further, to highlight the importance of the presence of $R_T$ in the new estimator presented in this work, we compare it to the \emph{a posteriori} error estimator that can be derived using standard techniques from \cite{KP}, which does not contain the  tangential flux jump $R_{T}$. As already mentioned in Remark \ref{KP_noKP}, it is immediate to bound $R_{K,T}$ from $R_{K,J}$ through an inverse estimate on each face $F$, giving $R_{\k,T} \lesssim \frac{h_K}{\rho_F^{}}R_{K,J}$, with $\rho_F$ denoting the inscribed radius of the face $F$. Clearly, for small faces, we have $\rho_F\le h_F\ll h_K$, showcasing that $R_{K,T}$ is theoretically sharper than $R_{J,T}$. In Figure \ref{Ex: comparison to the classical estimator}, we present the error, the estimator from \eqref{full bound} and the \emph{classic} estimator whereby $R_{\k,T}$ is replaced by $\frac{h_K}{\rho_F^{}}R_{K,J}$, for $p=1,2$. The superiority of the estimator presented in this work is evident for coarse meshes with large ratio $\frac{h_K}{h_F}$.  We note that the jump terms account for more than 80\% of the classical error estimator, thus indicating that the term $R_{J,T}$ is indeed responsible for the relative over-estimation of the error. This confirms the theoretical intuition and showcases the practicality of the estimator proven in this work.
\begin{figure}[!htb]
	\begin{center}
		\begin{tabular}{cc}
			\includegraphics[width=0.41\linewidth]{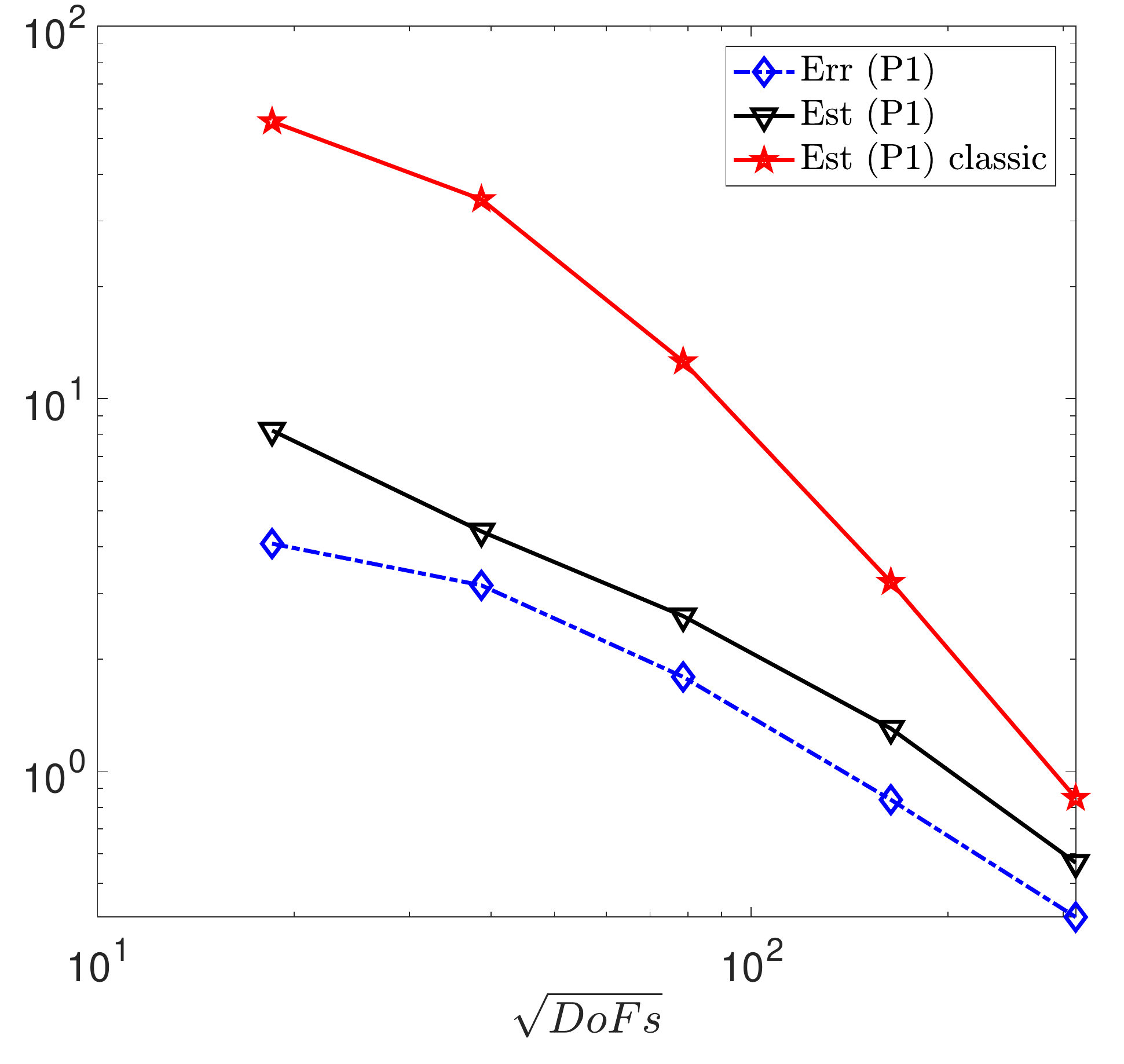} &
			\includegraphics[width=0.41\linewidth]{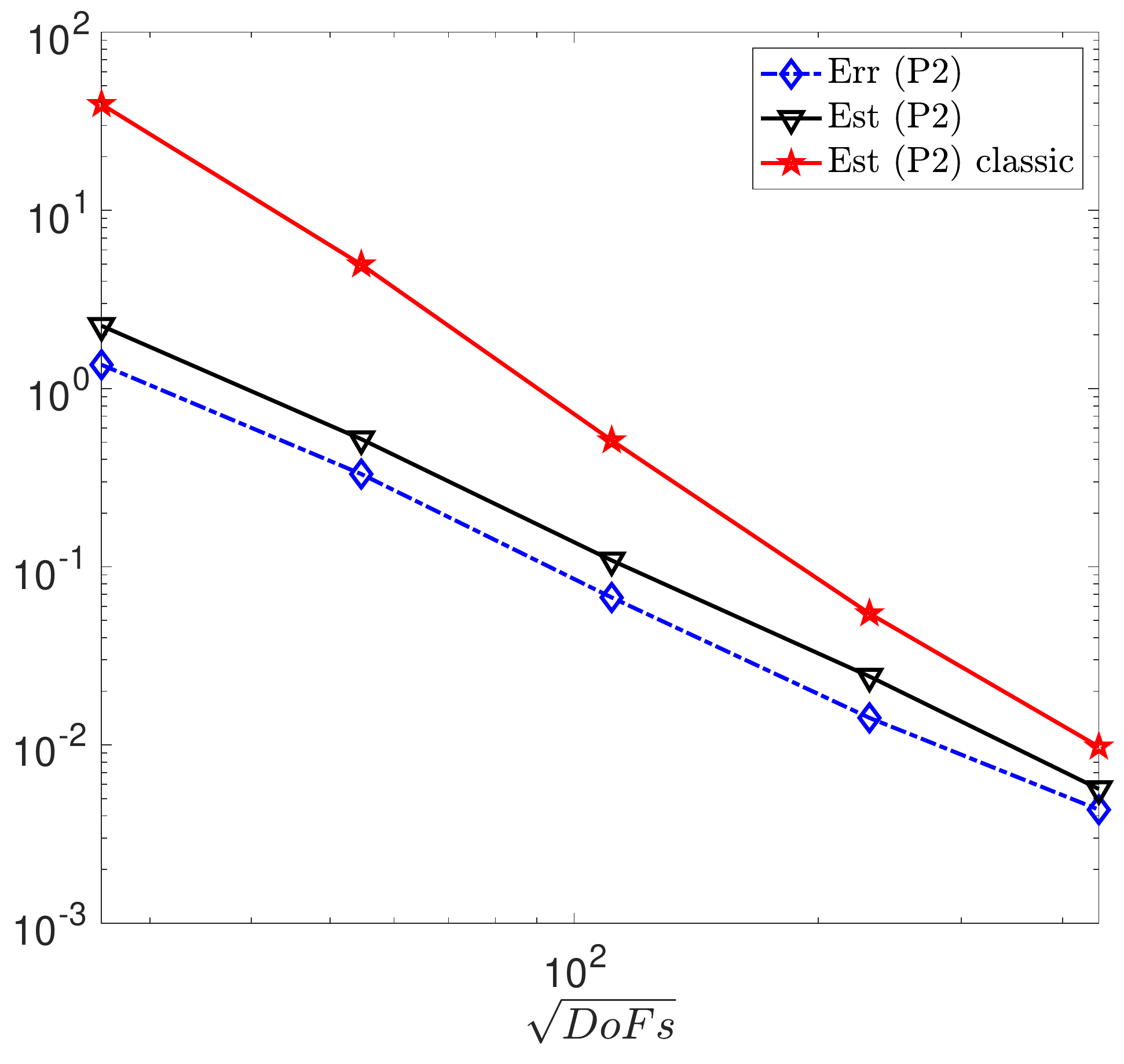}
		\end{tabular}
	\end{center}
	\caption{Example 1. Convergence history of the error, error estimator of Equation~\ref{full bound} and the classical estimator for $p=1,2$.}\label{Ex: comparison to the classical estimator}
\end{figure}

\subsection{Example 2}
We test a new adaptive algorithm driven by the error estimator from Section~\ref{apost_sec}. Starting from a relatively coarse simplicial mesh, we use the estimator~\eqref{full bound} to mark simplicial elements for refinement through a bulk-chasing criterion (also known as D\"orfler marking), and also mark \emph{pairs} of elements for agglomeration based on the size of the jump residual terms on elemental interfaces. Refinement of simplicial elements is performed via a newest vertex bisection algorithm. In the agglomeration step, general, polygonal meshes will be generated. In successive iterations, polygonal meshes which are marked for refinement are subdivided into either a finer polygonal mesh or a simplicial mesh, depending on their level of agglomeration. For simplicity, we do not consider the data oscillation terms. The adaptive algorithm can thus be described as:
\[
\text{SOLVE} \longrightarrow \text{ESTIMATE} \longrightarrow  \text{MARK} \longrightarrow \text{REFINE/AGGLOMERATE}.
\]

We consider the problem~\eqref{Problem} with   $\diff = I_{2\times 2}$ on  $\Omega := (-1,1)^2 \setminus (0,1)\times(-1,0)$. The Dirichlet boundary conditions and the source term $f$ are determined by the exact solution
\begin{equation*}
\begin{aligned}
u = &r^{2/3} \sin(2\psi/3) + \exp(-1000((x-0.5)^2+(y-0.25)^2 )) \\
&+ \exp(-1000((x-0.5)^2+(y-0.75)^2 )),
\end{aligned}
\end{equation*}
which has a point singularity at  the origin. We  test the adaptive dG algorithm described above with $p=1,2,3$, with D\"orfler's marking strategy $25\%$ for refinement and the maximum marking strategy $5\%$ for agglomeration. We point out that the agglomeration step is driven by the jump terms $R_{F,N}$, $R_{F,J}$, and $R_{F,T}$ for all faces $F \subseteq \partial\k_i\cap\partial\k_j$ on the meshes interface between element $\k_i$ and $\k_j$.

The performance of the proposed adaptive algorithm is showcased in Figure \ref{Ex:error}. The convergence rates of both error and estimator are optimal in terms of the total number of degrees of freedom ($DoFs$), which is $\mathcal{O}(DoFs^{-\frac{p}{2}})$. Further, on coarse mesh levels, the mesh agglomeration dominates the mesh refinement. Consequently, the number of $DoFs$ is initially reduced by the adaptive algorithm while the error is also reduced. Another important observation is that the coarse mesh level's effectivity index still seems quite reasonable, namely 2 to 3 times greater than the asymptotic value. This is in spite of the presence of  some very large polygonal elements next to small shape-regular triangles. These can be seen, for example, in the sequence of meshes produced by the adaptive algorithm  with $p=3$ shown in Figure~\ref{Ex:mesh}.

Finally, we perform a comparison between the adaptive algorithm with and without agglomeration for $p=2,3$, using the same marking parameter and stopping criterion. The results are presented in Figure \ref{Ex:compare_PR_AR}. Clearly, agglomeration helps reducing DoFs almost without influencing the accuracy on coarse meshes. As the meshes are refined, the advantage of polygonal elements is gradually reduced, as expected.
\begin{figure}[!tb]
	\begin{center}
		\begin{tabular}{ccc}
			\includegraphics[width=0.3\linewidth]{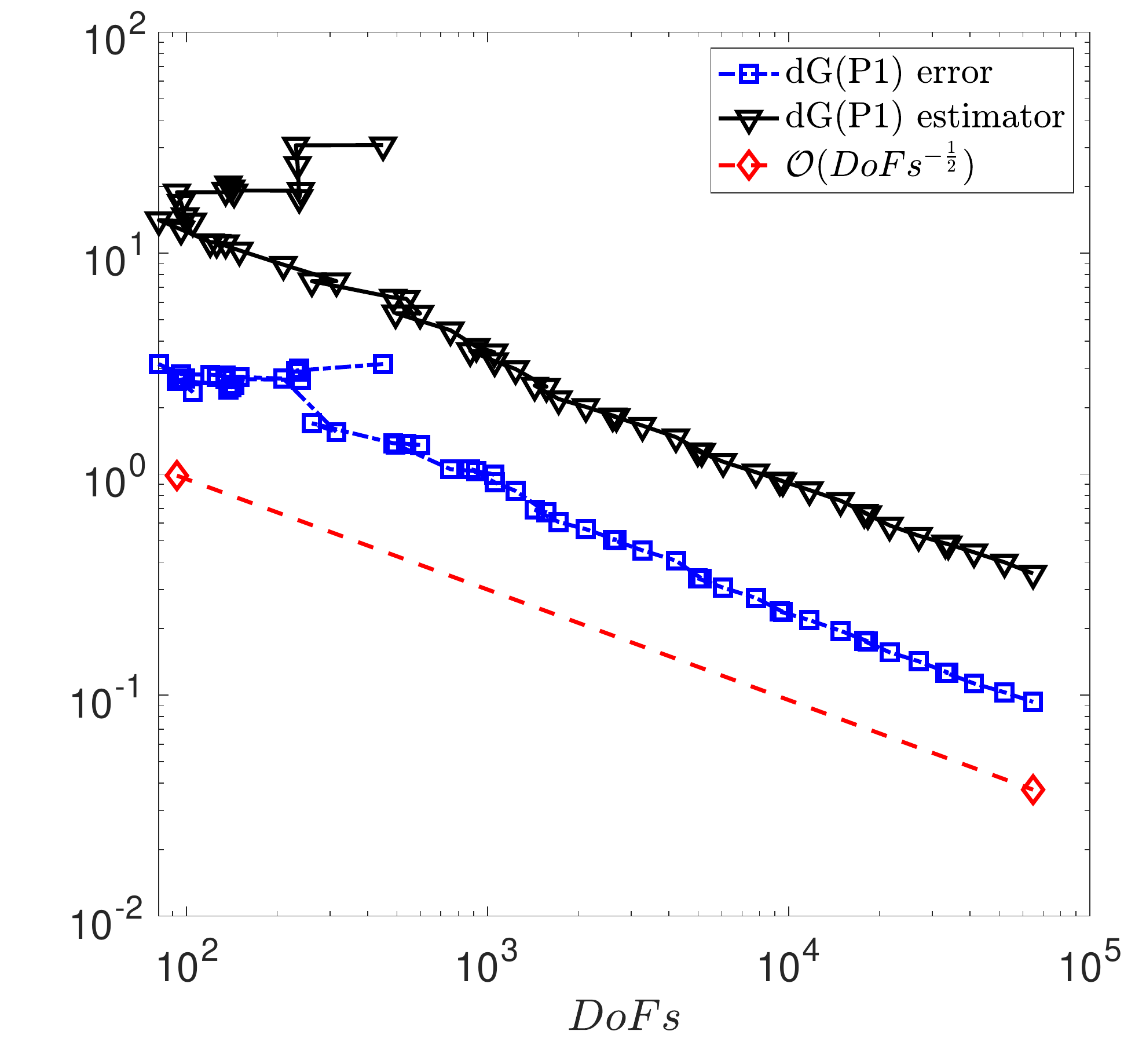} &
			\includegraphics[width=0.3\linewidth]{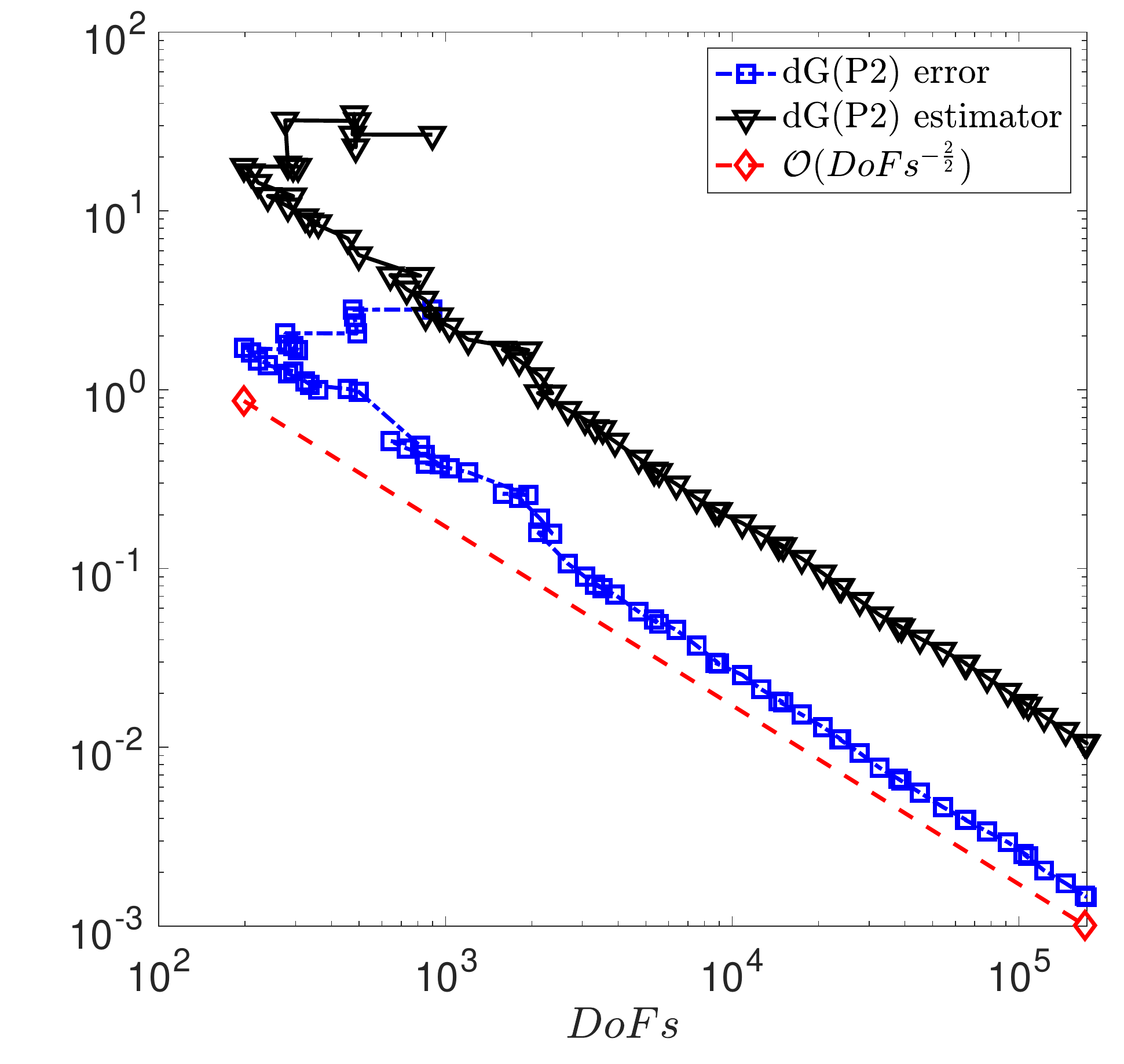} &
			\includegraphics[width=0.3\linewidth]{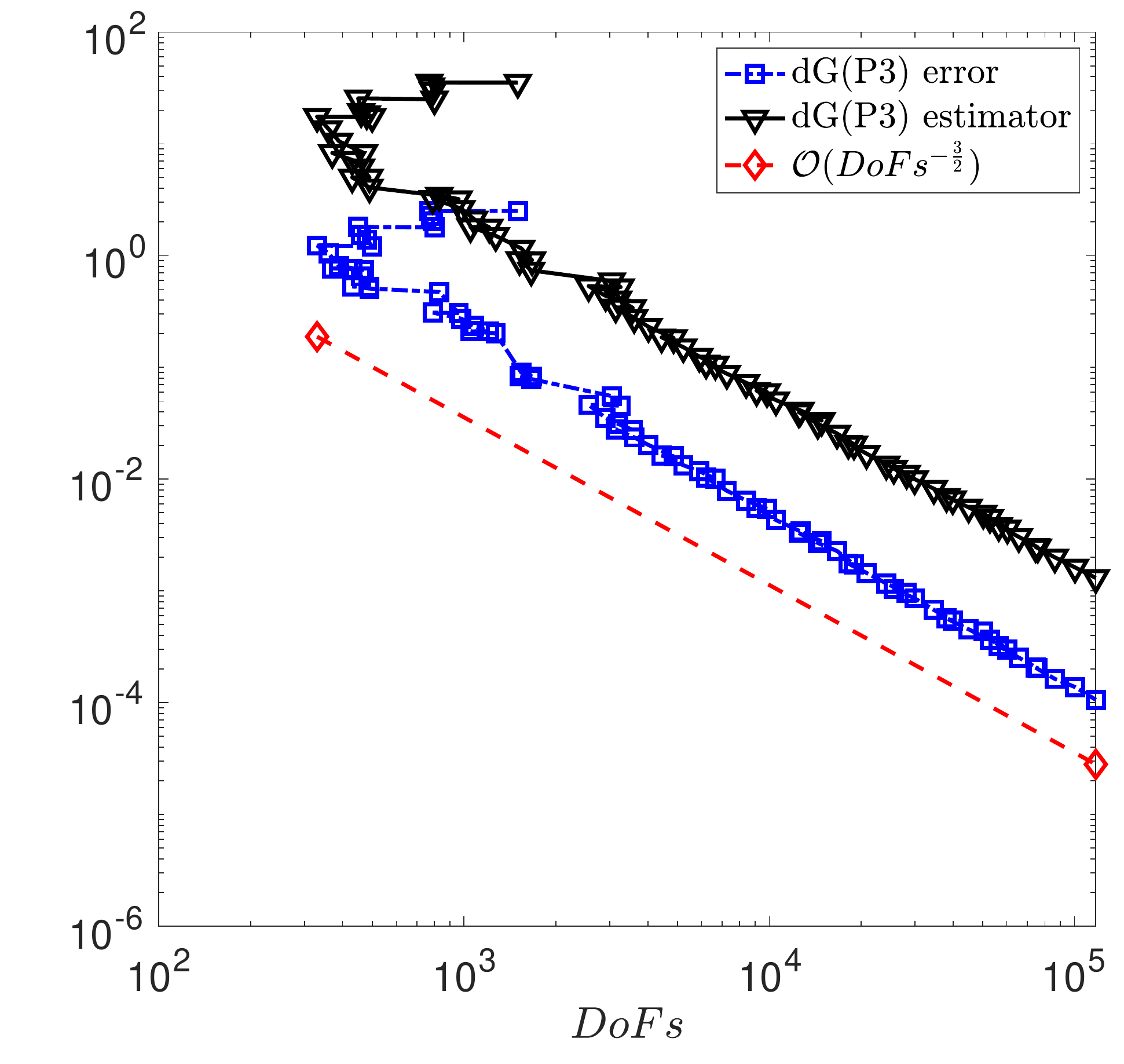} \\
			\includegraphics[width=0.3\linewidth]{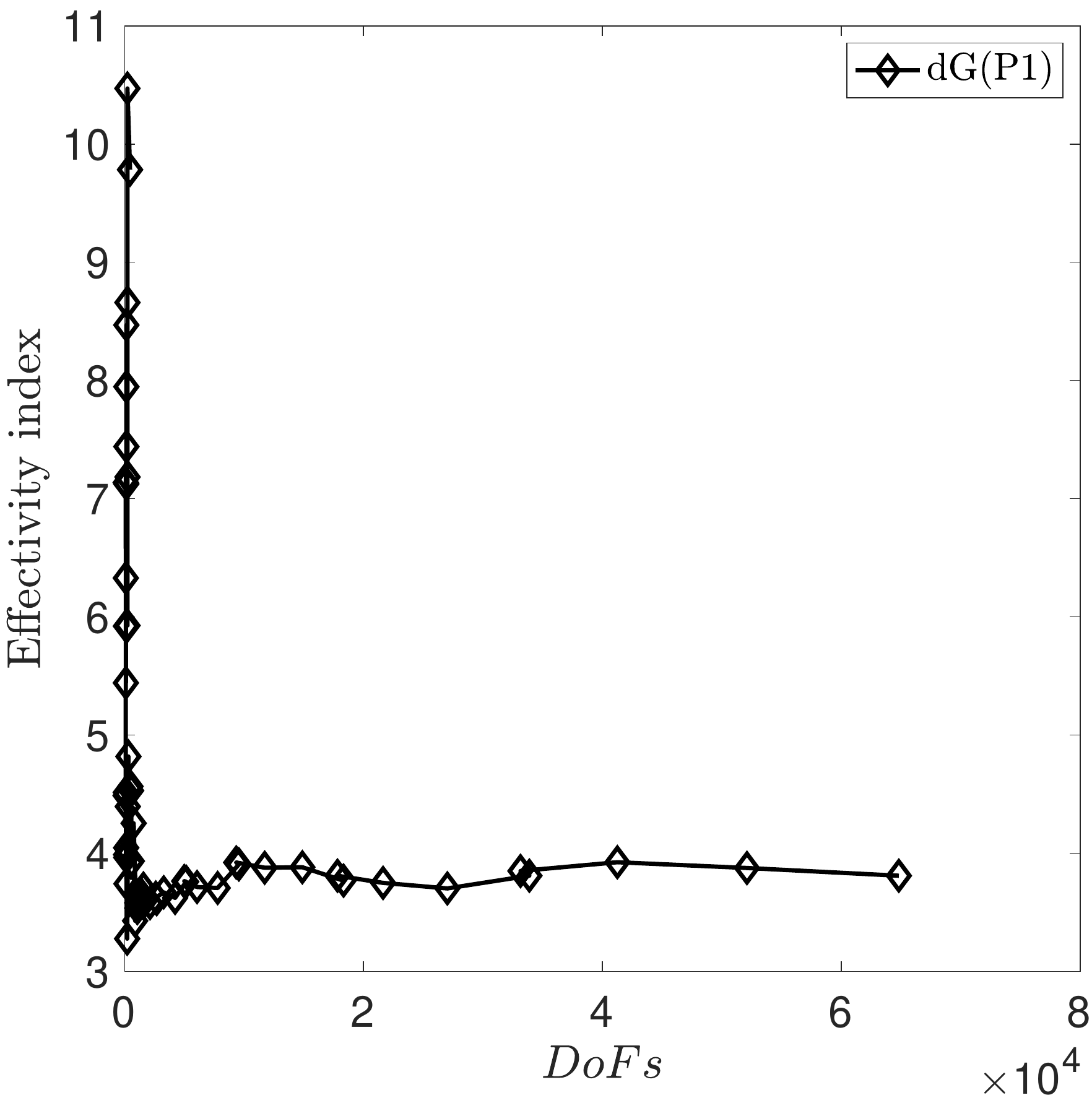} &
			\includegraphics[width=0.3\linewidth]{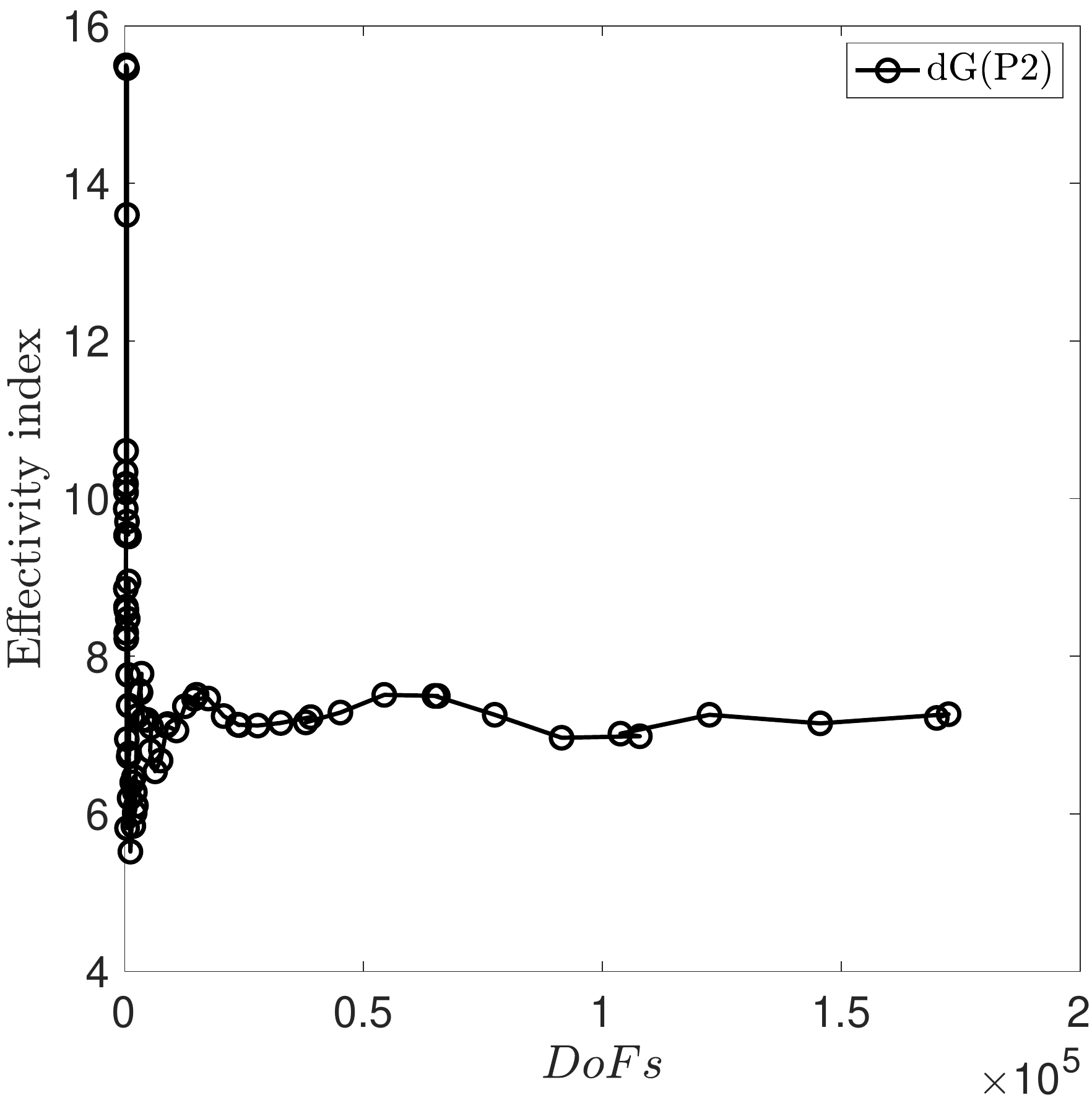} &
			\includegraphics[width=0.3\linewidth]{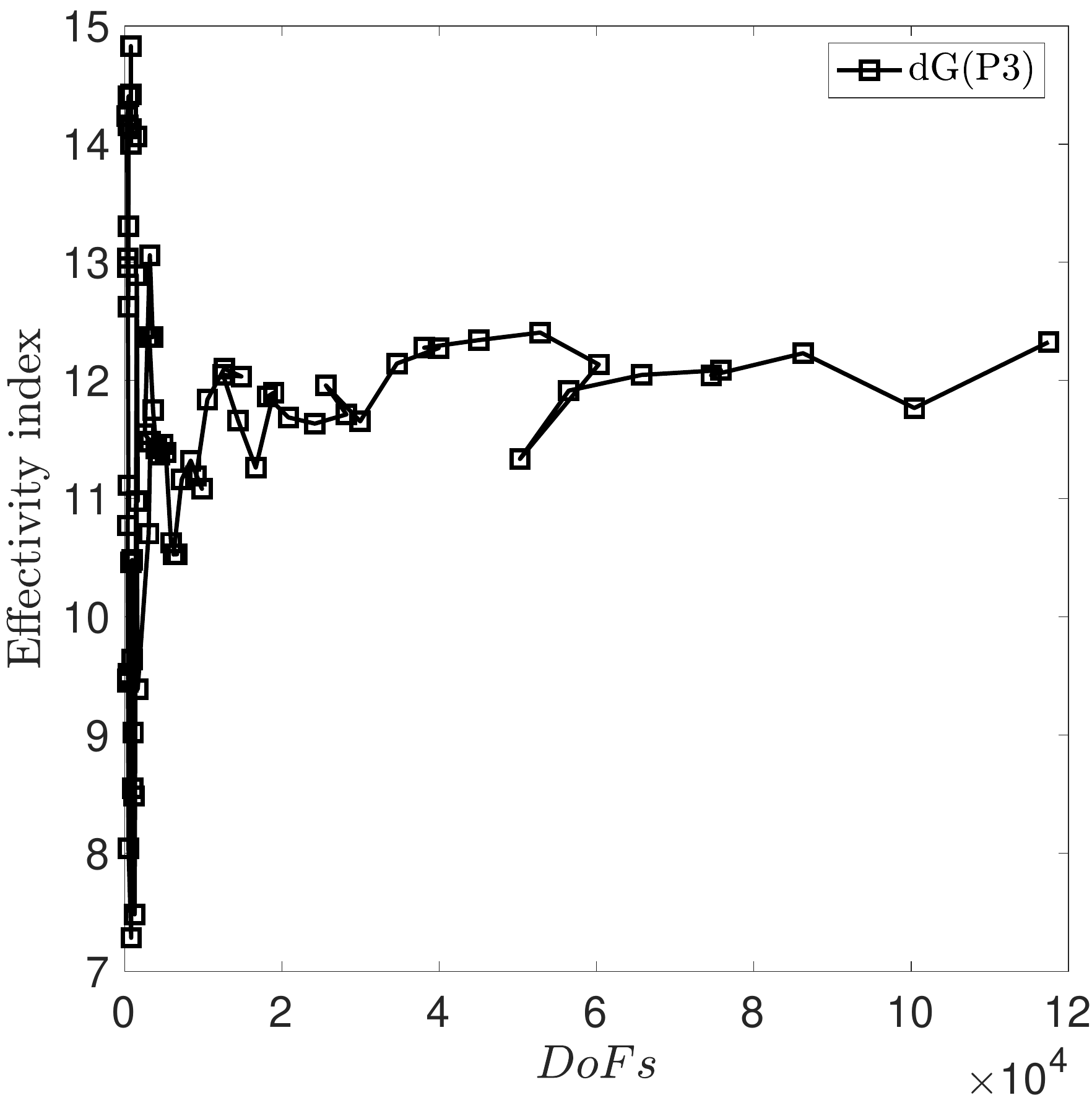}
		\end{tabular}
	\end{center}
	\caption{Error, estimator (top) and effectivity index (bottom) of the new adaptive polytopic dG method with $p=1,2,3$.}\label{Ex:error}
\end{figure}

\begin{figure}[!tb]
	\begin{center}
		\begin{tabular}{ccc}
			\includegraphics[width=0.3\linewidth]{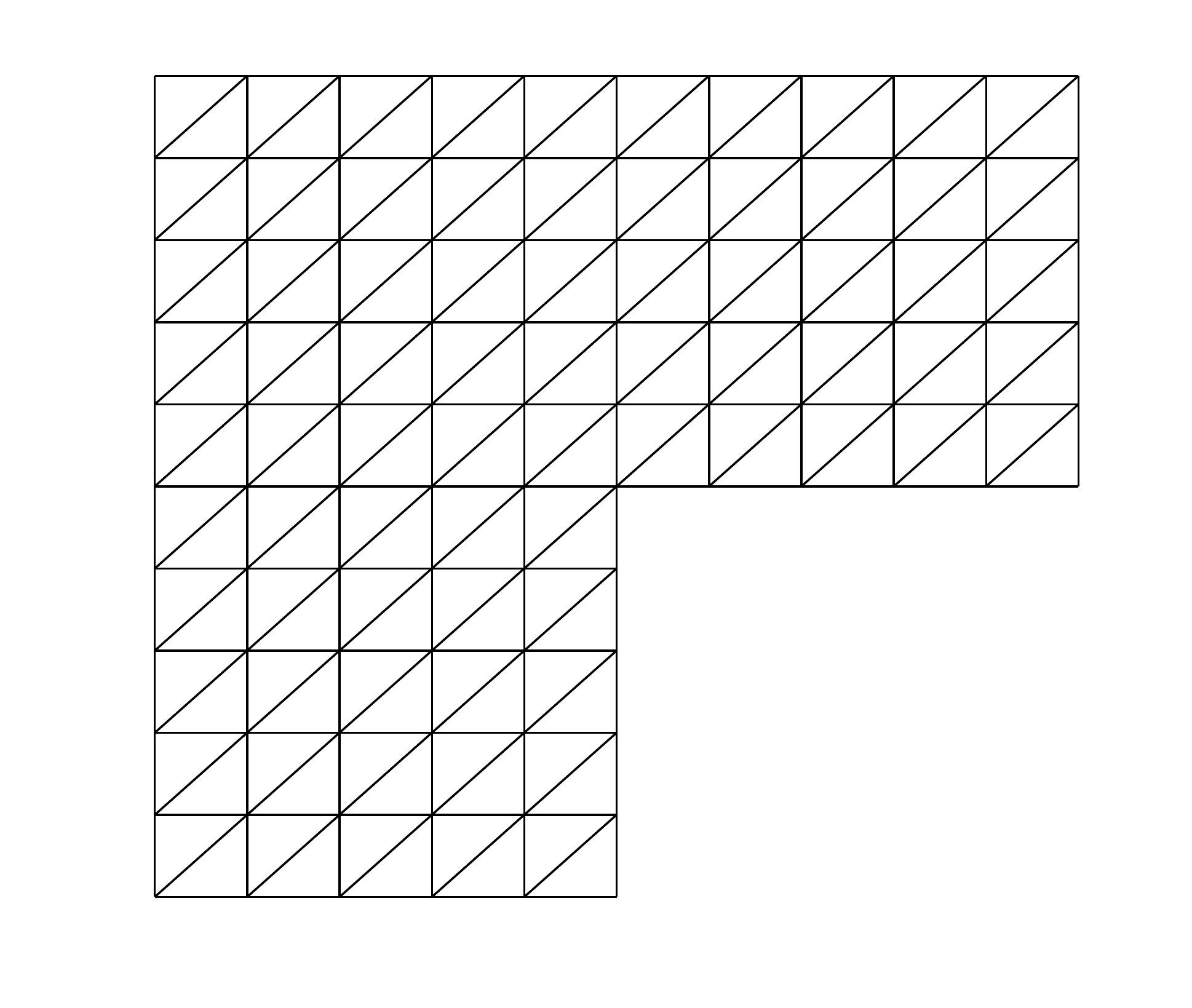} &
			\includegraphics[width=0.3\linewidth]{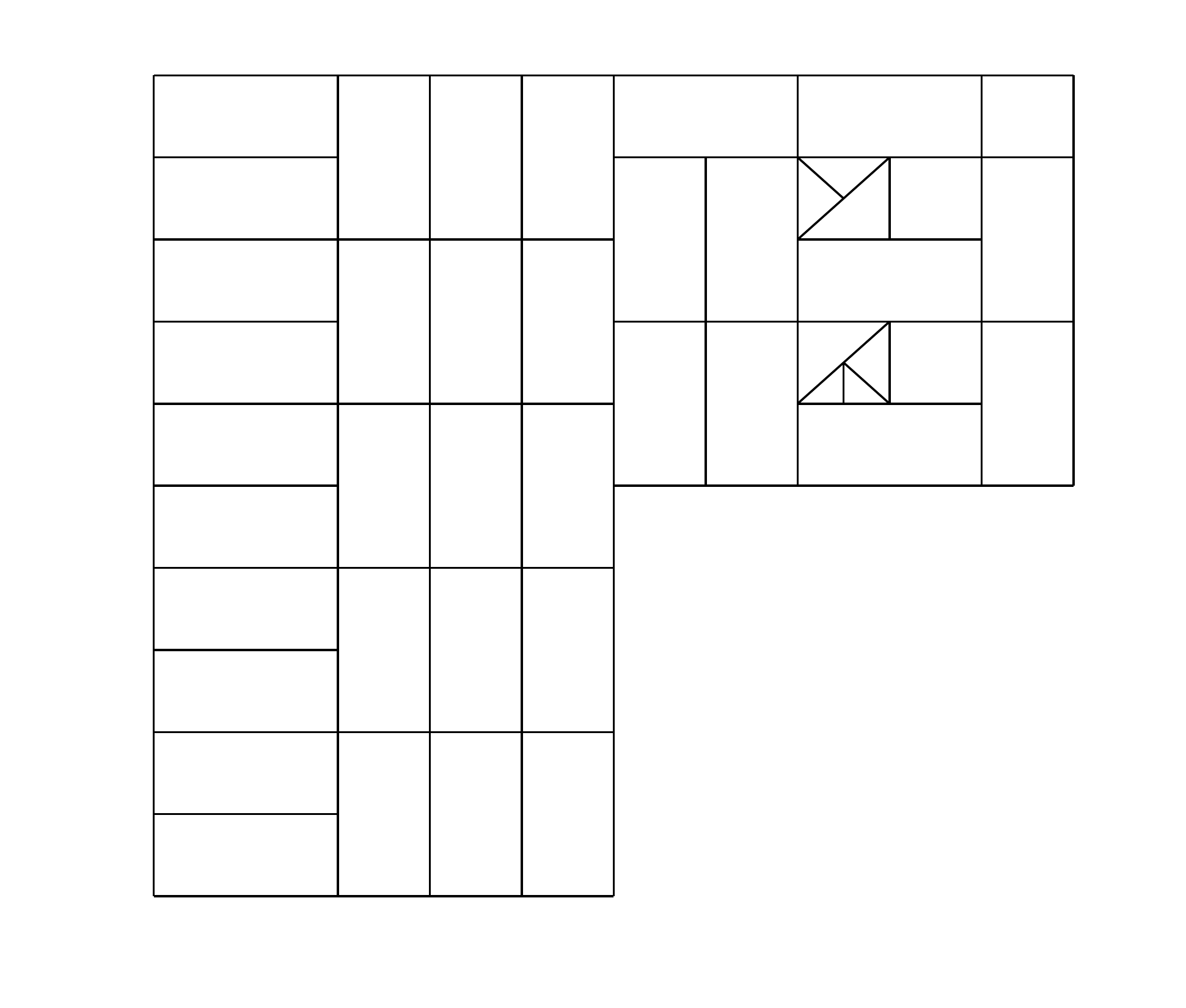} &
			\includegraphics[width=0.3\linewidth]{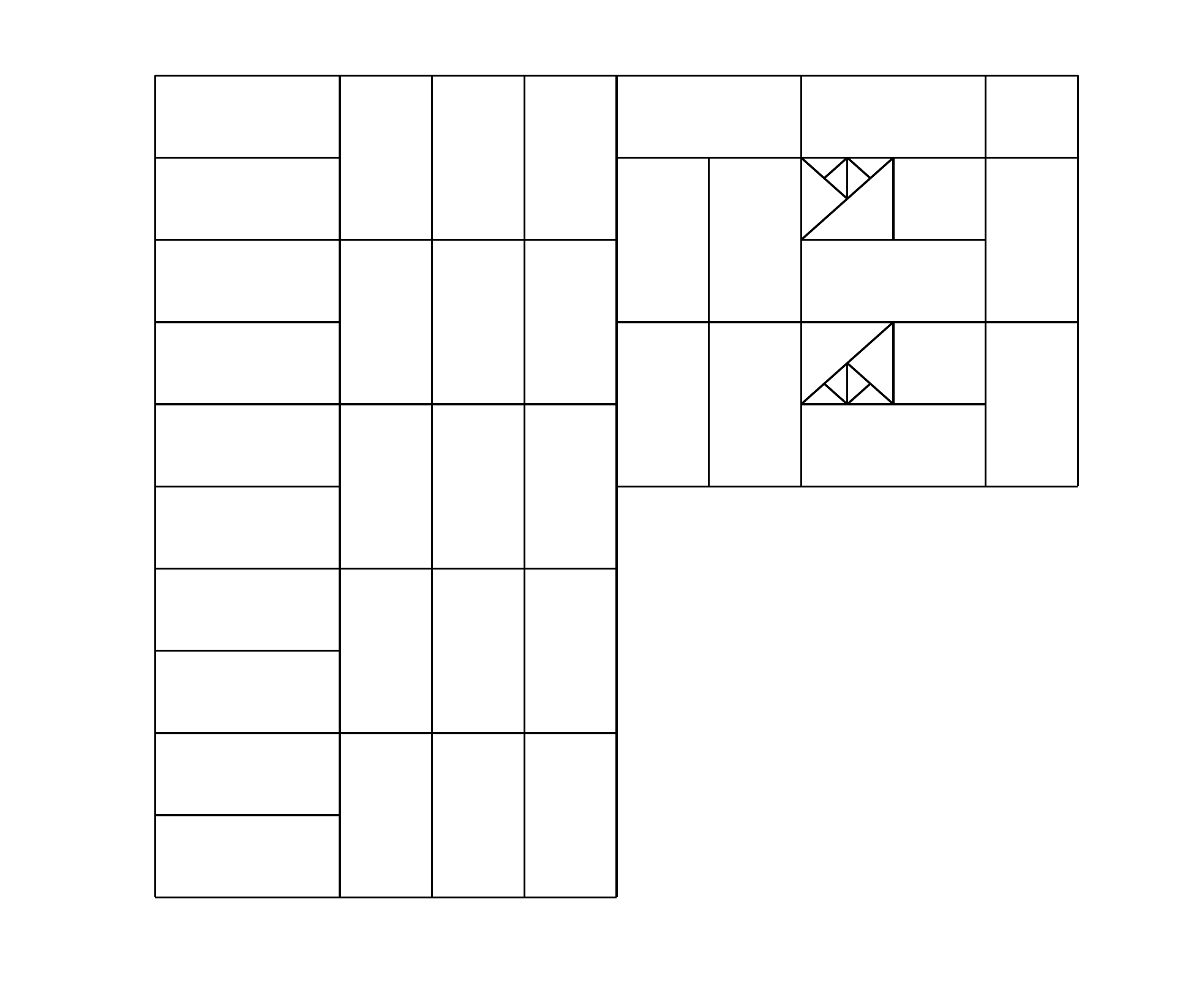} \\
			\includegraphics[width=0.3\linewidth]{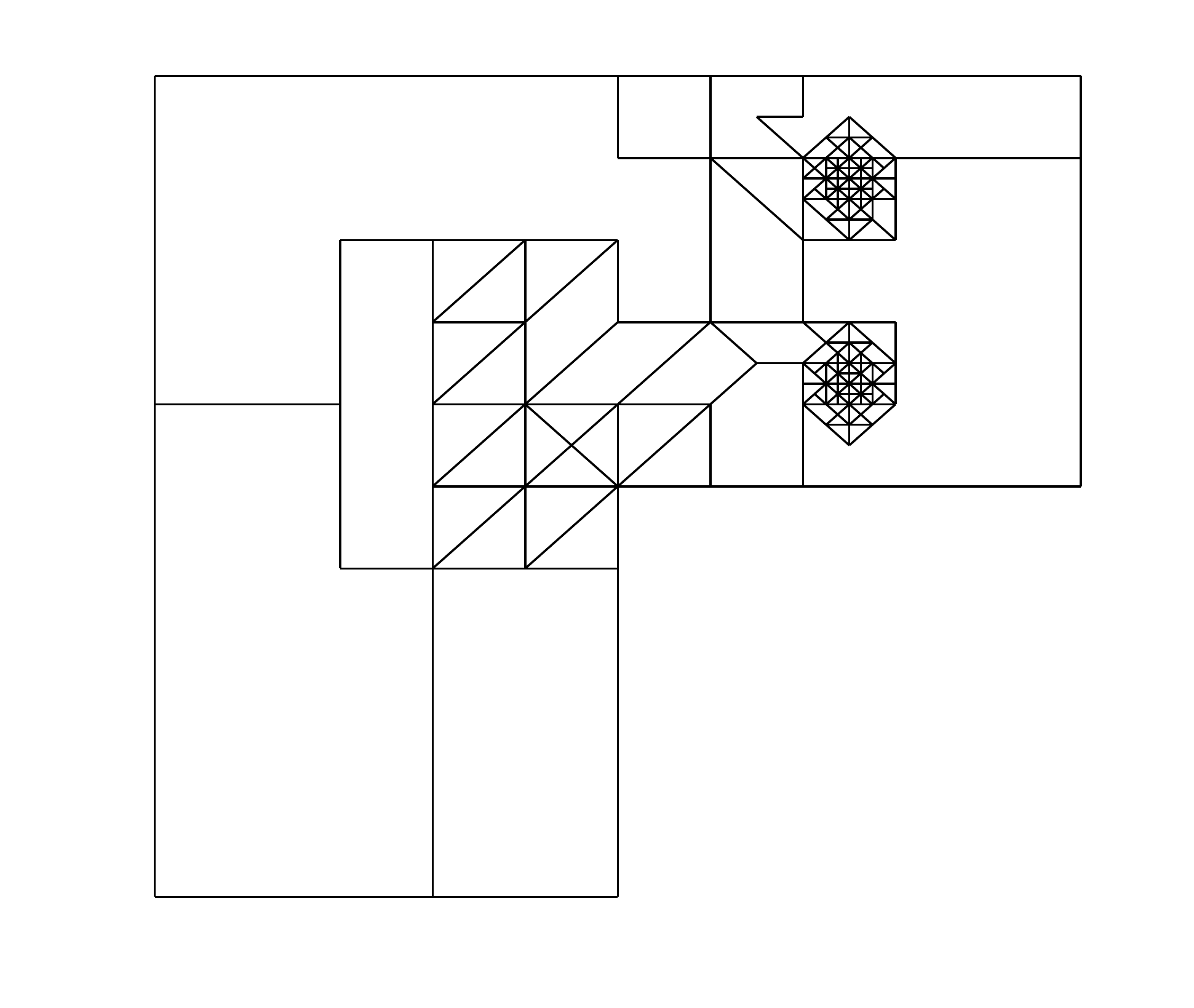} &
			\includegraphics[width=0.3\linewidth]{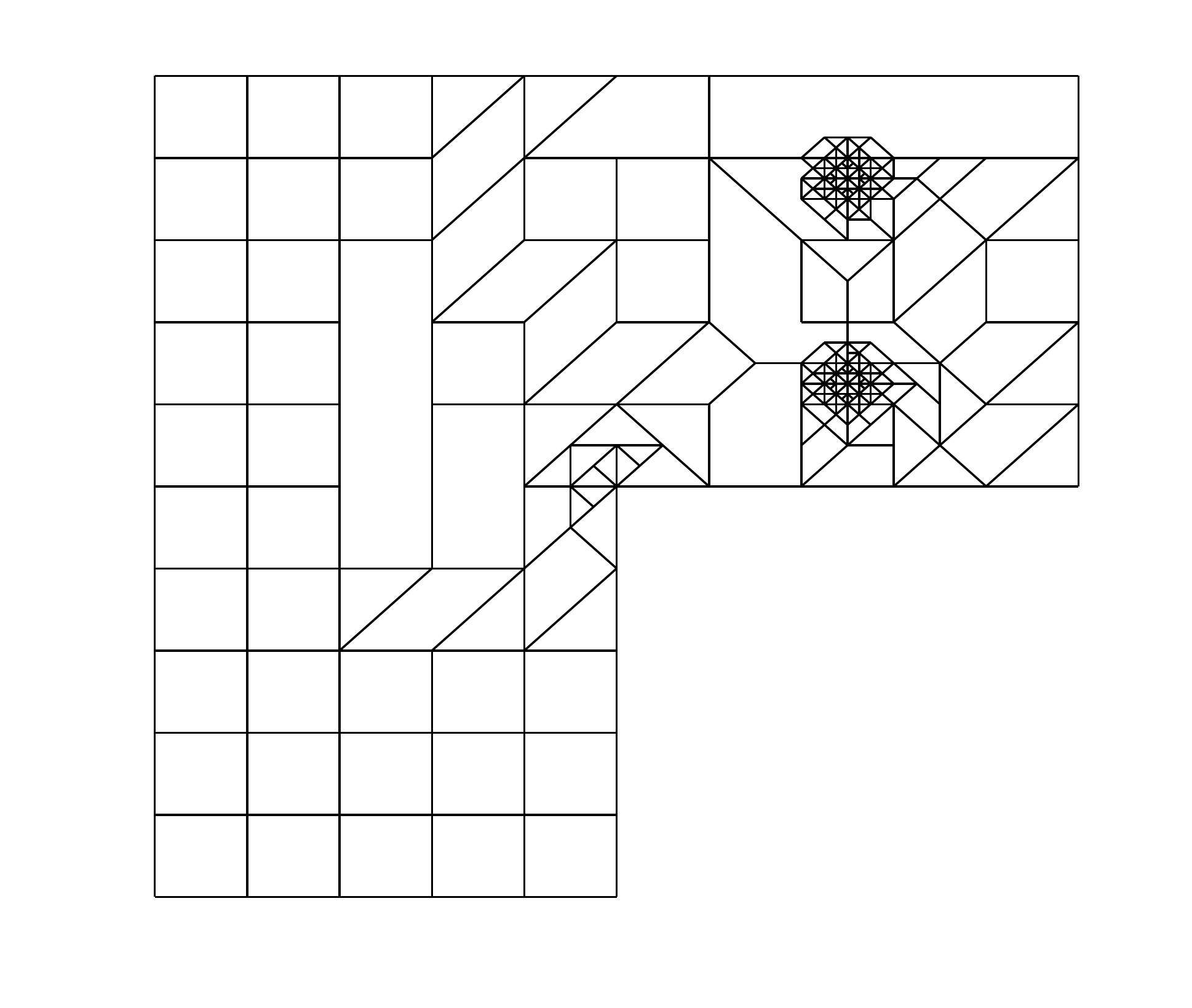} &
			\includegraphics[width=0.3\linewidth]{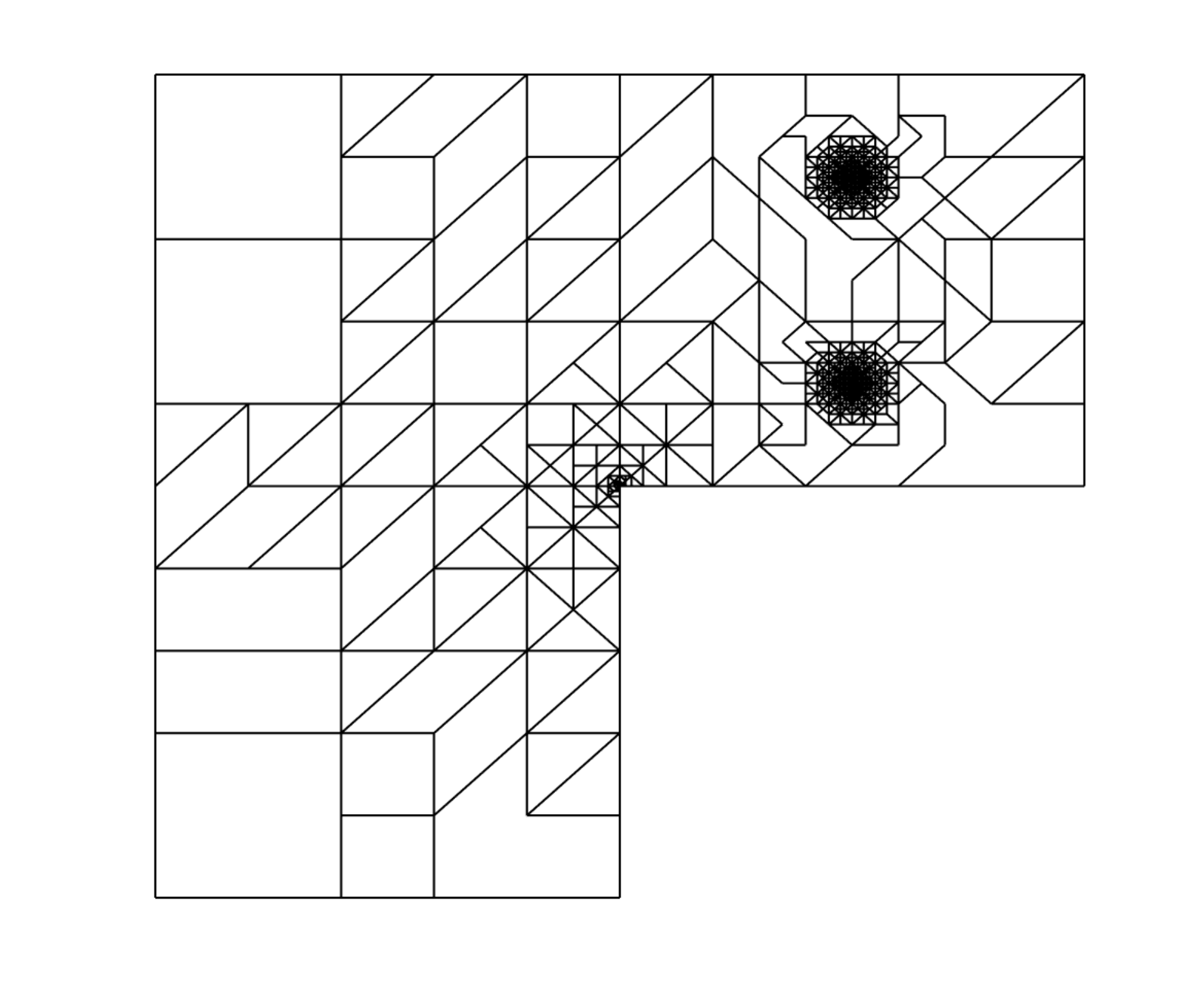}
		\end{tabular}
	\end{center}
	\caption{The sequence of meshes generated by the adaptive polytopic dG algorithm  with $p=3$.}\label{Ex:mesh}
\end{figure}

\begin{figure}[!tb]
	\begin{center}
		\begin{tabular}{cc}
			\includegraphics[width=0.4\linewidth]{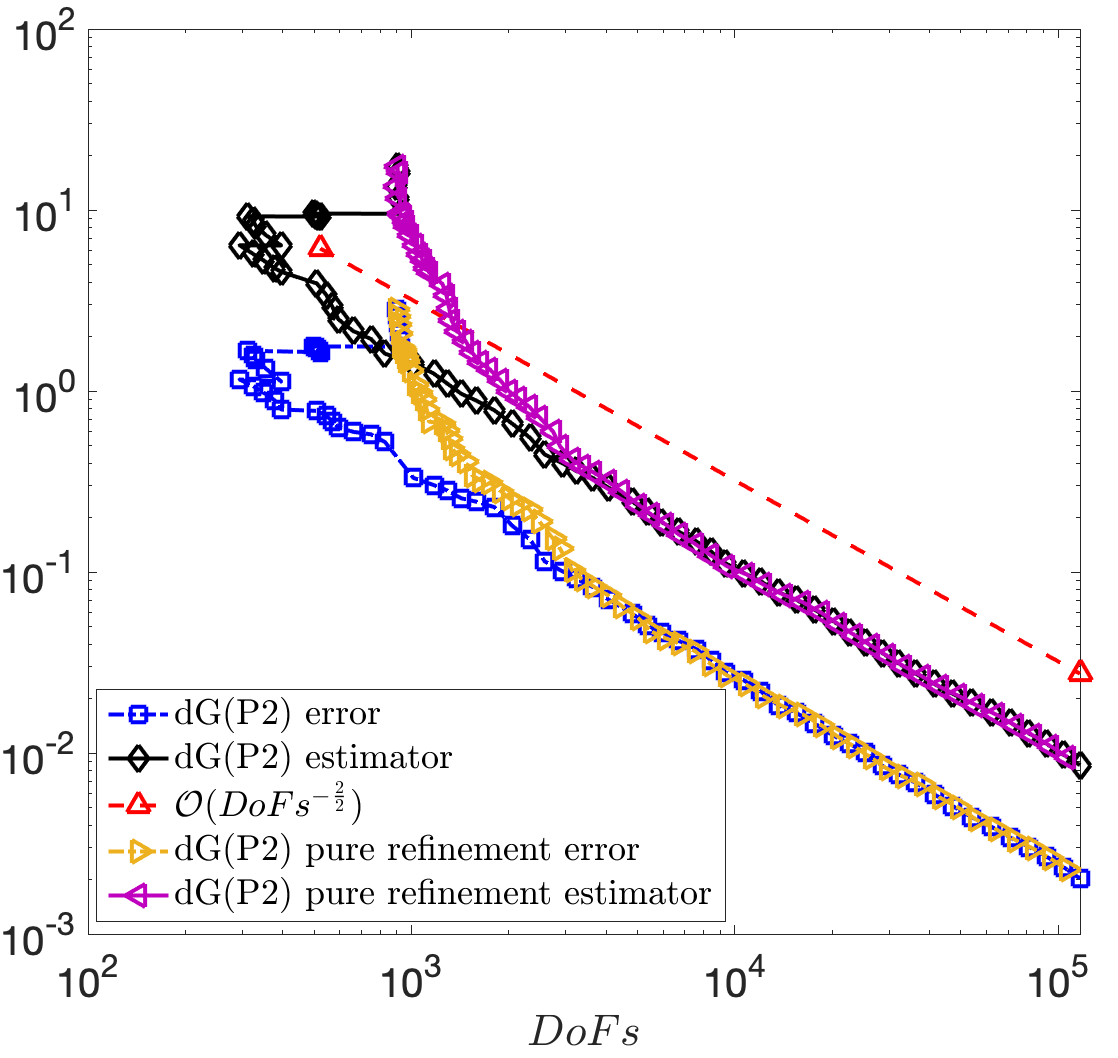} &
			\includegraphics[width=0.4\linewidth]{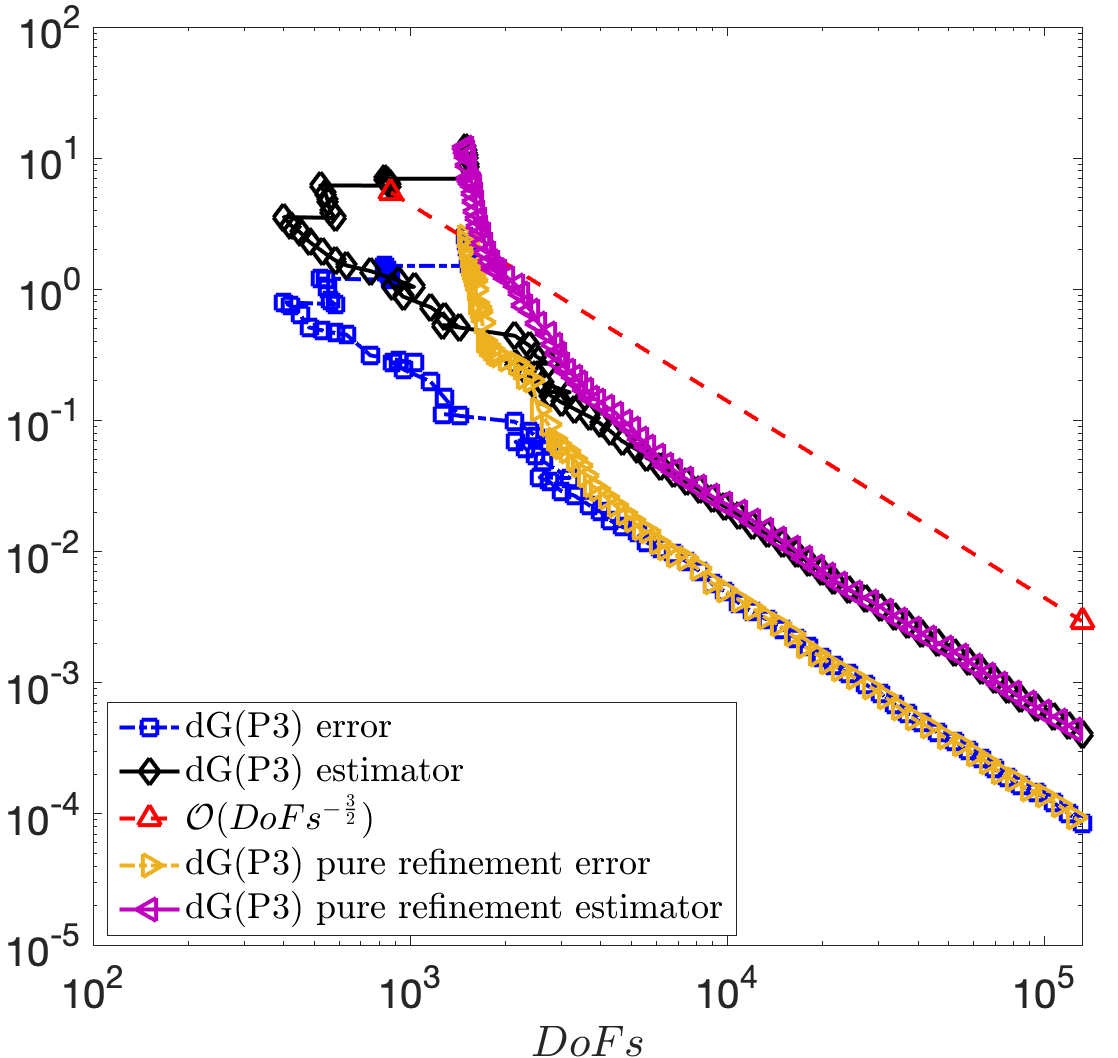}
		\end{tabular}
	\end{center}
	\caption{Example 2. Error and estimator of the new adaptive polytopic dG method and adaptive dG method without agglomeration for $p=2,3$.}\label{Ex:compare_PR_AR}
\end{figure}

\bibliographystyle{siam}
\bibliography{references}
\end{document}